\newcommand{\red}{\color{red}}
\newcommand{\black}{\color{black}}
\newcommand{\differential}{{\rm{d}}}
\newcommand{\dist}{{\rm{dist}}}
\newcommand{\complexi}{{\mathrm{i}}}
\renewcommand{\qedsymbol}{\hfill\ensuremath{\blacksquare}}
\crefname{hypothesis}{Hypothesis}{Hypotheses}
\title{Schr\"{o}dinger Bridge with Quadratic State Cost is Exactly Solvable\thanks{Submitted to the editors DATE.
\funding{This work was supported by NSF grants 2112755 and 2111688.}}}
\author{Alexis M.H. Teter\thanks{Department of Applied Mathematics, University of California Santa Cruz (\email{amteter@ucsc.edu}).}
\and Wenqing Wang\thanks{Department of Aerospace Engineering, Iowa State University (\email{wqwang@iastate.edu}).}
\and Abhishek Halder\thanks{Department of Aerospace Engineering, Iowa State University; Department of Applied Mathematics, University of California Santa Cruz (\email{ahalder@iastate.edu})}}
\begin{document}
\nolinenumbers
\maketitle

\begin{abstract}
Schr\"{o}dinger bridge is a diffusion process that steers a given distribution to another in a prescribed time while minimizing the effort to do so. It can be seen as the stochastic dynamical version of the optimal mass transport, and has growing applications in generative diffusion models and stochastic optimal control. {\black{We say a Schr\"{o}dinger bridge is ``exactly solvable'' if the associated uncontrolled Markov kernel is available in closed form, since then the bridge can be numerically computed using dynamic Sinkhorn recursion for arbitrary endpoint distributions with finite second moments.}} In this work, we propose a regularized variant of the Schr\"{o}dinger bridge with a quadratic state cost-to-go that incentivizes the optimal sample paths to stay close to a nominal level. 
  Unlike the conventional Schr\"{o}dinger bridge, the regularization induces a state-dependent rate of killing and creation of probability mass, and its solution requires determining the Markov kernel of a reaction-diffusion partial differential equation. We derive this Markov kernel in closed form, {\black{showing that the regularized Schr\"{o}dinger bridge is exactly solvable, even for non-Gaussian endpoints. This advances the state-of-the-art because closed form Markov kernel for the regularized Schr\"{o}dinger bridge is available in existing literature only for Gaussian endpoints}}. Our solution recovers the heat kernel in the vanishing regularization (i.e., diffusion without reaction) limit, thereby recovering the solution of the conventional Schr\"{o}dinger bridge {\black{as a special case}}. 
  We deduce properties of the new kernel and explain its connections with certain exactly solvable models in quantum mechanics.
\end{abstract}

\begin{keywords}
Schr\"{o}dinger bridge, Markov kernel, reaction-diffusion PDE, stochastic optimal control, Hermite polynomial.
\end{keywords}

\begin{MSCcodes}
60J60, 93E20, 82C31, 35K57
\end{MSCcodes}

\section{Introduction}\label{sec:Intro}
The \emph{Schr\"{o}dinger bridge (SB)} -- now well-known in  stochastic control \cite{follmer1988random,leonard2012schrodinger,chen2015optimal,chen2021stochastic,caluya2021wasserstein} and machine learning \cite{de2021diffusion,wang2021deep,liu2022deep,vargas2023bayesian,bunne2023schrodinger,shi2024diffusion} -- originated as a thought experiment by physicist Erwin Schr\"{o}dinger \cite{schrodinger1931umkehrung,schrodinger1932theorie}. Specifically, Schr\"{o}dinger considered a prior model where samples from a known initial probability distribution $\mu_0$ at time $t=t_0$ evolve under standard Wiener process $\bm{w}_t\in\mathbb{R}^{n}$ until $t=t_1$, i.e., an It\^{o} diffusion
\begin{align}
\differential\bm{x}_t = \sqrt{2}\:\differential\bm{w}_t, \quad \bm{x}_{0} := \bm{x}_t\left(t=t_0\right) \sim \mu_0, \quad t\in[t_0,t_1].
\label{SDEBrownian}
\end{align}
Supposing that $\bm{x}_1 := \bm{x}_t\left(t=t_1\right)\sim\mu_1$, where the observed probability distribution $\mu_1$ at $t=t_1$ does not match with that predicted by \eqref{SDEBrownian}, Schr\"{o}dinger sought to find the \emph{most-likely} mechanism to reconcile this \emph{distributional observation versus prior physics mismatch}.

One way to formalize the phrase ``most-likely'' is via the principle of large deviations \cite{dembo2009large} using Sanov's theorem \cite{sanov1957probability,csiszar1984sanov} which poses the SB as a \emph{constrained maximum likelihood problem} on the path space $\Omega := \mathcal{C}\left([t_0,t_1];\mathbb{R}^{n}\right)$ (the space of continuous curves in $\mathbb{R}^{n}$ parameterized by $t\in[t_0,t_1]$). Let $\mathcal{M}(\Omega)$ denote the collection of all probability measures on $\Omega$, and let $\Pi_{01}:=\{\mathbb{M}\in\mathcal{M}(\Omega)\mid \mathbb{M}$ has marginal $\mu_i$ at time $t=t_i \forall i\in\{0,1\}\}$. Then, the SB is identified with the law $\mathbb{P}\in\Pi_{01}$ that solves the stochastic calculus-of-variations problem:
\begin{align}
\underset{\mathbb{P}\in\Pi_{01}}{\arg\inf}\: D_{\rm{KL}}\left(\mathbb{P}\parallel\mathbb{W}\right),
\label{KLmin}
\end{align}
where $D_{\rm{KL}}\left(\cdot\parallel\mathbb{W}\right)$ is the relative entropy a.k.a. Kullback-Leibler divergence w.r.t. the Wiener measure $\mathbb{W}$. The existence-uniqueness for \eqref{KLmin}, and thus for the SB, are guaranteed assuming the endpoints $\mu_0,\mu_1$ have finite second moments; see e.g., \cite{fortet1940resolution,beurling1960automorphism,jamison1974reciprocal}.

A \emph{dynamical} reformulation of  ``most-likely'' that is mathematically equivalent \cite{wakolbinger1990schrodinger,dawson1990schrodinger,aebi1992large,leonard2011stochastic} to \eqref{KLmin}, results from considering the most parsimonious or minimum effort correction of prior physics, i.e., the It\^{o} SDE $\differential\bm{x}_t^{\bm{u}} = \bm{u}(t,\bm{x}_t^{\bm{u}}) \differential t + \sqrt{2}\differential\bm{w}_t$ where we interpret $\bm{u}$ as corrective drift (the prior physics is the case $\bm{u}\equiv\bm{0}$). Specifically, among all \emph{finite energy Markovian} $\bm{u}$ such that the process $\bm{x}_{t}^{\bm{u}}$ satisfies the endpoint constraints $\bm{x}_{0}^{\bm{u}}\sim\mu_0, \bm{x}_{1}^{\bm{u}}\sim\mu_1$, we seek the $\bm{u}$ that minimizes the corrective effort $\mathbb{E}_{\bm{x}_t^{\bm{u}}\sim\mu_t}\left[\int_{t_0}^{t_1}\frac{1}{2}\vert\bm{u}\vert^2\:{\rm{d}}t\right]$ where $\mathbb{E}_{\mu_t}\left[\cdot\right]:=\int_{\mathbb{R}^{n}}(\cdot)\differential\mu_t$ denotes expectation w.r.t. the probability distribution $\mu_t\equiv\mu_t(t,\bm{x}_t^{\bm{u}})$ $\forall t\in[t_0,t_1]$. This transcribes SB into a stochastic optimal control problem:
\begin{subequations}
\begin{align}
&\underset{(\mu_t,\bm{u})}{\arg\inf}\int_{\mathbb{R}^{n}}\int_{t_0}^{t_1}\frac{1}{2}\vert\bm{u}\vert^2\:{\rm{d}}t\:\differential\mu_t
\label{SBclassicalObj}\\
\text{subject to}\quad & \frac{\partial\mu_t}{\partial t} = -\nabla\cdot\left(\mu_t\bm{u}\right) + \Delta\mu_t,
\label{SBclassicalPDEConstr}\\
& \mu_t(t=t_0,\cdot) = \mu_0\:(\text{given}), \quad \mu_t(t=t_1,\cdot) = \mu_1\:(\text{given}),\label{SBclassicalEndpointConstr}
\end{align}
\label{SBclassical}
\end{subequations}
where $\nabla\cdot$ and $\Delta$ denote the Euclidean divergence and Laplacian operators w.r.t. $\bm{x}_t^{\bm{u}}$, respectively. Constraint \eqref{SBclassicalPDEConstr} is the controlled Fokker-Planck a.k.a. forward Kolmogorov PDE governing the evolution of the law of the process $\bm{x}_{t}^{\bm{u}}\sim\mu_t$. 

We refer to \eqref{SBclassical} as the \emph{classical SB} and note that when the Laplacian term in \eqref{SBclassicalPDEConstr} is zero, then \eqref{SBclassical} specializes to the \emph{optimal transport (OT)} problem \cite{benamou2000computational}. Thus, OT can be seen as the case of null prior physics with corrected physics being an ODE $\frac{\differential\bm{x}^{\bm{u}}}{\differential t} = \bm{u}(t,\bm{x}_t^{\bm{u}})$. That SB generalizes OT can also be understood from static optimization viewpoint by showing \cite{chen2016relation,gentil2017analogy} that SB is precisely entropy-regularized OT \cite{cuturi2013sinkhorn}.

In the minimizing tuple $(\mu_t^{\rm{opt}},\bm{u}^{\rm{opt}})$ for \eqref{SBclassical}, the $\bm{u}^{\rm{opt}}$ denotes the \emph{optimal control policy}. If the endpoints $\mu_0,\mu_1$ are absolutely continuous with respective PDFs $\rho_0,\rho_1$, then so is the optimal joint distribution $\mu_t^{\rm{opt}}$, i.e., $\mu_t^{\rm{opt}} = \rho_t^{\rm{opt}}(t,\bm{x}_t^{\bm{u}})\differential\bm{x}_t^{\bm{u}}$. 
Furthermore, the solution can be found in terms of the \emph{heat kernel} $\kappa_0$ in the sense that $\rho_t^{\rm{opt}}$ factorizes into the product of a $\kappa_{0}$-harmonic (i.e., a solution of the backward heat PDE) and a $\kappa_0$-coharmonic (i.e., a solution of the forward heat PDE).

\begin{figure}[htbp]
  \centering
  \includegraphics[width=0.65\textwidth]{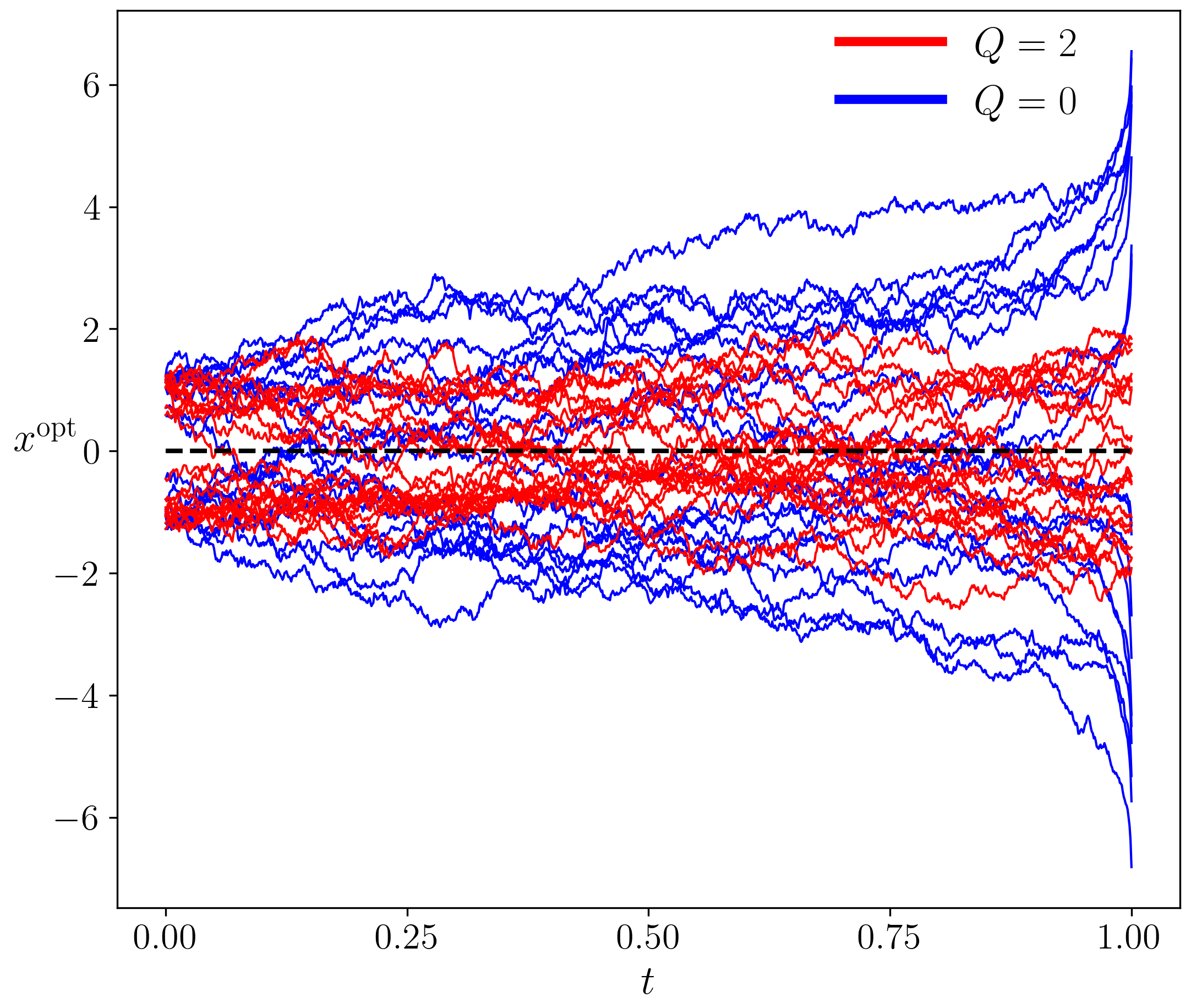}
  \caption{{\small{Sample paths for 1D SB with state cost $\frac{1}{2}Qx^2$ for fixed $Q\geq 0$, time horizon $[t_0,t_1]=[0,1]$, and endpoint PDFs $\rho_0 = \sum_{\mu_0\in\{-1,1\}}\frac{1}{2}\mathcal{N}(\mu_0,0.05^2)$, $\rho_1=\mathcal{N}(0,0.5^2)$.}}}
  \label{Fig:1dSBPsamplepathsWithAndWithoutq}
\end{figure}
This solution structure was discovered in \cite{schrodinger1931umkehrung,schrodinger1932theorie} and later found to hold in more general settings too, e.g., when prior physics is of the form\footnote{Here $\bm{f},\bm{g}$ are regular enough so that strong solutions exist for $\differential\bm{x}_t = \bm{f}(t,\bm{x}_t)\differential t + \sqrt{2}\bm{g}(t,\bm{x}_t)\differential\bm{w}_t$.}: $\differential\bm{x}_t = \bm{f}(t,\bm{x}_t)\differential t + \sqrt{2}\bm{g}(t,\bm{x}_t)\differential\bm{w}_t$ generalizing Schr\"{o}dinger's prior model $\bm{f}=\bm{0},\bm{g}=$ identity. 
In such generality, one needs to replace the heat kernel $\kappa_0$ with the corresponding $(\bm{f},\bm{g})$-dependent \emph{Markov kernels} associated with the forward and backward Kolmogorov PDEs, which are \emph{advection-diffusion PDEs}.

In this work, we investigate a different type of generalization of the classical SB \eqref{SBclassical}: we add a quadratic regularization term $q(t,\bm{x}_t^{\bm{u}}):=\frac{1}{2}\left(\bm{x}_t^{\bm{u}}\right)^{\top}\bm{Q}\bm{x}_t^{\bm{u}}$ with fixed $\bm{Q}\succeq\bm{0}$, in the integrand in \eqref{SBclassicalObj}. In other words, we modify the objective \eqref{SBclassicalObj} to $\int_{\mathbb{R}^{n}}\int_{t_0}^{t_1}\left(\frac{1}{2}\vert\bm{u}\vert^2+\frac{1}{2}\left(\bm{x}_t^{\bm{u}}\right)^{\top}\bm{Q}\bm{x}_t^{\bm{u}}\right)\:{\rm{d}}t\:\differential\mu_t$ but keep the constraints \eqref{SBclassicalPDEConstr}-\eqref{SBclassicalEndpointConstr} unchanged. We propose such regularization as a mechanism to bias the corrected process $\bm{x}_t^{\bm{u}}$ toward some desired or nominal constant (w.l.o.g. zero) vector for all $t\in[t_0,t_1]$ in addition to fulfilling the endpoint statistics constraints; see Fig. \ref{Fig:1dSBPsamplepathsWithAndWithoutq}. Details on simulating the SB for $Q>0$ case as in Fig. \ref{Fig:1dSBPsamplepathsWithAndWithoutq} will be discussed toward the end of Sec. \ref{sec:Discussions}. 

From a control-theoretic viewpoint, the proposed quadratic regularization is natural since penalizing state excursions away from nominal can be a soft way to promote safety, for example. Our formulation is in similar spirit as classical linear quadratic optimal control \cite{anderson2007optimal} where the stage cost indeed comprises of two summands: a positive semi-definite state cost-to-go (here $\frac{1}{2}\left(\bm{x}_t^{\bm{u}}\right)^{\top}\bm{Q}\bm{x}_t^{\bm{u}}$) and a positive definite control cost-to-go (here $\frac{1}{2}\vert \bm{u}\vert^2$). The matrix $\bm{Q}\succeq 0$ expresses the relative modeling importance of one summand over another.

    
\noindent\textbf{Contributions.}
We show that as in classical SB, $\rho_t^{\rm{opt}}$ in our case still factorizes into the product of a $\kappa$-harmonic and a $\kappa$-coharmonic where $\kappa$ is the Markov kernel associated with a \emph{reaction-diffusion PDE} where the quadratic regularization term plays the role of (state dependent) reaction rate. We derive this new kernel $\kappa$ in closed form in terms of exponential and hyperbolic functions. We explain how this result is a nontrivial generalization of the heat kernel $\kappa_{0}$. We deduce properties of this new kernel $\kappa$, and point out how special cases of our new kernel recover certain known kernels in quantum mechanics. We explain how our results enable numerically solving the generalized SB via dynamic Sinkhorn recursion.

\noindent\textbf{Related works.} Beyond its relevance for learning and control in general as outlined earlier, one specific area where SB is finding recent applications is in diffusion models for generative AI \cite{de2021diffusion,chen2021likelihood,zhou2023denoising,peluchetti2023diffusion,shi2024diffusion,gushchin2024entropic,gushchin2024building,lee2024disco}. This is because \eqref{SBclassical} allows for generating samples from data distribution in not just finite, but in a \emph{given} finite time. This circumvents the need to run a forward-time SDE for sufficiently long and then to reverse that SDE over the same time horizon \cite{song2020score}.  

Classical SB has been generalized in multiple directions, e.g., with more general drifts and diffusions \cite{liu2022deep,caluya2021wasserstein,nodozi2022schrodinger,liu2023i2sb,teter2023contraction,nodozi2023neural,nodozi2023physics,vargas2024transport} and with additional bounded domain constraints \cite{caluya2021reflected,deng2024reflected}. However, works on SB with state cost-to-go are limited \cite{wakolbinger1990schrodinger,dawson1990schrodinger}, {\black{\cite[Ch. 2.6, 2.7]{nagasawa2012schrodinger}}}. In particular, investigating SB with quadratic state cost-to-go and deducing closed form kernels for the same, as done here, are novel.

{\black{We emphasize here that by finding the Markov kernel $\kappa$ in closed form, our results enable the numerical solution of SB with quadratic state cost for \emph{generic endpoint distributions $\mu_0,\mu_1$ with finite second moments}. The solution of SB with quadratic state cost in the special case of Gaussian $\mu_0,\mu_1$ has appeared before in \cite{chen2015optimalACC,8249875}. Different from these prior works, the closed form Markov kernel derived here makes no parametric assumptions on $\mu_0,\mu_1$.}}

\noindent\textbf{Organization.} In Sec. \ref{sec:NotationsAndPrelim}, we set up basic notations and background ideas that will be used in the following development. Sec. \ref{sec:SBwithStateCost} spells out the SB problem formulation with generic state cost-to-go, explains the solution structure and dynamic Sinkhorn recursions to compute the solution as well the meaning of ``exactly solvable''. {\black{Readers already familiar with the SB ideas, could skip to Sec. \ref{subsec:ExactlySolvable} where new developments commence.}} Our main results appear in Sec. \ref{sec:MainResults}. Proofs for all the statements in Sec. \ref{sec:SBwithStateCost} and Sec. \ref{sec:MainResults} are deferred to Appendix \ref{AppSec:Proofs}. In Sec. \ref{sec:Discussions}, we discuss the derived results, their properties, and provide numerical results to illustrate the same. We conclude with some future directions.


\section{Notations and preliminaries}\label{sec:NotationsAndPrelim}
We use $\complexi$, $\overline{\mathbb{R}}$, $\vert\cdot\vert$, $\langle\cdot,\cdot\rangle$, {\black{$\mathcal{T}$}}, $\oslash$, $\det$, $\bm{0}$, $\bm{I}$ to denote the complex number $\sqrt{-1}$, the set of extended reals, Euclidean magnitude, Euclidean inner product, {\black{tangent bundle}}, elementwise (Hadamard) division, determinant, zero matrix or vector of appropriate size, identity matrix of suitable size, respectively. 
The symbols $\nabla$ and $\Delta$ denote the Euclidean gradient and Laplacian operators, respectively. Sometimes we put subscripts to these operators to disambiguate w.r.t. which vectors these differential operators are taken. The symbols $\exp$ and $e$ are used interchangeably to denote the exponential. The subscripted index vector notation $\bm{x}_{\left[i_{1}:i_{r}\right]}$ stands for the appropriate length $r$ sub-vector of the vector $\bm{x}$. Let $\mathbb{N}:=\{1,2,\hdots\}$, $\mathbb{N}_{0}:=\mathbb{N}\cup\{0\}$, and the $n$-fold tensor product $\mathbb{N}_{0}^{n}:=\underbrace{\mathbb{N}_{0}\times\mathbb{N}_{0}\times\hdots\mathbb{N}_{0}}_{n\;\text{times}}$.

We use inequalities $\succ$ and $\succeq$ in the sense of L\"{o}wner partial order. We will need hyperbolic sine, cosine, tangent, cotangent, secant and co-secant functions, denoted respectively as $\sinh(\cdot)$, $\cosh(\cdot)$, $\tanh(\cdot)$, $\coth(\cdot)$, ${\rm{sech}}(\cdot)$, and ${\rm{csch}}(\cdot)$. For matricial arguments, these functions are understood elementwise. The symbol $\bm{1}_{\{\cdot\}}$ is used to denote an indicator function for the condition or set in the subscript. We use ${\rm{eig}}\left(\cdot\right)$ to denote the eigenvalue operator. The normal PDF with mean $\bm{\mu}\in\mathbb{R}^{n}$ and covariance $\bm{\Sigma}\succ\bm{0}$, is denoted as $\mathcal{N}(\bm{\mu},\bm{\Sigma}):=(2\pi)^{-n/2}\det(\bm{\Sigma})^{-1/2}\exp\left(-\frac{1}{2}(\bm{x}-\bm{\mu})^{\top}\bm{\Sigma}^{-1}(\bm{x}-\bm{\mu})\right)$.

The (scaled) \emph{heat kernel} in $\mathbb{R}^{n}$ associated with the diffusion process $\differential\bm{x}_t = \sqrt{2}\differential\bm{w}_t$ is denoted as $\kappa_0$, i.e.,
\begin{align}
\kappa_0(s, \bm{x}, t, \boldsymbol{y}) = (4\pi(t-s))^{-n/2}\exp\!\left(\!-\dfrac{\vert \bm{x}-\bm{y}\vert^2}{4(t-s)}\!\right),\; 0\leq s<t<\infty,\;\forall\bm{x},\bm{y}\in\mathbb{R}^{n}.
\label{defHeatKernel}
\end{align}
Given two probability measures $\mathbb{P},\mathbb{Q}$ on some measure space, the \emph{relative entropy} a.k.a. \emph{Kullback-Leibler divergence} $D_{\rm{KL}}\left(\mathbb{P}\parallel\mathbb{Q}\right) := \mathbb{E}_{\mathbb{P}}\left[\log\dfrac{\differential\mathbb{P}}{\differential\mathbb{Q}}\right]$ where $\dfrac{\differential\mathbb{P}}{\differential\mathbb{Q}}$ denotes the Radon-Nikodym derivative.

We use the \emph{physicist's Hermite polynomials} \cite[Ch. 22]{AbraSteg72} of degree $n\in\mathbb{N}_{0}$, given by
\begin{align}
    H_{n}(x) := (-1)^n e^{x^2} \dfrac{\differential^n}{\differential x^n} \left( e^{-x^2} \right),    \label{defPhysicistHermitPoly}
\end{align}
which are orthogonal in the sense
\begin{align}
    \int_{-\infty}^{\infty} H_{m}(x) H_n(x) e^{-x^2}\differential x = \begin{cases} 0 &\textrm{if} \quad m \neq n,\\ 
    \sqrt{\pi} 2^n n! &\textrm{otherwise.}
    \end{cases}
    \label{OrthogonalityHermitePoly}
\end{align}
The following Lemma will be useful in our development.
\begin{lemma}[\textbf{A series sum for exponential of negative quadratic}]\label{LemmaExponentialOfQuadAsSeries}
For $\alpha,\beta, t_0, x$ fixed, and $0\leq t_0 < t < \infty$,
\begin{align}
    \sum_{j=0}^{\infty} \frac{(\alpha\left(t-t_{0}\right))^j}{j!\beta^{j}} \dfrac{\differential^{j}}{\differential x^{j}}e^{-(\beta x)^2} = e^{-\left(\beta x+ \alpha\left(t-t_{0}\right)\right)^2}.
\label{TaylorExpansion} 
\end{align}
\end{lemma}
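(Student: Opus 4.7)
The plan is to recognize the identity as a Taylor expansion in disguise. Let $f(y) := e^{-y^2}$, which is an entire function of $y$, so its Taylor series about any point converges for every shift. By the chain rule, $\dfrac{\differential^{j}}{\differential x^{j}}e^{-(\beta x)^2} = \beta^{j}f^{(j)}(\beta x)$. Substituting this on the left-hand side of \eqref{TaylorExpansion} cancels the $\beta^{j}$ in the denominator, producing
\begin{equation*}
\sum_{j=0}^{\infty} \frac{\left(\alpha\left(t-t_{0}\right)\right)^{j}}{j!}\,f^{(j)}(\beta x),
\end{equation*}
which is precisely the Taylor expansion of $f$ about the point $\beta x$ evaluated at the shifted argument $\beta x + \alpha(t-t_{0})$. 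Since $f$ is entire, this sum equals $f(\beta x + \alpha(t-t_{0})) = e^{-(\beta x + \alpha(t-t_{0}))^{2}}$, which is the right-hand side of \eqref{TaylorExpansion}.

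As an alternative derivation that anchors the lemma in the Hermite framework already introduced in Sec. \ref{sec:NotationsAndPrelim} and which will reappear in Sec. \ref{sec:MainResults}, I would rewrite the derivatives via Rodrigues' formula \eqref{defPhysicistHermitPoly}: $\dfrac{\differential^{j}}{\differential x^{j}}e^{-(\beta x)^2} = (-1)^{j}\beta^{j} H_{j}(\beta x) e^{-(\beta x)^{2}}$. Factoring out $e^{-(\beta x)^{2}}$ reduces the left-hand side to
\begin{equation*}
e^{-(\beta x)^{2}}\sum_{j=0}^{\infty} \frac{\left(-\alpha(t-t_{0})\right)^{j}}{j!}\,H_{j}(\beta x),
\end{equation*}
at which point I would invoke the standard generating function $\sum_{j=0}^{\infty}\frac{s^{j}}{j!}H_{j}(y) = e^{2ys - s^{2}}$ with $s = -\alpha(t-t_{0})$ and $y = \beta x$. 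This yields $e^{-2\beta x\,\alpha(t-t_{0}) - \alpha^{2}(t-t_{0})^{2}}$, and combining with the prefactor $e^{-(\beta x)^{2}}$ completes the square to give $e^{-(\beta x + \alpha(t-t_{0}))^{2}}$.

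Either route is essentially a one-line calculation once the correct viewpoint is taken; the only item that actually requires care is the interchange of summation and evaluation, i.e., convergence of the series. This is where I would be mildly careful: because $f(y)=e^{-y^{2}}$ is entire of order $2$, its Taylor series about any $y_{0}\in\mathbb{R}$ converges absolutely on all of $\mathbb{C}$, so the formal manipulations above are legitimate for every fixed $\alpha,\beta,t_{0},x$ and every $t\geq t_{0}$. I do not anticipate any genuine obstacle; the value of the lemma lies not in its proof but in its later application, where it will be used to collapse an infinite series arising from the spectral representation of the reaction-diffusion Markov kernel $\kappa$ into a closed-form Gaussian-type factor.
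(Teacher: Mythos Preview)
Your proof is correct and takes essentially the same approach as the paper, which also proves the identity by recognizing it as a Taylor expansion (the paper expands in $t$ about $t_{0}$ and identifies the coefficients as $x$-derivatives via the chain rule, which is equivalent to your expansion of $f(y)=e^{-y^2}$ about $\beta x$). Your alternative derivation via the Hermite generating function is a nice bonus that ties the lemma to its later use, but it goes beyond what the paper does.
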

\begin{proof}
The result follows from Taylor expansion of $e^{-\left(\beta x+ \alpha\left(t-t_{0}\right)\right)^2}$ in variable $t$ about $t_{0}$, and then identifying the coefficients as suitable order derivatives w.r.t. $x$.\qedsymbol
\end{proof}

For any $n\in\mathbb{N}$, the set of \emph{symplectic matrices}
\begin{align}
{\rm{Sp}}\left(2n,\mathbb{R}\right) := \bigg\{\bm{S}\in\mathbb{R}^{2n\times2n}\mid \bm{S}^{\top}\bm{JS}=\bm{J}\;\text{where}\;\bm{J}:=\begin{pmatrix}
\bm{0} & \bm{I}\\
-\bm{I} & \bm{0}
\end{pmatrix}\bigg\}
    \label{defSymplecticGroup}
\end{align}
is a connected Lie group that is a subgroup of ${\rm{SL}}(2n,\mathbb{R})$, the set of volume and orientation preserving linear maps in $\mathbb{R}^{2n}$. We denote the set of $n\times n$ symmetric positive definite matrices as ${\rm{Sym}}_{++}(n)$, i.e., $\bm{A}\in{\rm{Sym}}_{++}(n)$ means $\bm{A}\succ\bm{0}$. Let ${\rm{Sp}}_{++}\left(2n,\mathbb{R}\right):={\rm{Sp}}\left(2n,\mathbb{R}\right)\cap {\rm{Sym}}_{++}(2n)$.

We will use the following result, which is sometimes referred to as the ``central identity'' of quantum field theory (QFT) \cite[Appendix A]{zee2010quantum}.
\begin{lemma}[\textbf{Central identity of QFT}]\label{Lemma:CentralIdentityQFT}\cite[p. 2]{zinn2021quantum}\cite[p. 15]{zee2010quantum}
For a suitably smooth function $f:\mathbb{R}^{n}\mapsto\mathbb{R}$, and matrix $\bm{A}\in{\rm{Sym}}_{++}(n)$, 
\begin{align}
\displaystyle\int_{\mathbb{R}^n}\!\!\exp\!\left(\!-\frac{1}{2}\bm{x}^{\top}\bm{A}\bm{x}\!\right)\! f(\bm{x})\differential\bm{x}=\sqrt{ \frac{(2\pi)^{n}}{\det(\bm{A})}} \exp\left(\frac{1}{2}\nabla_{\bm{x}}^{\top}\bm{A}^{-1}\nabla_{\bm{x}}\right) f(\bm{x})\bigg\vert_{\bm{x}=\bm{0}},
\label{CentralIdentityQFT}
\end{align}
{\black{where the exponential of a differential operator is understood as a power series.}} In particular, for $f(\bm{x})=\exp\langle \bm{b},\bm{x}\rangle$, $\bm{b}\in\mathbb{R}^{n}$, we have $\int_{\mathbb{R}^n}\!\exp\!\left(\!-\frac{1}{2}\bm{x}^{\top}\bm{A}\bm{x}+\langle \bm{b},\bm{x}\rangle\!\right)\! \differential\bm{x}=\sqrt{ \frac{(2\pi)^{n}}{\det(\bm{A})}} \exp\left(\frac{1}{2} \bm{b}^{\top}\bm{A}^{-1} \bm{b}\right)$.
\end{lemma}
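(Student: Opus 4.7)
The plan is to first establish the special-case identity for $f(\bm{x})=\exp\langle\bm{b},\bm{x}\rangle$ (which the lemma itself highlights), and then bootstrap from this special case to general suitably smooth $f$ by exploiting the fact that both sides of \eqref{CentralIdentityQFT} are linear in $f$ and that exponentials are spectrally rich.

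First I would handle $f(\bm{x})=\exp\langle\bm{b},\bm{x}\rangle$ with $\bm{b}\in\mathbb{R}^{n}$. For this, I would complete the square:
\begin{align*}
-\tfrac{1}{2}\bm{x}^{\top}\bm{A}\bm{x}+\langle\bm{b},\bm{x}\rangle
 = -\tfrac{1}{2}(\bm{x}-\bm{A}^{-1}\bm{b})^{\top}\bm{A}(\bm{x}-\bm{A}^{-1}\bm{b}) + \tfrac{1}{2}\bm{b}^{\top}\bm{A}^{-1}\bm{b},
\end{align*}
translate the integration variable, and use the standard multivariate Gaussian normalization $\int_{\mathbb{R}^{n}}\exp(-\tfrac{1}{2}\bm{y}^{\top}\bm{A}\bm{y})\differential\bm{y}=\sqrt{(2\pi)^{n}/\det(\bm{A})}$ (valid because $\bm{A}\in\mathrm{Sym}_{++}(n)$). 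This immediately yields the boxed formula at the end of the Lemma's statement. In parallel, I would check the right-hand side of \eqref{CentralIdentityQFT} on this $f$: since $\nabla_{\bm{x}}\exp\langle\bm{b},\bm{x}\rangle=\bm{b}\exp\langle\bm{b},\bm{x}\rangle$, iterating gives $(\nabla_{\bm{x}}^{\top}\bm{A}^{-1}\nabla_{\bm{x}})^{k}\exp\langle\bm{b},\bm{x}\rangle=(\bm{b}^{\top}\bm{A}^{-1}\bm{b})^{k}\exp\langle\bm{b},\bm{x}\rangle$. Evaluating the resulting power series at $\bm{x}=\bm{0}$ produces $\exp(\tfrac{1}{2}\bm{b}^{\top}\bm{A}^{-1}\bm{b})$, matching the left-hand side. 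This verifies \eqref{CentralIdentityQFT} whenever $f$ is a pure exponential.

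Next I would lift this to generic $f$ via Fourier inversion. Assuming $f$ lies in a class for which $f(\bm{x})=\int_{\mathbb{R}^{n}}\hat{f}(\bm{k})\,e^{\complexi\langle\bm{k},\bm{x}\rangle}\differential\bm{k}$ holds (e.g., Schwartz class, so that all manipulations are absolutely convergent), both sides of \eqref{CentralIdentityQFT} are linear in $f$. On the left, Fubini lets me swap integrals, reducing to the Gaussian integral evaluated at the complex source $\bm{b}=\complexi\bm{k}$ (which is legitimate because the Gaussian decays fast enough to give entire analytic dependence in $\bm{b}$). On the right, I would justify pulling $\exp(\tfrac{1}{2}\nabla_{\bm{x}}^{\top}\bm{A}^{-1}\nabla_{\bm{x}})$ inside the Fourier integral term-by-term using dominated convergence, so that the operator acts on each $e^{\complexi\langle\bm{k},\bm{x}\rangle}$ and produces $\exp(-\tfrac{1}{2}\bm{k}^{\top}\bm{A}^{-1}\bm{k})$ after setting $\bm{x}=\bm{0}$. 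Matching the two Fourier integrals pointwise in $\bm{k}$ then gives \eqref{CentralIdentityQFT} in general. An alternative equivalent route is to Taylor expand $f$ at the origin and to identify Gaussian moments with the combinatorics of the differential operator's power series via Wick's theorem; both routes terminate in the same identity.

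The main obstacle, as in most formal identities of this type in QFT, is the convergence of the differential-operator power series $\exp(\tfrac{1}{2}\nabla_{\bm{x}}^{\top}\bm{A}^{-1}\nabla_{\bm{x}})f$ and the legitimacy of interchanging it with the Fourier integral. I would address this by restricting attention to $f\in\mathcal{S}(\mathbb{R}^{n})$ (or more generally to $f$ with suitable exponential bounds compatible with the spectrum of $\bm{A}^{-1}$), so that term-by-term application of the operator yields an absolutely convergent series, and Fubini/dominated convergence supplies all required exchanges. Once the identity holds on this dense class, density arguments extend it to the classes of $f$ actually needed downstream in the paper.
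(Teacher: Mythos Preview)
Your argument is correct and follows the standard route: completing the square establishes the special case $f(\bm{x})=\exp\langle\bm{b},\bm{x}\rangle$, and Fourier superposition (or equivalently Taylor expansion plus Wick's theorem) lifts it to general $f$. Note, however, that the paper does not supply its own proof of this Lemma; it is stated as a cited result from \cite{zinn2021quantum,zee2010quantum}, so there is no in-paper argument to compare against. Your derivation is exactly the one found in those references and is fully adequate for the uses downstream in the paper, which in fact only ever invoke the special case (Gaussian integrals with linear source terms).
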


For diagonalizable $\bm{X}\in\mathbb{R}^{n\times n}$ with $\bm{X}=\bm{VDV}^{-1}$, and function $g(\cdot)$ well-defined on the spectrum of $\bm{X}$, the matrix function 
\begin{align}
g\left(\bm{X}\right)=\bm{V}g(\bm{D})\bm{V}^{-1},
\label{MatrixFunction}    
\end{align}
see e.g., \cite[Ch. 5]{gantmakher2000theory}, \cite[Ch. 1]{higham2008functions}.


\section{Schr\"{o}dinger bridge with state cost-to-go}\label{sec:SBwithStateCost}
We start with the problem formulation and its solution structure for generic state cost-to-go $q:[t_0,t_1]\times\mathbb{R}^{n}\mapsto\overline{\mathbb{R}}$ before delving into the specifics for the quadratic case (Sec. \ref{sec:MainResults}). 


\subsection{Schr\"{o}dinger factors and reaction-diffusion PDEs}
Hereafter, we assume that the endpoint distributions $\mu_0,\mu_1$ are absolutely continuous with respective PDFs $\rho_0,\rho_1$, and generalize the classical SB \eqref{SBclassical} with the addition of state cost-to-go $q:[t_0,t_1]\times\mathbb{R}^{n}\mapsto\overline{\mathbb{R}}$, as
\begin{subequations}
\begin{align}
&\underset{\left(\rho_t,\bm{u}\right)}{\arg\inf}\quad\displaystyle\int_{\mathbb{R}^{n}}\int_{t_0}^{t_1}\left(\dfrac{1}{2}\vert \bm{u}\vert^{2} + q\left(t,\bm{x}_t^{\bm{u}}\right)\right)\differential t\:\rho_t(t,\bm{x}_t^{u})\differential\bm{x}_t^{u} \label{LambertSBPobj_General}\\  
\text{subject to}\quad &\dfrac{\partial\rho_t}{\partial t} = - \nabla\cdot \left(\rho_t\bm{u}\right) + \Delta\rho_t, \label{LambertSBPdynconstr}\\
& \rho_t(t=t_0,\cdot) = \rho_0\:\text{(given)}, \quad \rho_t(t=t_1,\cdot) = \rho_{1}\:\text{(given)}\label{LambertSBPterminalconstr}.
\end{align}
\label{LambertSBP}
\end{subequations}
We first note that, with mild assumptions, this problem is well-posed (proof in Appendix \ref{Proof_of_ThmLSBPExistenceUniqueness}). 

\begin{theorem}[\textbf{Existence-Uniqueness of solution for SB with state cost-to-go}]\label{ThmLSBPExistenceUniqueness}
If $\rho_0,\rho_1$ have finite second raw moments and $q$ is measurable, then the minimizing tuple $\left(\rho^{\rm{opt}}_t,\bm{u}^{\rm{opt}}\right)$ for problem \eqref{LambertSBP} exists and is unique.
\end{theorem}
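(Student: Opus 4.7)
My plan is to recast \eqref{LambertSBP} as a classical Schr\"{o}dinger bridge problem with respect to a \emph{tilted} reference measure on path space, and then invoke the existence-uniqueness theory of Beurling, Fortet, Jamison, and L\'{e}onard. Step one is to apply Girsanov's theorem to the controlled diffusion $\differential\bm{x}_t^{\bm{u}} = \bm{u}\differential t + \sqrt{2}\differential\bm{w}_t$, identifying $\mathbb{E}\int_{t_0}^{t_1}\tfrac12|\bm{u}|^2\differential t$ with $D_{\rm KL}(\mathbb{P}^{\bm{u}}\|\mathbb{W})$, where $\mathbb{P}^{\bm{u}}$ is the law of $\bm{x}_t^{\bm{u}}$ on $\Omega$. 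Step two is to absorb the state-cost contribution $\mathbb{E}\int q\differential t$ into an exponential tilt of the reference via the Feynman--Kac formula: define the finite sub-probability measure $\mathbb{W}^q$ on $\Omega$ by the Radon--Nikodym density $\differential\mathbb{W}^q/\differential\mathbb{W} = \exp\!\left(-\int_{t_0}^{t_1} q(t,\bm{x}_t)\differential t\right)$. A direct manipulation shows that, up to an additive constant independent of $(\rho_t,\bm{u})$, the functional \eqref{LambertSBPobj_General} equals $D_{\rm KL}(\mathbb{P}^{\bm{u}}\|\mathbb{W}^q)$, so \eqref{LambertSBP} is equivalent to the abstract problem
\[
\inf_{\mathbb{P}\in\Pi_{01}} D_{\rm KL}\!\left(\mathbb{P}\,\big\|\,\mathbb{W}^q\right).
\]

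With the reformulation in hand, step three is to verify that $\mathbb{W}^q$ possesses a strictly positive two-time marginal density $\kappa(t_0,\bm{x},t_1,\bm{y})$---this is precisely the fundamental solution of the reaction-diffusion PDE $\partial_t u = \Delta u - q u$ whose closed form is the main object of Sec.~\ref{sec:MainResults}---and that the endpoint constraints are compatible with finite relative entropy. The finite second-moment hypothesis on $\rho_0,\rho_1$ yields this compatibility via standard Gaussian tail estimates; for the quadratic case of interest one can make this explicit using \eqref{CentralIdentityQFT}. Step four is to invoke the classical Beurling--Fortet--Jamison result in the general Markov-reference form due to L\'{e}onard \cite{leonard2012schrodinger}: strict convexity of relative entropy on the convex set $\Pi_{01}$ gives uniqueness of the minimizer $\mathbb{P}^{\rm opt}$, while lower semi-continuity together with the tightness inherited from the marginal constraints gives existence. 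The minimizer is automatically Markovian (an $h$-transform of $\mathbb{W}^q$), which via the Doob transform translates into a unique optimal pair $(\rho_t^{\rm opt},\bm{u}^{\rm opt})$ for the original problem.

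The main obstacle is making sense of the tilted measure $\mathbb{W}^q$ when $q$ is only assumed measurable and a priori unbounded, since then $\mathbb{W}^q$ could fail to be $\sigma$-finite or the kernel $\kappa$ could blow up. The way around this is to exploit sign information: whenever $q$ is bounded below (in particular whenever $q\geq 0$, which covers the quadratic cost with $\bm{Q}\succeq\bm{0}$ treated in Sec.~\ref{sec:MainResults}), the density $\differential\mathbb{W}^q/\differential\mathbb{W}$ is bounded above by a constant, so $\mathbb{W}^q$ is a well-defined finite measure dominated by $\mathbb{W}$ and $\kappa$ is strictly positive and finite everywhere by the Feynman--Kac representation. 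The sharper, quantitative task of producing $\kappa$ in closed form---so that the variational problem is genuinely ``exactly solvable'' in the sense advertised in the abstract---is deferred to the subsequent sections.
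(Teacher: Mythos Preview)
Your approach is essentially the paper's: both arguments use Girsanov to convert the control-cost term into relative entropy against the Wiener measure, then absorb the state cost $q$ into an exponential tilt of the reference measure, reducing \eqref{LambertSBP} to $\inf_{\mathbb{P}\in\Pi_{01}}D_{\rm KL}(\mathbb{P}\|\mathbb{W}^{q})$ (up to an additive constant over $\Pi_{01}$), and conclude via strict convexity of $D_{\rm KL}$.

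The gap is in generality. The theorem as stated asks for \emph{measurable} $q$, with no sign or boundedness assumption, but your step two only makes sense when $q$ is bounded below: otherwise $\exp\!\bigl(-\int_{t_0}^{t_1} q(t,\bm{x}_t)\,\differential t\bigr)$ need not be integrable under $\mathbb{W}$, so your $\mathbb{W}^{q}$ may fail to be a $\sigma$-finite measure and the KL reformulation collapses. You flag this yourself but then retreat to the case $q\geq 0$. The paper does not retreat: it splits $q=q^{+}-q^{-}$, introduces the stopping time $\tau_t:=\inf\{\widetilde{t}>t:\int_t^{\widetilde{t}}q^{-}(s,\cdot)\,\differential s=\infty\}$, and builds the tilted reference via the indicator $\bm{1}_{\{t_1<\tau_{t_0}\}}$, following the construction in \cite[Sec.~4]{aebi1992large}. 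This is precisely the missing ingredient that makes the argument go through for merely measurable $q$; without it your proof covers only the special (though, for this paper, sufficient) case of nonnegative quadratic cost.
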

The unique solution to \eqref{LambertSBP} can be determined in terms of the harmonic and coharmonic factors $\varphi,\widehat{\varphi}$, a.k.a. \emph{Schr\"{o}dinger factors}, as follows (proof in Appendix \ref{Proof_of_ThmHopfColeReactionDiffusion}). {\black{Variants of this result have appeared before; see e.g., \cite{chen2015optimalACC}, \cite[Sec. 5]{chen2021stochastic}.}}

\begin{theorem}[\textbf{Minimizer of SB problem with state cost-to-go}]\label{ThmHopfColeReactionDiffusion}
 Consider the pair of Schr\"{o}dinger factors $(\widehat{\varphi},\varphi)$ that solve the system of forward and backward linear reaction-diffusion PDEs:
\begin{subequations}
\begin{align}
\displaystyle\frac{\partial\widehat{\varphi}}{\partial t} &= \underbrace{\left(\Delta - q\right)}_{{\red{\mathcal{L}_{\rm{forward}}}}}\widehat{\varphi},    
\label{FactorPDEForward}\\
\displaystyle\frac{\partial\varphi}{\partial t} &= \underbrace{\left(-\Delta + q\right)}_{{\red{\mathcal{L}_{\rm{backward}}}}}\varphi,
\label{FactorPDEBackward}
\end{align}
\label{FactorPDEs}
\end{subequations}
with coupled boundary conditions
\begin{align}
\widehat{\varphi}(t=t_0,\cdot)\varphi(t=t_0,\cdot) = \rho_0, \quad \widehat{\varphi}(t=t_1,\cdot)\varphi(t=t_1,\cdot) = \rho_1.
\label{factorBC}    
\end{align}
For all $t\in[t_0,t_1]$, the minimizing pair for \eqref{LambertSBP} is
\begin{subequations}
\begin{align}
\rho^{\rm{opt}}_t(t,\cdot) &= \widehat{\varphi}(t,\cdot)\varphi(t,\cdot) \label{OptimallyControlledJointPDF},\\
\bm{u}^{\rm{opt}}(t,\cdot) &= \nabla_{(\cdot)}\log\varphi(t,\cdot).\label{OptimalVelocity}
\end{align}
\label{RecoverOptimalSolution}
\end{subequations}
\label{linear_PDEs}
\end{theorem}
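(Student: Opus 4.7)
The plan is to derive the optimality system for \eqref{LambertSBP} by the Lagrange multiplier (adjoint) method and then apply a Hopf--Cole logarithmic transformation to linearize the resulting Hamilton--Jacobi--Bellman equation, producing the pair \eqref{FactorPDEs}. Under the change of variable $\bm{m}:=\rho_t\bm{u}$, the control-cost summand of \eqref{LambertSBPobj_General} becomes the perspective transform of $\tfrac{1}{2}|\cdot|^2$ and is jointly convex in $(\rho_t,\bm{m})$; the state-cost summand is linear in $\rho_t$, and the constraint \eqref{LambertSBPdynconstr} is linear in $(\rho_t,\bm{m})$. Consequently, the first-order Karush--Kuhn--Tucker conditions obtained from the Lagrangian are both necessary and sufficient for the (unique, by Theorem~\ref{ThmLSBPExistenceUniqueness}) global minimizer, so it suffices to exhibit the announced pair as a critical point.

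\textbf{Steps.} First I would adjoin \eqref{LambertSBPdynconstr} using a costate $\psi(t,\bm{x})$ and integrate by parts in time and space, invoking sufficient decay of $\rho_t$ and $\nabla\rho_t$ at infinity. Variation with respect to $\bm{u}$ then yields pointwise $\bm{u}^{\rm{opt}}\propto\nabla\psi$, while variation with respect to $\rho_t$ produces a Hamilton--Jacobi--Bellman equation of the form
\begin{equation*}
\partial_t \psi + \tfrac{1}{2}|\nabla\psi|^2 + \Delta\psi = q(t,\bm{x}).
\end{equation*}
Next, I would apply the Hopf--Cole substitution $\psi = c\log\varphi$, choosing the constant $c$ so as to eliminate the quadratic-in-gradient term; the HJB then collapses exactly to the backward reaction--diffusion PDE \eqref{FactorPDEBackward}, and the optimal policy takes the form \eqref{OptimalVelocity}. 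With $\varphi$ in hand, I would define $\widehat{\varphi}:=\rho_t^{\rm{opt}}/\varphi$, differentiate in $t$, and substitute \eqref{LambertSBPdynconstr} for $\partial_t \rho_t^{\rm{opt}}$ and \eqref{FactorPDEBackward} for $\partial_t\varphi$; after cancellation of the cross terms involving $\langle\nabla\widehat{\varphi},\nabla\varphi\rangle$, the forward PDE \eqref{FactorPDEForward} for $\widehat{\varphi}$ emerges. The coupled endpoint conditions \eqref{factorBC} are inherited directly from \eqref{LambertSBPterminalconstr} via the factorization \eqref{OptimallyControlledJointPDF}.

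\textbf{Main obstacle.} The principal technical hurdle is making the Hopf--Cole step rigorous, i.e., establishing strict positivity of $\varphi$ throughout $[t_0,t_1]\times\mathbb{R}^{n}$ so that $\log\varphi$ and $\nabla\log\varphi$ (as required in \eqref{OptimalVelocity}) are well defined. I would argue this via the strong maximum principle for the linear parabolic operator $\mathcal{L}_{\rm{backward}}$ of \eqref{FactorPDEBackward}, under mild regularity and lower-boundedness assumptions on $q$ consistent with the measurability hypothesis of Theorem~\ref{ThmLSBPExistenceUniqueness}. A secondary subtlety is the well-posedness of the coupled boundary-value Schr\"odinger system \eqref{FactorPDEs}--\eqref{factorBC}: the existence of a positive pair $(\widehat{\varphi},\varphi)$ meeting the bilinear endpoint conditions is the analog, for the Markov kernel of $\mathcal{L}_{\rm{forward}}$, of the classical Fortet--Beurling/Sinkhorn factorization problem. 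Its solvability is already guaranteed abstractly by Theorem~\ref{ThmLSBPExistenceUniqueness}, but the constructive verification hinges on strict positivity of this Markov kernel -- a property that will become transparent once the closed-form kernel is derived in Sec.~\ref{sec:MainResults}.
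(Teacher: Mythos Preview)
Your proposal is correct and follows essentially the same route as the paper: adjoin the Fokker--Planck constraint with a multiplier $\psi$, integrate by parts, minimize pointwise over $\bm{u}$ to obtain $\bm{u}^{\rm opt}=\nabla\psi$ and the HJB equation $\partial_t\psi+\tfrac{1}{2}|\nabla\psi|^2+\Delta\psi=q$, then apply the Hopf--Cole substitution $\varphi=\exp(\psi)$, $\widehat{\varphi}=\rho_t^{\rm opt}\exp(-\psi)$ to obtain the linear pair \eqref{FactorPDEs}. Your extra remarks on joint convexity via the perspective function (to upgrade necessity to sufficiency) and on strict positivity of $\varphi$ via the parabolic maximum principle are sound additions that the paper's proof leaves implicit.
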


\begin{remark}\label{ReamrkBornRelation}
The PDEs \eqref{FactorPDEs} have striking similarities with the (time dependent) Schr\"{o}dinger equations in quantum mechanics:
\begin{align}
-\complexi\displaystyle\frac{\partial\widehat{\Psi}}{\partial \tau} = \left(\Delta - q\right)\widehat{\Psi}, \qquad   
\complexi\displaystyle\frac{\partial\Psi}{\partial \tau} = \left(-\Delta + q\right)\Psi, \qquad \complexi:=\sqrt{-1}.
\label{QuantumFactorPDEs}
\end{align}
The PDEs \eqref{QuantumFactorPDEs} can be obtained from \eqref{FactorPDEs} under imaginary time change (Wick's rotation) $t\mapsto \complexi\tau$, where $q(\cdot)$ plays the role of quantum mechanical potential. Then, $\Psi,\widehat{\Psi}$ are the wave function and its adjoint, and their product $\widehat{\Psi}(t,\cdot)\Psi(t,\cdot)$ returns the probability amplitude as in \eqref{OptimallyControlledJointPDF}. In other words, the Schr\"{o}dinger factors $(\widehat{\varphi},\varphi)$ in Theorem \ref{ThmHopfColeReactionDiffusion} are classical analogues of the wave functions, and the relation \eqref{OptimallyControlledJointPDF} is the classical analogue of the celebrated Born's relation \cite{born1926quantenmechanik}. {\black{For further discussions relating \eqref{FactorPDEs} and \eqref{QuantumFactorPDEs}, and their solutions, see \cite[Ch. 1.4, 2.1]{aebi1996schrodinger} .}}
\end{remark}

\begin{remark}\label{ReamrkCreationKilling}
Unlike the classical SB, the PDEs \eqref{FactorPDEs} do not form an $L_2$-adjoint pair due to the change in sign in reaction terms among the two. From a probabilistic viewpoint, the $\pm q\widehat{\varphi}$ terms represent state-time-dependent creation or killing of probability mass with rate $q$.
\end{remark}

\subsection{Dynamic Sinkhorn recursion}

\begin{figure}[htbp]
  \centering
  \includegraphics[width=0.7\textwidth]{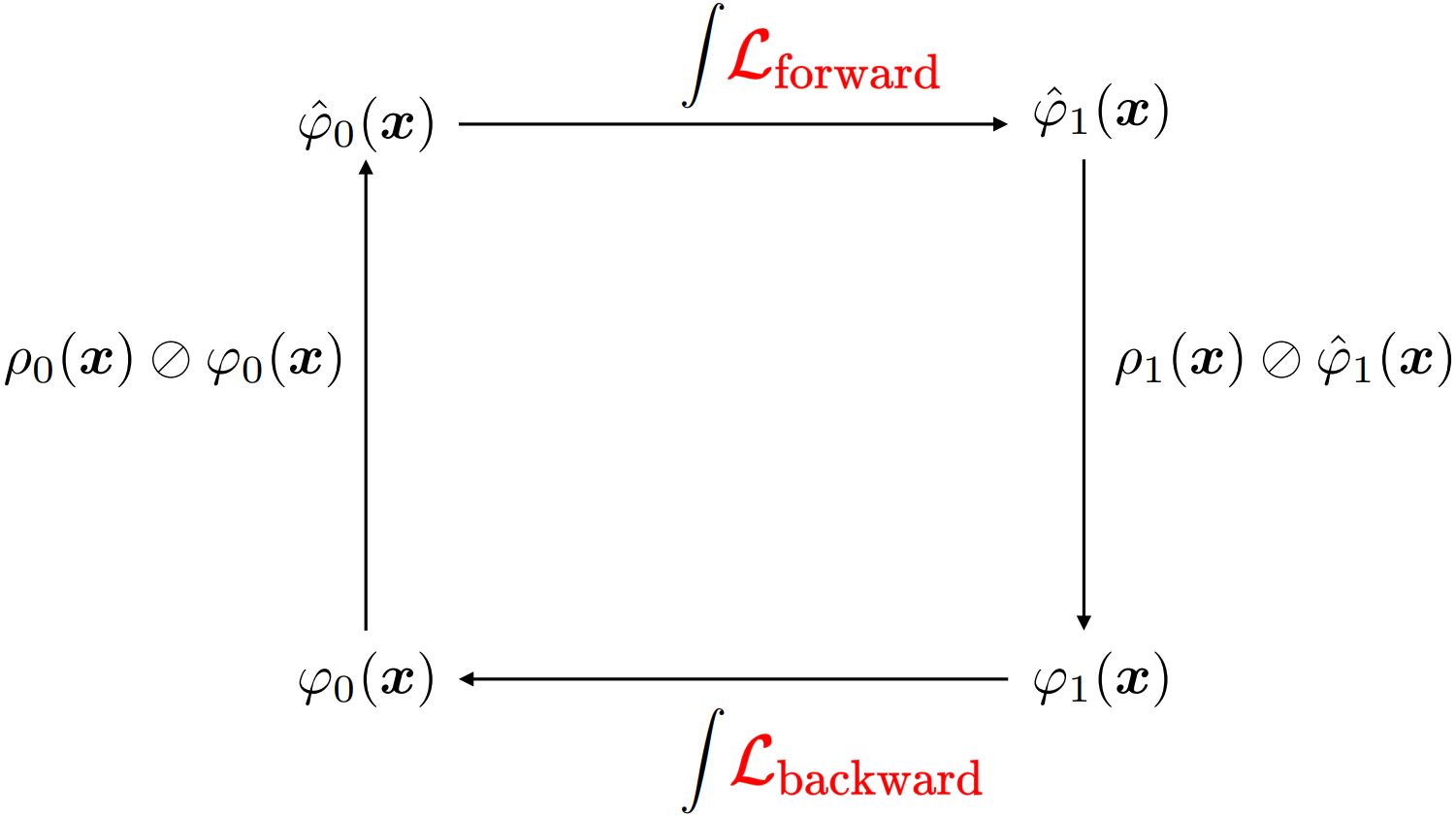}
  \caption{Dynamic Sinkhorn recursion to solve SBP with linear convergence in Hilbert metric. In this work, the $\mathcal{L}_{\rm{forward}}, \mathcal{L}_{\rm{backward}}$ are as in \eqref{FactorPDEs}.}
  \label{fig:FixedPointRecursion}
\end{figure}

Even the classical SB ($q=0$) has no analytical solution for generic problem data: $\rho_{0},\rho_{1}$ with finite second moments. Standard approach to numerically solve SB problems is to use dynamic Sinkhorn recursions which involve forward and backward PDE initial value problems (IVPs) associated with \eqref{FactorPDEs}. To see this, let $\widehat{\varphi}_0(\cdot) := \widehat{\varphi}(t=t_0,\cdot)$ and $\varphi_1(\cdot):=\varphi(t=t_1,\cdot)$. If $\kappa$ is the Markov kernel or Green's function associated with \eqref{FactorPDEs}, then solving the SB problem can be reduced to the computation of the function pair $\left(\widehat{\varphi}_{0}(\cdot),\varphi_{1}(\cdot)\right)$ from the following system of coupled nonlinear integral equations:
\begin{subequations}
\begin{align}
& \rho_0(\bm{x})=\widehat{\varphi}_{0}(\bm{x}) \int_{\mathbb{R}^n} \kappa(t_0, \bm{x}, t_1, \boldsymbol{y}) \varphi_{1}(\boldsymbol{y}) \mathrm{d} \boldsymbol{y}, \label{backward}\\
& \rho_1(\bm{x})=\varphi_{1}(\bm{x}) \int_{\mathbb{R}^n} \kappa(t_0, \boldsymbol{y}, t_1, \bm{x}) \widehat{\varphi}_{0}(\boldsymbol{y}) \mathrm{d} \boldsymbol{y},\label{forward}
\end{align}
\label{SchrodingerSystemGeneral}
\end{subequations}
known as the \emph{Schr\"{o}dinger system}. This system \eqref{SchrodingerSystemGeneral} can in turn be solved via a fixed point recursion over $\left(\widehat{\varphi}_{0}(\cdot),\varphi_{1}(\cdot)\right)$ shown in Fig. \ref{fig:FixedPointRecursion} that is known \cite{chen2016entropic} to be contractive in Hilbert's projective metric \cite{bushell1973hilbert}.

Specifically, one makes everywhere positive initial guess of $\widehat{\varphi}_0(\cdot)$, then integrates $\widehat{\varphi}(t,\cdot)$ forward from $t_0$ to $t_1$ to determine $\widehat{\varphi}_1(\cdot):=\widehat{\varphi}(t_1,\cdot)$. Since $\rho_1(\cdot)$ is known, enforcing boundary condition \eqref{factorBC} at $t=t_1$ gives $\varphi_1(\cdot) = \rho_1(\cdot) \oslash \widehat{\varphi}_1(\cdot)$. With this initial value, integrating $\varphi(t,\cdot)$ backward in time yields $\varphi_0(\cdot):=\varphi(t_0,\cdot)$. With known $\rho_0(\cdot)$, we return to $\widehat{\varphi}_0(\cdot)$ by evaluating $\rho_0(\cdot) \oslash \varphi_0(\cdot)$, concluding one epoch of the fixed point recursion in Fig. \ref{fig:FixedPointRecursion}. The converged $\left(\widehat{\varphi}_{0}(\cdot),\varphi_{1}(\cdot)\right)$ thus determined, are then used to compute the Schr\"{o}dinger factors at arbitrary $t \in [t_0, t_1]$ as
\begin{subequations}
\begin{align}
& \widehat{\varphi}(t,\bm{x}) := \int_{\mathbb{R}^n} \kappa(t_0, \boldsymbol{y}, t, \bm{x}) \widehat{\varphi}_0(\boldsymbol{y}) \mathrm{d} \boldsymbol{y}, \quad t \geq t_0, \label{SchrodingerFactorsForward}\\
& \varphi(t,\bm{x}) := \int_{\mathbb{R}^n} \kappa(t, \boldsymbol{x}, t_1, \boldsymbol{y}) \varphi_1(\boldsymbol{y}) \mathrm{d} \boldsymbol{y}, \quad t \leq t_1. \label{SchrodingerFactorsBackward}
\end{align}
\label{SchrodingerFactors}
\end{subequations}
The forward and backward integration needed in the fixed point recursion described above are special cases of \eqref{SchrodingerFactorsForward} and \eqref{SchrodingerFactorsBackward}, respectively. Therefore, determining the appropriate Markov kernel $\kappa$ facilitates the solution to the SB with state cost.

\subsection{Exactly solvable SBs and closed form kernels}\label{subsec:ExactlySolvable} We say that an SB problem is \emph{exactly solvable} if the corresponding (uncontrolled) Markov kernel $\kappa$ is known in closed form\footnote{In view of Remark \ref{ReamrkBornRelation}, our nomenclature matches that in quantum mechanics. For example, the PDEs \eqref{QuantumFactorPDEs} are said to be \emph{exactly solvable} if the corresponding Green's functions or ``propagators'' \cite[Ch. 2.6]{sakurai2020modern} can be determined in closed form.} \cite{borwein2013closed}, i.e., without integrals or series sum. This is the case for classical SB where $\kappa \equiv \kappa_{0}$, the heat kernel in \eqref{defHeatKernel}. 

This is also the case for \emph{linear SB}, i.e., when the prior physics is of the form $\differential\bm{x}_t=\bm{A}(t)\bm{x}_t \differential t + \sqrt{2}\bm{B}(t)\differential\bm{w}_t$ where the matricial trajectory pair $(\bm{A}(t),\bm{B}(t))$ is continuous and bounded for all $t\in[t_0,t_1]$, the associated state transition matrix is $\bm{\Phi}_{t\tau}:=\bm{\Phi}(t,\tau)$ $\forall t_0\leq \tau\leq t\leq t_1$, and the pair $(\bm{A}(t),\bm{B}(t))$ controllable in the sense that the controllability Gramian
$$\bm{\Gamma}_{t_{1}t_{0}}:=\!\int_{t_0}^{t_1}\!\!\bm{\Phi}_{t_{1}\tau}\bm{B}(\tau)\bm{B}^{\top}(\tau)\bm{\Phi}_{t_{1}\tau}^{\top}\differential\tau \succ \bm{0}.$$
In this case, the Markov kernel $\kappa \equiv \kappa_{\rm{Linear}}$ is \cite[Sec. III]{teter2023contraction}
\begin{align}
\kappa_{\rm{Linear}}\left(t_0,\bm{x}_0,t_1,\bm{x}_1\right) =& \left(4\pi(t_1 - t_0)\right)^{-n/2}\det\left(\bm{\Gamma}_{t_{1}t_{0}}\right)^{-1/2}\nonumber\\
&\exp\left(-\dfrac{\left(\bm{\Phi}_{t_{1}t_{0}}\bm{x}_0 - \bm{x}_{1}\right)^{\top}\bm{\Gamma}_{t_{1}t_{0}}^{-1}\left(\bm{\Phi}_{t_{1}t_{0}}\bm{x}_0 - \bm{x}_{1}\right)}{4(t_1-t_0)}\right).
\label{LTVkernel}    
\end{align}

A closed form $\kappa$ such as \eqref{defHeatKernel} or \eqref{LTVkernel} helps {\black{numerically}} implement the forward and backward pass in dynamic Sinkhorn recursion (Fig. \ref{fig:FixedPointRecursion}) via simple matrix-vector multiplications. This obviates {\black{randomized computation with}} Feynman-Kac path integrals or other nontrivial solvers \cite{caluya2021wasserstein,teter2024solution} which incur function approximation errors.

To the best of our knowledge, no other \emph{exactly solvable} model for SB is known. We next derive $\kappa$ in closed form for SB with quadratic state cost-to-go, i.e., for \eqref{FactorPDEs} with $q(t,\bm{x}) = \frac{1}{2}\bm{x}^{\top}\bm{Q}\bm{x}$, $\bm{Q}\succeq \bm{0}$. 

\section{Main Results}\label{sec:MainResults}
In this section, we first deduce the Markov kernel for the SB problem in closed form with quadratic state cost $q(t,\bm{x}) = \frac{1}{2}\bm{x}^{\top}\bm{Q}\bm{x}$ where $\bm{Q}\succ\bm{0}$ (Theorem \ref{Thm:Kernel}). We denote this kernel as $\kappa_{++}$. We then generalize our results for $\bm{Q}\succeq\bm{0}$ (Theorem \ref{SemiPosDefKernel}) to derive the corresponding kernel $\kappa_{+}$.

It suffices to derive the Markov kernel for the reaction-diffusion PDE   \eqref{FactorPDEForward} since we may apply the same result to determine a solution to the PDE \eqref{FactorPDEBackward} under the change-of-variable $t\mapsto\overline{t} := t_1 - (t - t_0)$. So w.l.o.g., we restrict our derivation of kernel to \eqref{FactorPDEForward}.

\subsection{Kernel for \texorpdfstring{$\bm{Q}\succ\bm{0}$}{Q}}
For $\bm{Q}\succ\bm{0}$, recall the eigen-decomposition $\frac{1}{2}\bm{Q} = \bm{V}^{\top}\bm{D}\bm{V}$ where the positive diagonal matrix $\bm{D}$ has the eigenvalues of $\frac{1}{2}\bm{Q}$ along its main diagonal, and the columns of the orthogonal matrix $\bm{V}$ are the eigenvectors of $\frac{1}{2}\bm{Q}$. For convenience, let  $d_{i}>0$ be the $i$th diagonal entry of $\bm{D}$, let $\bm{y} := \bm{V}\bm{x}$, and 
\begin{align}
\widehat{\eta}(t,\bm{y}):=\widehat{\varphi}\left(t,\bm{x}=\bm{V}^{\top}\bm{y}\right).
\label{defEtaHat}
\end{align}
The change-of-variable $\bm{x}\mapsto\bm{y}$ gives $\Delta_{\bm{x}} \widehat{\varphi}(t,\bm{x}) = \Delta_{\bm{y}} \widehat{\eta}(t,\bm{y})$, and \eqref{FactorPDEForward} becomes
\begin{align}
    \frac{\partial \widehat{\eta}}{\partial t} &=  \Delta_{\bm{y}} \widehat{\eta} - \left(\bm{y}^{\top}\bm{D} \bm{y}\right)\widehat{\eta}    \label{FactorPDEForwardNew}\\
    &=\displaystyle\sum_{i=1}^{n}\left(\dfrac{\partial^2}{\partial y_i^2} - d_i y_i^2\right)\widehat{\eta}.\nonumber
\end{align}
We seek a separation-of-variables solution ansatz $\widehat{\eta}(t, \bm{y}) = T(t) \prod_{i=1}^{n}Y_{i}(y_{i})$ for \eqref{FactorPDEForwardNew}, which requires us to satisfy 
$\frac{1}{T} \frac{\differential T}{\differential t} = \sum_{i = 1}^{n}\left( \frac{1}{Y_i} \frac{\differential^2 Y_i}{\differential y_i^2}  - d_i y_i^2 \right) = c$ for some constant $c\in\mathbb{R}$. Equivalently, we need to satisfy the $n+1$ ODEs
\begin{align*}
    &\dfrac{\differential T}{\differential t} = cT,\\
&\dfrac{\differential^2 Y_1}{\differential y_1^2}  - \left(d_1 y_1^2 + c_1\right)Y_1 = 0,\\
    &\quad\vdots \\
    &\dfrac{\differential^2 Y_n}{\differential y_n^2}  - \left(d_n y_n^2 + c_n\right)Y_n = 0,
    \end{align*}
for constants $c,c_1,\hdots,c_n\in\mathbb{R}$ subject to the constraint $c_1 + \ldots + c_n = c$. 

The time ODE has solution $T(t) = ke^{c(t-t_0)}$ for some constant $k \in \mathbb{R}$. To determine each $Y_k(y_k)$, we use the following Lemma (proof in Appendix \ref{proof_of_SolToSpatialODE}).
\begin{lemma}[\textbf{Solution of a parametric second order nonlinear ODE}] For parameters $c\in\mathbb{R}$ and $d>0$, the solutions to the parametric ODE
\begin{align}
    \dfrac{\differential^2 Y}{\differential y^2}  - (d y^2 + c)Y = 0,
    \label{SpatialODE}
\end{align}
are given by
\begin{align}
    Y = a \exp{\left(- \frac{\sqrt{d} y^2}{2}\right)} H_{n}\left(d^{1/4} y \right),\quad\text{for any}\quad n = - \frac{c}{2\sqrt{d}} - \frac{1}{2} \in \mathbb{N}_{0},\quad a\in\mathbb{R}.
    \label{SolToSpatialODE}
\end{align}
\label{Lemma:SolToSpatialODE}
\end{lemma}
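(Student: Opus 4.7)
The plan is to reduce \eqref{SpatialODE} to the physicist's Hermite differential equation $\psi'' - 2z\psi' + 2n\psi = 0$ by peeling off a Gaussian factor and rescaling. Since the Hermite polynomials $H_n$ in \eqref{defPhysicistHermitPoly} solve Hermite's ODE precisely when the parameter $n\in\mathbb{N}_0$, this will simultaneously identify both the shape of the solution and the quantization condition $n = -c/(2\sqrt{d}) - 1/2 \in \mathbb{N}_0$ stated in \eqref{SolToSpatialODE}.

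Concretely, I would first try the ansatz $Y(y) = \exp\!\left(-\tfrac{1}{2}\sqrt{d}\, y^2\right)\phi(y)$, motivated by the large-$y$ WKB behavior $Y \sim \exp(\pm\tfrac{1}{2}\sqrt{d}\, y^{2})$ expected from the term $d y^{2}Y$ in \eqref{SpatialODE}, and retain only the decaying branch so that the resulting $\phi$ has a chance of being polynomial. Differentiating twice and substituting into \eqref{SpatialODE}, the exponential factor cancels throughout and the $d y^{2}$-contribution coming from $Y''$ kills the $d y^{2}Y$ term, leaving
\begin{equation*}
\phi'' - 2\sqrt{d}\, y\,\phi' - \bigl(\sqrt{d} + c\bigr)\phi = 0.
\end{equation*}

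Next, I would rescale with $z := d^{1/4} y$ and set $\psi(z) := \phi(y)$. The chain rule gives $\phi' = d^{1/4}\psi'$ and $\phi'' = d^{1/2}\psi''$, and after dividing through by $\sqrt{d}$ the ODE becomes
\begin{equation*}
\psi''(z) - 2z\,\psi'(z) - \left(1 + \tfrac{c}{\sqrt{d}}\right)\psi(z) = 0.
\end{equation*}
Matching this against Hermite's equation $\psi'' - 2z\psi' + 2n\psi = 0$ forces $2n = -\bigl(1 + c/\sqrt{d}\bigr)$, i.e., the exact quantization condition $n = -c/(2\sqrt{d}) - 1/2$ asserted in \eqref{SolToSpatialODE}. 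Whenever this $n$ lies in $\mathbb{N}_{0}$, the physicist's Hermite polynomial $\psi(z) = H_n(z)$ from \eqref{defPhysicistHermitPoly} is a polynomial solution; undoing the substitutions yields $Y(y) = a\exp\!\left(-\tfrac{1}{2}\sqrt{d}\, y^{2}\right) H_n\!\left(d^{1/4} y\right)$ with $a\in\mathbb{R}$ an arbitrary normalization constant.

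I do not anticipate a serious obstacle; the computation is essentially the textbook reduction of the quantum harmonic oscillator eigenvalue problem to Hermite form. The only place one must be a little careful is the sign bookkeeping: the cross term $-2\sqrt{d} y\phi'$ arises from $Y''$ with a single minus sign (not two), and the constant piece $-\sqrt{d}\phi$ coming from differentiating $-\sqrt{d} y$ must be correctly combined with $-c\phi$ to produce the $-(1 + c/\sqrt{d})$ coefficient after rescaling. A brief verification at the end — plugging the claimed $Y$ back into \eqref{SpatialODE} and using $H_n'(z) = 2n H_{n-1}(z)$ together with the Hermite recurrence — would confirm the result without having to invoke uniqueness.
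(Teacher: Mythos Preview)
Your proposal is correct and follows essentially the same route as the paper: peel off the Gaussian factor $\exp(-\tfrac{1}{2}\sqrt{d}\,y^2)$, substitute to obtain $\phi'' - 2\sqrt{d}\,y\,\phi' - (\sqrt{d}+c)\phi = 0$, then rescale by $z = d^{1/4}y$ to land on Hermite's ODE and read off the quantization $n = -c/(2\sqrt{d}) - 1/2$. The only nuance the paper makes slightly more explicit is that the Hermite polynomial is selected (over the second, non-polynomial solution of Hermite's ODE) by imposing polynomial boundedness on $\phi$; your WKB remark about retaining the decaying Gaussian branch ``so that the resulting $\phi$ has a chance of being polynomial'' gestures at the same point.
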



Applying \eqref{SolToSpatialODE} to the ODEs for $Y_1, Y_2,\hdots, Y_n$, we find that the general solution to \eqref{FactorPDEForwardNew} is of the form
\begin{align}
    \widehat{\eta}(t, \bm{y} ) = \!\sum_{m_n = 0}^{\infty}\! \ldots \!\sum_{m_1 = 0}^{\infty}\! a_{\bm{m}} \prod_{i = 1}^{n} \exp{\left(\!- \left(t-t_{0}\right)\sqrt{d_i}\left(2 m_i + 1\right) - \frac{y_i^2 \sqrt{d_i}}{2}\right)} H_{m_i}\!\left( d_i^{1/4}y_i \right)\!,
    \label{general_solution}
\end{align}
where the index vector $\bm{m} := (m_1, m_2, \ldots, m_n) \in \mathbb{N}_{0}^{n}$, and the coefficients $a_{\bm{m}}$ remain to be determined.
Using \eqref{general_solution} together with the orthogonality of Hermite polynomials, we express the IVP solution to \eqref{FactorPDEForwardNew} with known initial condition $\widehat{\eta}_{0}(\bm{y}):=\widehat{\eta}(t_{0},\bm{y}) =\widehat{\varphi}_{0}\left(\bm{V}^{\top}\bm{y}\right)$ as follows (proof in Appendix \ref{proof_of_ParticularSol}).
\begin{proposition}
[\textbf{IVP solution for \eqref{FactorPDEForwardNew} with positive diagonal $\bm{D}$ as series sum}]
For $0\leq t_{0} < t < \infty$ and initial condition $\widehat{\eta}_{0}(\bm{y}):=\widehat{\eta}(t_{0},\bm{y})$, the unique solution to \eqref{FactorPDEForwardNew} with positive diagonal matrix $\bm{D}$ is given by
\begin{align}
    \widehat{\eta}(t, \bm{y} ) = \sum_{m_n = 0}^{\infty} \ldots \sum_{m_1 = 0}^{\infty} \int_{-\infty}^{\infty} \ldots \int_{-\infty}^{\infty} (*) \:\widehat{\eta}_{0}(\bm{z})\: \differential z_1 \ldots \differential z_n     \label{general_solution_with_coeff}
\end{align}
where
\begin{align}
    (*) :=\prod_{i = 1}^{n} \frac{d_i^{1/4}} {\sqrt{\pi} 2^{m_i}\left(m_{i}!\right)} \exp{\left(- \left(t-t_{0}\right)\sqrt{d_i}(2m_i + 1) - \frac{(y_i^2+z_i^2) \sqrt{d_i}}{2}  \right)}\label{integrand}\\
\hspace{200pt}H_{m_i}\hspace{-1pt}\left(\!d_i^{1/4}y_i\!\right)  \hspace{-2pt}H_{m_i}\hspace{-3pt}\left(\!d_i^{1/4}z_i\!\right).\nonumber
\end{align}
\label{ParticularSol}
\end{proposition}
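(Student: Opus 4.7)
The plan is to determine the coefficients $a_{\bm{m}}$ in the general series solution \eqref{general_solution} by imposing the initial condition $\widehat{\eta}(t_0,\bm{y}) = \widehat{\eta}_0(\bm{y})$ and then inverting the expansion via the orthogonality of Hermite polynomials \eqref{OrthogonalityHermitePoly}. Evaluating \eqref{general_solution} at $t=t_0$ kills the temporal exponential factor and gives
\begin{equation*}
\widehat{\eta}_0(\bm{y}) = \sum_{\bm{m}\in\mathbb{N}_0^n} a_{\bm{m}} \prod_{i=1}^n \exp\!\left(-\tfrac{y_i^2\sqrt{d_i}}{2}\right) H_{m_i}\!\left(d_i^{1/4} y_i\right).
\end{equation*}
Since the rescaled Hermite functions $\{e^{-y^2\sqrt{d}/2} H_m(d^{1/4} y)\}_{m\in\mathbb{N}_0}$ form a complete orthogonal system in $L^2(\mathbb{R})$ with respect to the weight $e^{-y^2\sqrt{d}}$ up to a change of variable $u = d^{1/4} y$, the expansion is legitimate for $\widehat{\eta}_0$ in a suitable weighted $L^2$ space.

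Next, I would multiply both sides by $\prod_{i=1}^n \exp(-y_i^2\sqrt{d_i}/2) H_{m'_i}(d_i^{1/4} y_i)$ and integrate over $\mathbb{R}^n$. The two exponentials combine to give the weight $\exp(-y_i^2\sqrt{d_i}) = \exp(-u_i^2)$ under $u_i := d_i^{1/4} y_i$, producing Jacobian factors $d_i^{-1/4}$. Invoking \eqref{OrthogonalityHermitePoly} in each coordinate collapses the double sum to a single Kronecker contribution, yielding
\begin{equation*}
a_{\bm{m}} = \prod_{i=1}^n \frac{d_i^{1/4}}{\sqrt{\pi}\, 2^{m_i}\, m_i!} \int_{\mathbb{R}^n}\widehat{\eta}_0(\bm{z}) \prod_{i=1}^n \exp\!\left(-\tfrac{z_i^2\sqrt{d_i}}{2}\right) H_{m_i}\!\left(d_i^{1/4} z_i\right) \differential \bm{z}.
\end{equation*}
Substituting this expression for $a_{\bm{m}}$ back into \eqref{general_solution} and grouping the $\bm{y}$- and $\bm{z}$-dependent Hermite and Gaussian factors recovers exactly the integrand $(*)$ in \eqref{integrand}, establishing \eqref{general_solution_with_coeff}.

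It then remains to justify uniqueness of the IVP solution represented by \eqref{general_solution_with_coeff}. Since the spatial operator $\Delta_{\bm{y}} - \bm{y}^\top\bm{D}\bm{y}$ (the negative of an anisotropic harmonic oscillator Hamiltonian) is self-adjoint and generates a well-defined contraction semigroup on the appropriate weighted $L^2$ space, the Cauchy problem for \eqref{FactorPDEForwardNew} admits at most one solution with the prescribed initial data. The main technical obstacle I expect is rigorously justifying the interchange of the infinite sum and the integral when passing from the expansion of $\widehat{\eta}_0$ back to the kernel representation; this can be handled by appealing to the dominated convergence theorem together with the super-Gaussian decay of the time-dependent exponential $\exp(-(t-t_0)\sqrt{d_i}(2m_i+1))$ for $t>t_0$, which provides uniform tail control on the series for each fixed $\bm{y}$.
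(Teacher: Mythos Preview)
Your proposal is correct and follows essentially the same approach as the paper: evaluate the general series \eqref{general_solution} at $t=t_0$, multiply by the Hermite--Gaussian weight in each coordinate, and use the orthogonality relation \eqref{OrthogonalityHermitePoly} (after the change of variable $u_i=d_i^{1/4}y_i$) to isolate $a_{\bm{m}}$, then substitute back. Your additional remarks on uniqueness via the semigroup generated by the self-adjoint operator and on justifying the sum--integral interchange go slightly beyond what the paper records, but the core argument is identical.
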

Our next Theorem (proof in Appendix \ref{proof_of_Thm:Kernel}) resolves the series sum in \eqref{general_solution_with_coeff}-\eqref{integrand} to derive the \emph{closed form} Markov kernel $\kappa\equiv\kappa_{++}$ for \eqref{FactorPDEForwardNew} as desired.
\begin{theorem}[\textbf{Closed form kernel $\kappa_{++}$ for IVP solution of \eqref{FactorPDEForwardNew} with positive diagonal $\bm{D}$}]\label{Thm:Kernel}
Consider the same setting as in Proposition \ref{ParticularSol}. The solution \eqref{general_solution_with_coeff} simplifies to
\begin{align}
    \widehat{\eta}(t, \bm{y} ) = \int_{-\infty}^{\infty} \ldots \int_{-\infty}^{\infty} \kappa_{++}(t_{0}, \bm{y}, t, \bm{z}) \:\widehat{\eta}_0(\bm{z})\: \differential z_1 \ldots \differential z_n
\label{EtaHatWithPosDefKernel}    
\end{align}
where the kernel
\begin{align}
    \kappa_{++}(t_{0}, \bm{y}, t, \bm{z}) = \frac{(\det({\black{\bm{M}_{tt_{0}}}}))^{1/4}}{(2\pi)^{n/2}\sqrt{\det(\sinh(2\left(t-t_{0}\right)\sqrt{\bm{D}}))}} \exp{\!\left(\!-\frac{1}{2} \begin{pmatrix} \bm{y}^{\top} & \bm{z}^{\top} \end{pmatrix} {\black{\bm{M}_{tt_{0}}}}\begin{pmatrix} \bm{y}\\ \bm{z} \end{pmatrix}\!\right)},
    \label{kernel_function}
\end{align}
and the matrix
\begin{align}
    {\black{\bm{M}_{tt_{0}}}} = \begin{bmatrix} \sqrt{\bm{D}} \coth{\left(2\left(t-t_{0}\right) \sqrt{\bm{D}}\right)} & - \sqrt{\bm{D}}  \: {\rm{csch}}\left(2\left(t-t_{0}\right) \sqrt{\bm{D}}\right) \\ -\sqrt{\bm{D}} \: {\rm{csch}}\left(2\left(t-t_{0}\right) \sqrt{\bm{D}}\right) & \sqrt{\bm{D}} \coth{\left(2\left(t-t_{0}\right) \sqrt{\bm{D}}\right)} \end{bmatrix}.  \label{Matrix_M_Def}
\end{align}
\end{theorem}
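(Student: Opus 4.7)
The plan is to exploit the coordinate-wise factorization of the integrand $(*)$ in \eqref{integrand}: its Gaussian prefactor, its Hermite weights, and its $m_i$-dependent exponential all split as products over $i=1,\ldots,n$, and the outer sum $\sum_{m_n=0}^{\infty}\cdots\sum_{m_1=0}^{\infty}$ decouples into a product of $n$ independent one-dimensional sums. Hence it suffices to perform the summation in each coordinate $i$ and then take the tensor product of the resulting 1D kernels. For each $i$, writing $\tau := t-t_0$, $s_i := \sqrt{d_i}$, $u_i := e^{-2\tau s_i}\in(0,1)$, $X_i := d_i^{1/4}y_i$, $Z_i := d_i^{1/4}z_i$, and pulling the $m_i$-independent factors out, the one-dimensional summation collapses to the bilinear Hermite series
\begin{equation*}
\Sigma_i(u_i,X_i,Z_i) \;:=\; \sum_{m=0}^{\infty} \frac{u_i^{m}}{2^{m}\,m!}\,H_{m}(X_i)\,H_{m}(Z_i).
\end{equation*}

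\textbf{Key identity (Mehler).} The substantive step is to show that $\Sigma_i$ admits the closed form
\begin{equation*}
\Sigma_i(u,X,Z)\;=\;\frac{1}{\sqrt{1-u^{2}}}\,\exp\!\left(\frac{2u XZ - u^{2}(X^{2}+Z^{2})}{1-u^{2}}\right),\qquad |u|<1,
\end{equation*}
i.e., Mehler's bilinear generating function for the physicist's Hermite polynomials. I would derive this from the tools in Section~\ref{sec:NotationsAndPrelim}: using the Rodrigues formula \eqref{defPhysicistHermitPoly} to replace $H_{m}(Z)e^{-Z^{2}}$ by $(-1)^{m}\tfrac{\mathrm{d}^{m}}{\mathrm{d}Z^{m}}e^{-Z^{2}}$, combining with the ordinary Hermite generating function $\sum_{m}H_{m}(X)t^{m}/m!=e^{2Xt-t^{2}}$, and reducing the remaining shift-in-$Z$ series to Lemma~\ref{LemmaExponentialOfQuadAsSeries}. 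Convergence of the sum and the sum-integral swap are justified by $u_i<1$ for $t>t_0$ together with the Gaussian decay of $\widehat{\eta}_0$ (whence the central identity of QFT, Lemma~\ref{Lemma:CentralIdentityQFT}, is applicable to the resulting inner integrals over $\bm{z}$).

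\textbf{Hyperbolic rewriting and reassembly.} Once Mehler is in hand, I would use the elementary identities $\frac{1+u^{2}}{1-u^{2}} = \coth(2\tau s_i)$, $\frac{2u}{1-u^{2}} = {\rm{csch}}(2\tau s_i)$, and $\frac{e^{-\tau s_i}}{\sqrt{1-u^{2}}} = \frac{1}{\sqrt{2\sinh(2\tau s_i)}}$ to absorb the leftover factors $d_i^{1/4}/\sqrt{\pi}$ and $-\tfrac{s_i}{2}(y_i^{2}+z_i^{2})$ arising from \eqref{integrand}. After a short algebraic rearrangement, the $i$-th factor becomes
\begin{equation*}
\kappa_{++}^{(i)}(t_0,y_i,t,z_i) = \frac{d_i^{1/4}}{\sqrt{2\pi\sinh(2\tau s_i)}}\exp\!\left(-\tfrac{s_i}{2}\bigl[(y_i^{2}+z_i^{2})\coth(2\tau s_i) - 2 y_i z_i\,{\rm{csch}}(2\tau s_i)\bigr]\right).
\end{equation*}
Tensoring in $i$, the prefactor assembles as $(\det\bm{D})^{1/4}/\bigl((2\pi)^{n/2}\sqrt{\det\sinh(2\tau\sqrt{\bm{D}})}\bigr)$ via the matrix function convention \eqref{MatrixFunction}, and the exponents stack into the block quadratic form $-\tfrac{1}{2}(\bm{y}^{\top},\bm{z}^{\top})\bm{M}_{tt_0}(\bm{y}^{\top},\bm{z}^{\top})^{\top}$ with $\bm{M}_{tt_0}$ as in \eqref{Matrix_M_Def}. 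A final cosmetic rewrite uses that the four diagonal blocks of $\bm{M}_{tt_0}$ commute, so $\det\bm{M}_{tt_0} = \det\!\bigl(\bm{D}(\coth^{2}-{\rm csch}^{2})(2\tau\sqrt{\bm{D}})\bigr) = \det\bm{D}$ by $\coth^{2}-{\rm csch}^{2}=1$, allowing the replacement $(\det\bm{D})^{1/4}\mapsto(\det\bm{M}_{tt_0})^{1/4}$ to match \eqref{kernel_function}.

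\textbf{Expected bottleneck.} Everything beyond the Mehler step is elementary (hyperbolic identities, block-matrix determinants, bookkeeping of $2^{m_i}$ and $d_i^{1/4}$ powers). The genuinely technical step is the bilinear Hermite summation, because the paper neither cites nor proves Mehler's identity in the preliminaries; if a self-contained derivation is desired, Lemma~\ref{LemmaExponentialOfQuadAsSeries} and Lemma~\ref{Lemma:CentralIdentityQFT} are precisely the right tools, and I would expect that observation to be the reason they appear in Section~\ref{sec:NotationsAndPrelim}. A tidy alternative is to verify a posteriori that the proposed $\kappa_{++}$ satisfies \eqref{FactorPDEForwardNew} in the variable $\bm{y}$ and converges to $\prod_i\delta(y_i-z_i)$ as $t\downarrow t_0$ by a Gaussian saddle-point argument, which, together with the uniqueness of the IVP for the reaction-diffusion PDE, would yield the closed form without invoking Mehler at all.
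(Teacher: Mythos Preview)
Your proposal is correct and follows essentially the same route as the paper: factor \eqref{general_solution_with_coeff}--\eqref{integrand} into a product of one-dimensional sums, close each sum via Mehler's bilinear Hermite identity (derived self-containedly from Rodrigues \eqref{defPhysicistHermitPoly} and Lemma~\ref{LemmaExponentialOfQuadAsSeries}), then rewrite hyperbolically and assemble into the block form \eqref{kernel_function}--\eqref{Matrix_M_Def}, using $\coth^{2}-{\rm csch}^{2}=1$ to get $\det\bm{M}_{tt_0}=\det\bm{D}$. The only cosmetic difference is that the paper reaches Mehler by pairing Rodrigues for one Hermite factor with the Fourier integral representation $e^{-x^{2}}=\pi^{-1/2}\int e^{-\theta^{2}+2\complexi x\theta}\differential\theta$ for the other (then summing via Lemma~\ref{LemmaExponentialOfQuadAsSeries} and evaluating the resulting Gaussian integral), rather than via the exponential generating function you suggest; Lemma~\ref{Lemma:CentralIdentityQFT} is not actually used in this proof.
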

\begin{remark}\label{ReamrkNewKernelIsNotRadial}
We note that unlike the heat kernel \eqref{defHeatKernel}, $\kappa_{++}$ is not radial in the sense \eqref{kernel_function} is not a function of $\vert\bm{y}-\bm{z}\vert$ alone.
\end{remark}
The next Proposition (proof in Appendix \ref{ProofMatrixM}) states that the matrix ${\black{\bm{M}_{tt_{0}}}}$ in \eqref{Matrix_M_Def} can be seen as positive diagonal scaling of a symplectic positive definite matrix, and is therefore positive definite. This will be useful in establishing properties of $\widehat{\varphi}(t,\bm{x})$ in Proposition \ref{Prop:Properties}.
\begin{proposition}[\textbf{Properties of matrix ${\black{\bm{M}_{tt_{0}}}}$}]\label{Proposition:MatrixM}
For $0\leq t_0 < t < \infty$, and given positive diagonal matrix $\bm{D}$, the matrix ${\black{\bm{M}_{tt_{0}}}}$ in \eqref{Matrix_M_Def} satisfies\\
\noindent(i) decomposition: ${\black{\bm{M}_{tt_{0}}}} = \begin{bmatrix}
\bm{D}^{1/4} & \bm{0}\\
\bm{0} & \bm{D}^{1/4}
\end{bmatrix}{\black{\bm{M}^{(1)}_{tt_{0}}\bm{M}^{(2)}_{tt_{0}}}}\begin{bmatrix}
\bm{D}^{1/4} & \bm{0}\\
\bm{0} & \bm{D}^{1/4}
\end{bmatrix}$ where
\begin{align*}
{\black{\bm{M}^{(1)}_{tt_{0}}}}&=\begin{bmatrix}\cosh(2(t-t_0)\sqrt{\bm{D}})  & -\bm{I} \\  -\bm{I} &  \cosh(2(t-t_0)\sqrt{\bm{D}})  \end{bmatrix},\\
{\black{\bm{M}^{(2)}_{tt_{0}}}}&=\begin{bmatrix} {\rm{csch}}(2(t-t_0)\sqrt{\bm{D}})  & \bm{0} \\  \bm{0} & {\rm{csch}}(2(t-t_0)\sqrt{\bm{D}})\end{bmatrix},
\end{align*}
\noindent (ii) ${\black{\bm{M}^{(1)}_{tt_{0}}\bm{M}^{(2)}_{tt_{0}}}} \in {\rm{Sp}}_{++}\left(2n,\mathbb{R}\right)$,\\
\noindent (iii) ${\black{\bm{M}_{tt_{0}}}}\in{\rm{Sym}}_{++}\left(2n,\mathbb{R}\right)$.
\end{proposition}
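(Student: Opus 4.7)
\textbf{Proof proposal for Proposition \ref{Proposition:MatrixM}.} Because $\bm{D}$ is a positive diagonal matrix, every scalar function of $2(t-t_{0})\sqrt{\bm{D}}$ that we encounter (namely $\cosh, \sinh, \coth, \operatorname{csch}$, and $\bm{D}^{1/4}$ itself) is again diagonal, hence all such matrix functions commute. I will exploit this commutativity throughout. My plan is to dispose of (i) by direct multiplication, of (ii) by a mode-by-mode reduction to a single scalar identity, and of (iii) by combining (i) and (ii) via a congruence argument.

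For part (i), I would first compute the inner product $\bm{M}^{(1)}_{tt_{0}}\bm{M}^{(2)}_{tt_{0}}$ block-wise. Since $\bm{M}^{(2)}_{tt_{0}}$ is block-diagonal with identical diagonal blocks $\operatorname{csch}(2(t-t_{0})\sqrt{\bm{D}})$, multiplication gives
\[
\bm{M}^{(1)}_{tt_{0}}\bm{M}^{(2)}_{tt_{0}} = \begin{bmatrix} \cosh(\cdot)\operatorname{csch}(\cdot) & -\operatorname{csch}(\cdot) \\ -\operatorname{csch}(\cdot) & \cosh(\cdot)\operatorname{csch}(\cdot) \end{bmatrix} = \begin{bmatrix} \coth(\cdot) & -\operatorname{csch}(\cdot) \\ -\operatorname{csch}(\cdot) & \coth(\cdot) \end{bmatrix},
\]
where the argument in each entry is $2(t-t_{0})\sqrt{\bm{D}}$. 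Conjugating by $\operatorname{diag}(\bm{D}^{1/4},\bm{D}^{1/4})$ on both sides and again using commutativity produces $\sqrt{\bm{D}}\coth(\cdot)$ and $-\sqrt{\bm{D}}\operatorname{csch}(\cdot)$ in the appropriate slots, matching \eqref{Matrix_M_Def} exactly.

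For part (ii), let $\bm{N}:=\bm{M}^{(1)}_{tt_{0}}\bm{M}^{(2)}_{tt_{0}}$ as computed above. Symmetry of $\bm{N}$ is immediate since all blocks are diagonal (hence self-adjoint) and the off-diagonal blocks coincide. To verify the symplectic condition $\bm{N}^{\top}\bm{J}\bm{N}=\bm{J}$, I would substitute the block form into $\bm{J}$ and use that $\coth^{2}(s)-\operatorname{csch}^{2}(s)=1$ for all real $s\neq 0$; applied modewise at each eigenvalue $d_{i}>0$ of $\bm{D}$ (with $t>t_{0}$), this gives the identity-block in the upper-right (and lower-left) slot of $\bm{N}^{\top}\bm{J}\bm{N}$, while commutativity of diagonal matrices annihilates the diagonal slots. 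For positive definiteness, I would block-diagonalize $\bm{N}$ along the common eigenbasis of $\sqrt{\bm{D}}$: the resulting $2\times 2$ blocks are $\bigl[\begin{smallmatrix} \coth(s_{i}) & -\operatorname{csch}(s_{i}) \\ -\operatorname{csch}(s_{i}) & \coth(s_{i})\end{smallmatrix}\bigr]$ with $s_{i}:=2(t-t_{0})\sqrt{d_{i}}>0$, having trace $2\coth(s_{i})>0$ and determinant $\coth^{2}(s_{i})-\operatorname{csch}^{2}(s_{i})=1>0$, hence positive definite. Combining symmetry, the symplectic identity, and positive definiteness places $\bm{N}$ in $\mathrm{Sp}_{++}(2n,\mathbb{R})$.

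For part (iii), I would read off symmetry of $\bm{M}_{tt_{0}}$ directly from \eqref{Matrix_M_Def} (diagonal blocks are diagonal matrices, off-diagonal blocks coincide). Writing $\bm{P}:=\operatorname{diag}(\bm{D}^{1/4},\bm{D}^{1/4})$, the decomposition from (i) reads $\bm{M}_{tt_{0}}=\bm{P}\bm{N}\bm{P}=\bm{P}^{\top}\bm{N}\bm{P}$, which is a congruence transformation of $\bm{N}\succ\bm{0}$ by the invertible (since $\bm{D}\succ\bm{0}$) symmetric matrix $\bm{P}$, and hence preserves positive definiteness. The only mild subtlety worth isolating is that the product of two symmetric positive definite matrices need not be symmetric in general; here it is, because $\bm{M}^{(1)}_{tt_{0}}$ and $\bm{M}^{(2)}_{tt_{0}}$ commute (both built from functions of the same diagonal $\sqrt{\bm{D}}$). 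This is the only place where commutativity plays a nontrivial role, and I anticipate no further obstacles.
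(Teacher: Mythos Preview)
Your proposal is correct and follows essentially the same route as the paper: direct block multiplication for (i), the hyperbolic identity $\coth^{2}-\operatorname{csch}^{2}=1$ for the symplectic condition in (ii), and a congruence argument via $\operatorname{diag}(\bm{D}^{1/4},\bm{D}^{1/4})$ for (iii). The only notable difference is in establishing positive definiteness of $\bm{N}=\bm{M}^{(1)}_{tt_{0}}\bm{M}^{(2)}_{tt_{0}}$ in (ii): you reduce to $2\times 2$ blocks along the common eigenbasis and check trace and determinant, whereas the paper first shows $\bm{M}^{(1)}_{tt_{0}}\succ\bm{0}$ via a quadratic-form estimate (using $\cosh(s)>1$ for $s\neq 0$), notes $\bm{M}^{(2)}_{tt_{0}}\succ\bm{0}$, and then argues $\operatorname{eig}(\bm{M}^{(1)}\bm{M}^{(2)})=\operatorname{eig}\bigl((\bm{M}^{(2)})^{1/2}\bm{M}^{(1)}(\bm{M}^{(2)})^{1/2}\bigr)>0$ by congruence. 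Your modewise trace--determinant check is arguably the more direct of the two in this fully diagonal setting, while the paper's congruence argument would generalize more readily to non-commuting factors; in either case the content is the same.
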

{\black{Since $\bm{M}_{tt_{0}}\succ\bm{0}$, the exponential term in \eqref{kernel_function} is of the form $\exp\left(-\frac{1}{2}\dist_{tt_{0}}^2(\bm{y},\bm{z})\right)$ where $\dist_{tt_{0}}(\bm{y},\bm{z})$ is a distance metric. Our next result (proof in Appendix \ref{proof_of_prop_geodesic}) provides geometric insight that $\dist^{2}_{tt_{0}}(\bm{y},\bm{z})$ is, in fact, the minimal value of the action integral induced by the Lagrangian $L:\mathcal{T}\mathbb{R}^{n}\mapsto\mathbb{R}$ given by\footnote{Recall that the diagonal matrix associated with the eigen-decomposition of $\bm{Q}$ is $2\bm{D}$.} 
\begin{align}
L(\bm{q},\bm{v}) := \dfrac{1}{2}\vert\bm{v}\vert^2 + \bm{q}^{\top}\left(2\bm{D}\right)\bm{q}, \quad \left(\bm{q},\bm{v}\right)\in \mathcal{T}\mathbb{R}^{n}.
\label{defLagrangianGeodesic}    
\end{align}
\begin{proposition}[\textbf{Induced distance in $\kappa_{++}$}]\label{prop:Geodesic}
Given $\bm{y},\bm{z}\in\mathbb{R}^{n}$, the minimum value of the action integral associated with the Lagrangian \eqref{defLagrangianGeodesic}: \begin{align}
\dist_{tt_{0}}^2(\bm{y},\bm{z}):=\underset{\gamma(\cdot)\in\big\{\mathcal{C}^{2}\left([t_0,t]\right)\mid \gamma\left(t_0\right)=\bm{y}, \gamma\left(t\right)=\bm{z}\big\}}{\inf}\displaystyle\int_{t_0}^{t}L\left(\gamma(\tau),\dot{\gamma}(\tau)\right)\differential\tau
\label{ActionIntegral}    
\end{align}
equals to $\begin{pmatrix}\bm{y}^{\top} & \bm{z}^{\top} \end{pmatrix} \bm{M}_{tt_{0}}\begin{pmatrix} \bm{y} \\ \bm{z} \end{pmatrix}$, where $\bm{M}_{tt_{0}}$ is given by \eqref{Matrix_M_Def}.
\end{proposition}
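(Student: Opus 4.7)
The plan is to treat the variational problem in \eqref{ActionIntegral} by classical Lagrangian mechanics: compute the Euler--Lagrange extremal in closed form (which is possible because $L$ is quadratic and $\bm{D}$ is diagonal), show it is the global minimizer by convexity, and then evaluate the action at that extremal using an integration-by-parts identity that reduces it to boundary data only.

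First, I would write down the Euler--Lagrange equation for \eqref{defLagrangianGeodesic}. Since $\partial L/\partial \bm{v}=\bm{v}$ and $\partial L/\partial \bm{q}=4\bm{D}\bm{q}$, the extremals satisfy the decoupled linear system $\ddot{\gamma}=4\bm{D}\gamma$. Because $\bm{D}\succ\bm{0}$ is diagonal, the general solution in each coordinate is a linear combination of $\cosh$ and $\sinh$; equivalently, $\gamma(\tau)=\cosh\!\bigl(2\sqrt{\bm{D}}(\tau-t_0)\bigr)\bm{c}_1+\sinh\!\bigl(2\sqrt{\bm{D}}(\tau-t_0)\bigr)\bm{c}_2$ in the sense of \eqref{MatrixFunction}. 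Imposing $\gamma(t_0)=\bm{y}$ gives $\bm{c}_1=\bm{y}$, and imposing $\gamma(t)=\bm{z}$ gives $\bm{c}_2={\rm csch}\!\bigl(2\sqrt{\bm{D}}(t-t_0)\bigr)\bigl[\bm{z}-\cosh(2\sqrt{\bm{D}}(t-t_0))\bm{y}\bigr]$. The map $(\bm{q},\bm{v})\mapsto L(\bm{q},\bm{v})$ is jointly convex (strictly convex in $\bm{v}$; convex in $\bm{q}$ since $2\bm{D}\succeq\bm{0}$), so the resulting extremal is in fact the unique global minimizer of the action functional over the admissible paths.

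Next, I would exploit the following standard trick: along an extremal, integration by parts on the kinetic term yields
\begin{equation*}
\int_{t_0}^{t} \tfrac{1}{2}\vert\dot{\gamma}\vert^{2}\differential\tau = \tfrac{1}{2}\bigl[\gamma^{\top}\dot{\gamma}\bigr]_{t_0}^{t} - \tfrac{1}{2}\int_{t_0}^{t}\gamma^{\top}\ddot{\gamma}\,\differential\tau = \tfrac{1}{2}\bigl[\gamma^{\top}\dot{\gamma}\bigr]_{t_0}^{t} - \int_{t_0}^{t}\gamma^{\top}(2\bm{D})\gamma\,\differential\tau,
\end{equation*}
where the equation $\ddot{\gamma}=4\bm{D}\gamma$ was used. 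Adding the potential term then collapses the action to the boundary expression $\dist_{tt_0}^{2}(\bm{y},\bm{z})=\tfrac{1}{2}\bigl(\bm{z}^{\top}\dot{\gamma}(t)-\bm{y}^{\top}\dot{\gamma}(t_0)\bigr)$.

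Finally, I would substitute the explicit extremal into this boundary expression. Differentiating $\gamma$ and evaluating at the endpoints gives, after routine simplification using the identity $\cosh^{2}-\sinh^{2}=\bm{I}$,
\begin{equation*}
\dot{\gamma}(t_0)=2\sqrt{\bm{D}}\,{\rm csch}\!\bigl(2\sqrt{\bm{D}}(t-t_0)\bigr)\bm{z}-2\sqrt{\bm{D}}\coth\!\bigl(2\sqrt{\bm{D}}(t-t_0)\bigr)\bm{y},
\end{equation*}
\begin{equation*}
\dot{\gamma}(t)=2\sqrt{\bm{D}}\coth\!\bigl(2\sqrt{\bm{D}}(t-t_0)\bigr)\bm{z}-2\sqrt{\bm{D}}\,{\rm csch}\!\bigl(2\sqrt{\bm{D}}(t-t_0)\bigr)\bm{y}.
\end{equation*}
Plugging these into the boundary formula and grouping the four quadratic terms in $(\bm{y},\bm{z})$ exactly reproduces the block matrix $\bm{M}_{tt_{0}}$ in \eqref{Matrix_M_Def}, yielding $\dist_{tt_0}^{2}(\bm{y},\bm{z})=\begin{pmatrix}\bm{y}^{\top} & \bm{z}^{\top}\end{pmatrix}\bm{M}_{tt_{0}}\begin{pmatrix}\bm{y}\\ \bm{z}\end{pmatrix}$, as claimed.

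The only subtle point is justifying that the extremal is actually the minimum (not just a critical point). I expect this to be the minor obstacle: the convexity of $L$ in $(\bm{q},\bm{v})$ together with the linearity of admissible variations is enough, but one should also note that the quadratic hyperbolic boundary-value problem has a unique $\mathcal{C}^{2}$ solution (since $\sinh(2\sqrt{\bm{D}}(t-t_0))$ is invertible for $t>t_0$), guaranteeing well-posedness of the minimization. The bulk of the work is algebraic bookkeeping in the last step, which is routine once the ODE solution and the integration-by-parts reduction are in hand.
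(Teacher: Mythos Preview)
Your proof is correct and follows the same overall strategy as the paper: derive the Euler--Lagrange equation $\ddot{\gamma}=4\bm{D}\gamma$, solve the two-point boundary value problem, justify minimality via convexity, and evaluate the action on the extremal. The difference is purely in the last step. The paper parameterizes the extremal componentwise as $q_i^{\rm opt}(\tau)=a_ie^{\omega_i\tau}+b_ie^{-\omega_i\tau}$ with $\omega_i=2\sqrt{d_i}$, solves for $(a_i,b_i)$ from the endpoints, and then integrates $\int_{t_0}^{t}\bigl(\tfrac12(\dot{q}_i^{\rm opt})^2+\tfrac12\omega_i^2(q_i^{\rm opt})^2\bigr)\differential\tau$ directly, simplifying the resulting exponential expressions via $\sinh(2\cdot)=2\sinh(\cdot)\cosh(\cdot)$. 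You instead use the on-shell integration-by-parts identity to collapse the action to the boundary term $\tfrac12\bigl(\bm{z}^{\top}\dot{\gamma}(t)-\bm{y}^{\top}\dot{\gamma}(t_0)\bigr)$, and then read off $\bm{M}_{tt_0}$ from the endpoint velocities. Your route avoids the explicit time integration and the attendant hyperbolic algebra, at the cost of computing $\dot{\gamma}$ at both endpoints; it is the standard classical-mechanics observation that for quadratic Lagrangians the on-shell action is determined by boundary data alone. Either computation lands on the same quadratic form.
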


\begin{remark}\label{Remark:IntegralNotEqualToOne}
Unlike the Markov kernels \eqref{defHeatKernel} and \eqref{LTVkernel}, the kernel $\kappa_{++}$ in \eqref{kernel_function} is not a transition probability density because of the reaction term contributing to state-dependent creation or killing of mass. This can be seen by applying \eqref{CentralIdentityQFT} to evaluate $\int_{\mathbb{R}^{2n}}\kappa_{++}(t_{0}, \bm{y}, t, \bm{z}) \differential\bm{y}\differential\bm{z}= \det\left(\sqrt{\bm{D}}\sinh\left(2(t-t_{0})\sqrt{\bm{D}}\right)\right)^{-1/2}\neq 1$.
\end{remark}
}}

\subsection{Known kernels as special cases of $\kappa_{++}$}\label{subsec:specialcases} In the special case $\frac{1}{2}\bm{Q}$, and hence $\bm{D}$, equals $\bm{I}$, direct substitution reduces \eqref{kernel_function} to the \emph{multivariate Mehler kernel} \cite{mehler1866ueber}, \cite[Thm. 1]{robert2021coherent} 
\begin{align}
\kappa_{\rm{Mehler}}(t_{0}, \bm{y}, t, \bm{z}) &= \dfrac{1}{\left(2\pi\sinh(2(t-t_0))\right)^{n/2}}\exp\left(\vphantom{\dfrac{1}{2}}{\rm{csch}}(2(t-t_0))\langle\bm{y},\bm{z}\rangle \right.\label{MultivariateMehlerKernel}\\
&\qquad\qquad\qquad\qquad\left.- \frac{1}{2}\coth\left(2(t-t_0)\right)\left(\vert \bm{y}\vert^2 + \vert\bm{z}\vert^2\right)\right),
\nonumber    
\end{align}
which is the propagator or Green's function for the \emph{isotropic quantum harmonic oscillator}. Well-known extensions of the Mehler kernel include the \emph{Kibble-Slepian formula} \cite{kibble1945extension,slepian1972symmetrized} and variants thereof \cite{louck1981extension,hormander1995symplectic,pravda2018generalized}.

Another special case of interest for $\kappa_{++}$ is the limit $\bm{D}\downarrow \bm{0}$ understood as $d_{i}\downarrow 0\:\forall i =1,\hdots,n$. Our next result (proof in Appendix \ref{proof_of_heat_kernel_spl_case}) is that in this limit, the kernel \eqref{kernel_function} recovers the heat kernel \eqref{defHeatKernel}. This will be useful in subsection \ref{subsec:PosSemiDef}.
\begin{theorem}[\textbf{Heat kernel as limiting case of} \texorpdfstring{$\kappa_{++}$}{k}]\label{ThmHeatKernelSplCase}
Consider the kernel $\kappa_{++}$ in \eqref{kernel_function}-\eqref{Matrix_M_Def} and the heat kernel $\kappa_0$ in \eqref{defHeatKernel}. Then, $\displaystyle\lim_{(d_1,\hdots,d_n)\downarrow\bm{0}}\kappa_{++}(t_0, \bm{y}, t, \bm{z}) =\kappa_{0}(t_0, \bm{y}, t, \boldsymbol{z})$.
\end{theorem}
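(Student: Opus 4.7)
The plan is to compute the limit factor by factor, separating the prefactor and the exponential, and exploiting two basic hyperbolic identities that produce clean cancellations. Since $\bm{D}=\operatorname{diag}(d_1,\ldots,d_n)$, every block of $\bm{M}_{tt_{0}}$ is diagonal, so everything reduces to scalar limits at each index.

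For the exponent, I would use the Taylor expansions $\coth(s)=\tfrac{1}{s}+\tfrac{s}{3}+O(s^{3})$ and $\operatorname{csch}(s)=\tfrac{1}{s}-\tfrac{s}{6}+O(s^{3})$ as $s\downarrow 0$, applied at $s_i=2(t-t_0)\sqrt{d_i}$. This yields
\begin{equation*}
\sqrt{d_i}\,\coth(2(t-t_0)\sqrt{d_i})\;\longrightarrow\;\frac{1}{2(t-t_0)},\qquad \sqrt{d_i}\,\operatorname{csch}(2(t-t_0)\sqrt{d_i})\;\longrightarrow\;\frac{1}{2(t-t_0)},
\end{equation*}
so that $\bm{M}_{tt_{0}}\to \tfrac{1}{2(t-t_0)}\bigl[\begin{smallmatrix}\bm{I} & -\bm{I}\\ -\bm{I} & \bm{I}\end{smallmatrix}\bigr]$ entrywise, and the quadratic form
$\begin{pmatrix}\bm{y}^{\top} & \bm{z}^{\top}\end{pmatrix}\bm{M}_{tt_{0}}\begin{pmatrix}\bm{y}\\\bm{z}\end{pmatrix}\longrightarrow \tfrac{1}{2(t-t_0)}|\bm{y}-\bm{z}|^{2}$. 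Hence the exponential in $\kappa_{++}$ converges to $\exp\bigl(-|\bm{y}-\bm{z}|^{2}/(4(t-t_0))\bigr)$.

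For the prefactor, the key observation is the Pythagorean hyperbolic identity $\coth^{2}(s)-\operatorname{csch}^{2}(s)=1$. After a symmetric permutation, $\bm{M}_{tt_{0}}$ becomes block diagonal with $2\times2$ blocks $\bigl[\begin{smallmatrix}a_i & -b_i\\ -b_i & a_i\end{smallmatrix}\bigr]$ where $a_i=\sqrt{d_i}\coth(2(t-t_0)\sqrt{d_i})$, $b_i=\sqrt{d_i}\operatorname{csch}(2(t-t_0)\sqrt{d_i})$, each with determinant $a_i^{2}-b_i^{2}=d_i$. Thus $\det(\bm{M}_{tt_{0}})=\det(\bm{D})$, independently of $t$ and $t_0$, so $\det(\bm{M}_{tt_{0}})^{1/4}=\det(\bm{D})^{1/4}$. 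Meanwhile, $\sinh(2(t-t_0)\sqrt{d_i})\sim 2(t-t_0)\sqrt{d_i}$ gives $\det(\sinh(2(t-t_0)\sqrt{\bm{D}}))^{1/2}\sim (2(t-t_0))^{n/2}\det(\bm{D})^{1/4}$.

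The main subtlety, which is the only place care is needed, is that the $\det(\bm{D})^{1/4}$ factor in the numerator looks as though it vanishes and the one in the denominator the same — the limit is of $0/0$ form. The identity $\det(\bm{M}_{tt_{0}})=\det(\bm{D})$ is exactly what makes these factors cancel exactly rather than in a limit, yielding
\begin{equation*}
\frac{\det(\bm{M}_{tt_{0}})^{1/4}}{(2\pi)^{n/2}\sqrt{\det(\sinh(2(t-t_0)\sqrt{\bm{D}}))}}\;\longrightarrow\;\frac{1}{(4\pi(t-t_0))^{n/2}}.
\end{equation*}
Multiplying the prefactor and exponential limits reproduces the heat kernel $\kappa_{0}(t_0,\bm{y},t,\bm{z})$ from \eqref{defHeatKernel}, completing the proof.
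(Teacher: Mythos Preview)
Your proof is correct and follows essentially the same approach as the paper: exploit the diagonal/separable structure to reduce to scalar limits per coordinate, evaluate $\sqrt{d_i}\coth(2(t-t_0)\sqrt{d_i})\to 1/(2(t-t_0))$ and the analogous $\operatorname{csch}$ and $\sinh$ limits, and assemble. The paper invokes L'H\^{o}pital where you use Taylor expansions, and it bundles the prefactor into a single ratio $\sqrt{\delta_i/\sinh(2(t-t_0)\delta_i)}$ rather than separately noting (as you nicely do) that $\det(\bm{M}_{tt_0})=\det(\bm{D})$ exactly; these are cosmetic differences.
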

{\black{That $\kappa_0$ arises as a limiting case of $\kappa_{++}$ is consistent with the principle of least action in Proposition \ref{prop:Geodesic} since in the limit $\bm{D}\downarrow\bm{0}$, the Lagrangian in \eqref{defLagrangianGeodesic} equals $\frac{1}{2}\vert\bm{v}\vert^2$. Then, \eqref{ActionIntegral} yields the squared Euclidean distance $\vert\bm{y}-\bm{z}\vert^2$ which is what appears inside the exponential in \eqref{defHeatKernel} up to the usual scaling.}}

\subsection{Kernel for \texorpdfstring{$\bm{Q}\succeq\bm{0}$}{Q}}\label{subsec:PosSemiDef}
In Theorem \ref{SemiPosDefKernel} next (proof in Appendix \ref{proof_of_SemiPosDefKernel}), we generalize the closed form Markov kernel for the case $\bm{Q}\succeq\bm{0}$. We denote this generalized kernel as $\kappa_{+}$. We re-index the spatial variables to account for that zero eigenvalues of $\frac{1}{2}\bm{Q}$ can be interlaced with its positive eigenvalues.

\begin{theorem}[\textbf{Closed form kernel $\kappa_{+}$ for IVP solution of \eqref{FactorPDEForwardNew} with nonnegative diagonal $\bm{D}$}]\label{SemiPosDefKernel}
Consider diagonal matrix $\bm{D}$ having $p\in\mathbb{N}_{0}$ zero diagonal entries where $p<n$, and $n-p$ positive diagonal entries. Let $i_{1}, i_{2}, \ldots, i_{n - p}$ be the indices corresponding to positive diagonal entries of $\bm{D}$. Likewise, let $i_{n - p + 1}, i_{n - p + 2},\allowbreak\ldots,i_{n}$ be the indices corresponding to zero diagonal entries of $\bm{D}$.

For $0\leq t_{0} < t < \infty$ and initial condition $\widehat{\eta}_{0}(\bm{y}):=\widehat{\eta}(t_{0},\bm{y})$, the unique solution to \eqref{FactorPDEForwardNew} with such diagonal matrix $\bm{D}$, is
\begin{align}
    \widehat{\eta}(t, \bm{y} ) = \int_{-\infty}^{\infty} \ldots \int_{-\infty}^{\infty} \kappa_{+}(t_{0}, \bm{y}, t, \bm{z}) \:\widehat{\eta}_0(\bm{z})\: \differential z_1 \ldots \differential z_n
\label{EtaHatWithSemiDefKernel}    
\end{align}
where the kernel \begin{align}\label{semi_definite_kernel}
        \kappa_{+}(t_0, \bm{y}, t, \bm{z}) = \kappa_{++}\left(t_0, \bm{y}_{[i_1:i_{n-p}]}, t, \bm{z}_{[i_1:i_{n-p}]}\right) \kappa_{0}\left(t_0, \bm{y}_{[i_{n-p+1}:i_n]}, t, \bm{z}_{[i_{n-p+1}:i_n]}\right),
    \end{align}
    and $\kappa_{++},\kappa_{0}$ are as in Theorem \ref{Thm:Kernel} and \eqref{defHeatKernel}, respectively.    
\end{theorem}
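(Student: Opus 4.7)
The approach is to exploit the fact that, since $\bm{D}$ is diagonal, the spatial operator on the right-hand side of \eqref{FactorPDEForwardNew} separates as $\sum_{i=1}^{n}\mathcal{L}_i$ with $\mathcal{L}_i:=\partial^2/\partial y_i^2 - d_i y_i^2$, and the $\mathcal{L}_i$ pairwise commute because they act on disjoint coordinates. Consequently, the evolution semigroup $\exp\!\left((t-t_0)\sum_{i=1}^{n}\mathcal{L}_i\right)$ factorizes as the tensor product $\prod_{i=1}^{n}\exp\!\left((t-t_0)\mathcal{L}_i\right)$, so the $n$-dimensional Markov kernel for \eqref{FactorPDEForwardNew} is the product of the $n$ one-dimensional Markov kernels of the individual $\mathcal{L}_i$.

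For each index $i$ with $d_i>0$, the corresponding one-dimensional kernel is the scalar $\kappa_{++}$ obtained by specializing Theorem \ref{Thm:Kernel} to $n=1$ with $\bm{D}=d_i$. For each index $i$ with $d_i=0$, the generator $\mathcal{L}_i=\partial^2/\partial y_i^2$ is that of the one-dimensional heat equation, whose Markov kernel is the scalar heat kernel $\kappa_0$ in \eqref{defHeatKernel}; this is also the $d_i\downarrow 0$ limit of the scalar $\kappa_{++}$, consistent with Theorem \ref{ThmHeatKernelSplCase} specialized to $n=1$. Regrouping the scalar factors along the re-indexing described in the statement, the positive-eigenvalue indices $i_1,\ldots,i_{n-p}$ jointly contribute the $(n-p)$-dimensional $\kappa_{++}$ on the sub-vectors $\bm{y}_{[i_1:i_{n-p}]}, \bm{z}_{[i_1:i_{n-p}]}$, while the zero-eigenvalue indices $i_{n-p+1},\ldots,i_n$ jointly contribute the $p$-dimensional heat kernel $\kappa_0$ on $\bm{y}_{[i_{n-p+1}:i_n]}, \bm{z}_{[i_{n-p+1}:i_n]}$, yielding \eqref{semi_definite_kernel}.

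The main step requiring care is the regrouping: one must verify that the product of scalar $\kappa_{++}$ factors over positive-$d_i$ indices re-assembles into the multivariate $\kappa_{++}$ on the corresponding sub-vectors. This is a routine consequence of $\bm{D}$ being diagonal, which forces the matrix functions $\sqrt{\bm{D}}$, $\coth(2(t-t_0)\sqrt{\bm{D}})$, and ${\rm{csch}}(2(t-t_0)\sqrt{\bm{D}})$ in \eqref{Matrix_M_Def} to be diagonal as well. After a symmetric permutation that gathers the positive-$d_i$ coordinates together, both the quadratic form $\tfrac{1}{2}\begin{pmatrix}\bm{y}^{\top} & \bm{z}^{\top}\end{pmatrix}\bm{M}_{tt_0}\begin{pmatrix}\bm{y}\\ \bm{z}\end{pmatrix}$ and the normalizing prefactor in \eqref{kernel_function} split coordinate-wise, and the same holds for the scalar heat-kernel factors; assembling the two groups gives \eqref{semi_definite_kernel}. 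Uniqueness of the IVP solution, inherited from the construction in Proposition \ref{ParticularSol} extended by the tensor-product argument above, then certifies that $\kappa_+$ is indeed the Markov kernel.
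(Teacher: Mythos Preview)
Your proposal is correct and close in spirit to the paper's proof, but the route is slightly different. The paper argues by taking the limit $\left(d_{i_{n-p+1}},\ldots,d_{i_n}\right)\downarrow\bm{0}$ of the already-established $\kappa_{++}$, uses the separable product structure of $\kappa_{++}$ to split off the sub-vector with positive entries, and then invokes Theorem \ref{ThmHeatKernelSplCase} to identify the remaining limit as the heat kernel $\kappa_0$. You instead decompose at the level of the generator: you write $\sum_i\mathcal{L}_i$ with commuting $\mathcal{L}_i$, factor the semigroup coordinate-wise, identify each one-dimensional kernel directly (scalar $\kappa_{++}$ for $d_i>0$, scalar $\kappa_0$ for $d_i=0$), and then regroup. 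Your approach is slightly more elementary in that it does not require the limiting result of Theorem \ref{ThmHeatKernelSplCase} as an essential ingredient (you only cite it for consistency), whereas the paper's route is shorter because it reuses that theorem wholesale. Both arguments ultimately hinge on the same structural fact---the diagonal $\bm{D}$ makes everything separable---so the difference is one of packaging rather than substance.
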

\begin{remark}\label{remark:FromEtaHatBackToPhiHat}
For a given $\bm{Q}\succeq\bm{0}$, the solution \eqref{EtaHatWithPosDefKernel} or \eqref{EtaHatWithSemiDefKernel}, can be brought back to the original $\bm{x}$ coordinates to determine the Schr\"{o}dinger factor
\begin{align}
\widehat{\varphi}(t,\bm{x}) = \widehat{\eta}(t,\bm{y}=\bm{Vx}) = \int_{-\infty}^{\infty} \ldots \int_{-\infty}^{\infty} \kappa_{+}(t_{0}, \bm{Vx}, t, \bm{z}) \:\widehat{\varphi}_0\left(\bm{V}^{\top}\bm{z}\right)\differential z_1 \ldots \differential z_n,
\label{PhiHatAsIntegral}
\end{align}
wherein $\kappa_{+}$ specializes to $\kappa_{++}$ when $\bm{Q}\succ\bm{0}$.
As mentioned earlier, the PDE \eqref{FactorPDEBackward} and its solution can be mapped to that of \eqref{FactorPDEForward} via transformation $t\mapsto t_0 + t_1 - t$; therefore, the other Schr\"{o}dinger factor $\varphi(t,\bm{x}) = \widehat{\eta}(t_0 + t_1 - t,\bm{y}=\bm{Vx})$. 
\end{remark}

\begin{figure}[t]
	\centering
        \includegraphics[
      width=\linewidth]{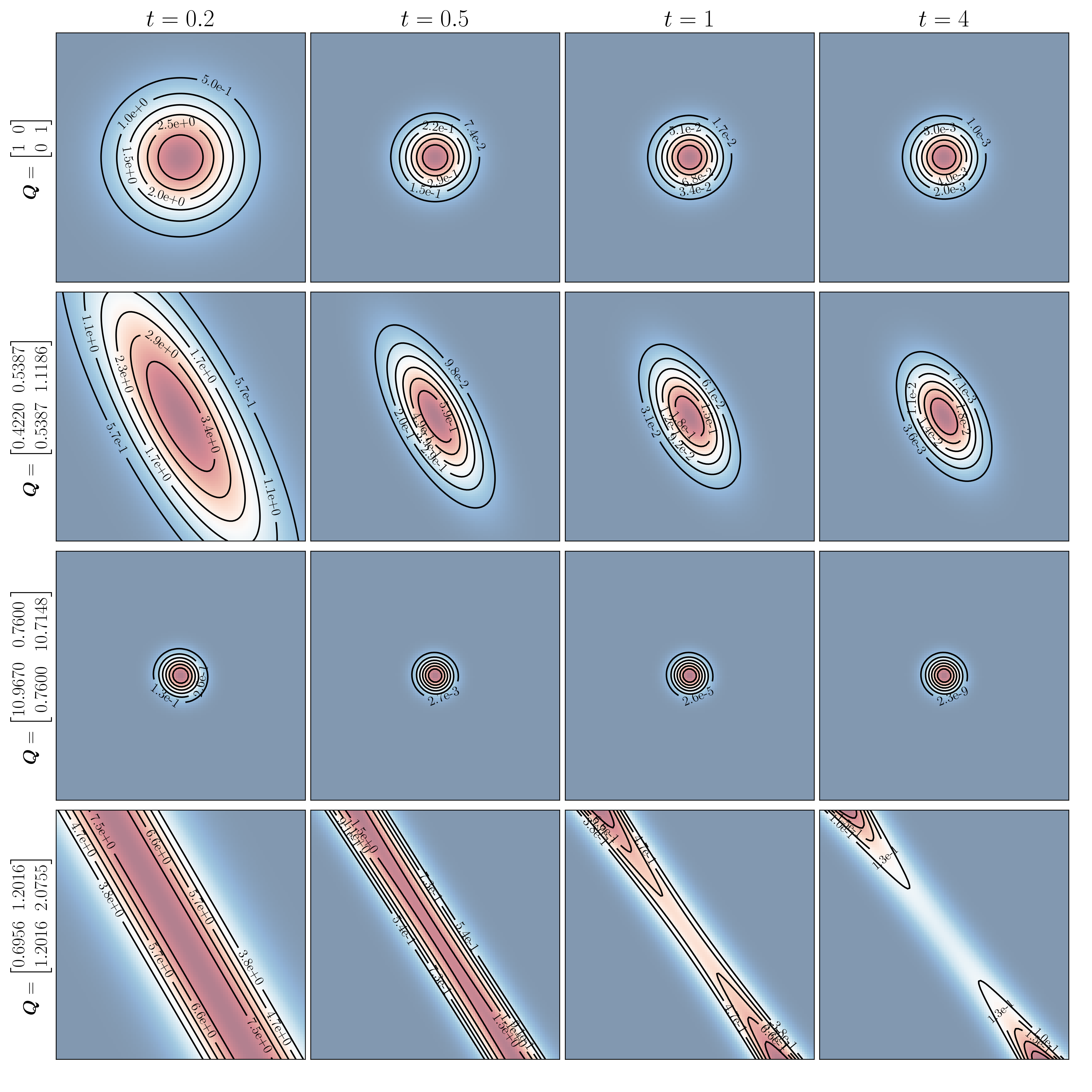}
        \caption{Solution to PDE \eqref {FactorPDEForward} in $n=2$ dimensions with $t_0 = 0$ and initial condition $\widehat{\varphi}_{0}(\cdot)=1$. All subfigures are over the spatial domain $\left[-7,7\right]^{2}$.}
\vspace*{-0.1in}	\label{fig:constant_initial_condition} 
\end{figure}


\section{Discussions}\label{sec:Discussions}
In this section, we discuss various implications of the results derived so far. We start by recording the basic properties of the kernel $\kappa_{+}$ and of the corresponding solution \eqref{PhiHatAsIntegral}.
\begin{proposition}[\textbf{Properties of the kernel and the solution}]\label{Prop:Properties}
     For $0\leq t_{0} < t < \infty$, given $\bm{Q}\succeq\bm{0}$, $\widehat{\varphi}_{0}(\cdot)$ Lebesgue measurable and everywhere positive, the kernel $\kappa_{+}$ and the solution \eqref{PhiHatAsIntegral}  satisfy:
    \begin{enumerate}
    \item[(i)] (\textbf{spatial symmetry}) $\kappa_{+}(t_0, \bm{y}, t, \bm{z}) =\kappa_{+}(t_0, \bm{z}, t, \bm{y})$,

    \item[(ii)] (\textbf{positivity}) $\widehat{\varphi}(t,\bm{x})>0$ for all $\bm{x}\in\mathbb{R}^{n}$, 
    
    \item[(iii)] (\textbf{asymptotic decay}) $\lim_{|\bm{x}|\rightarrow\infty} \widehat{\varphi}(t, \bm{x} )=\lim_{t\rightarrow\infty} \widehat{\varphi}(t, \bm{x} )=0$.
    
\end{enumerate}
\end{proposition}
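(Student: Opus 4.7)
\emph{Part (i) (spatial symmetry).} Leveraging Proposition~\ref{Proposition:MatrixM}, I would observe that $\bm{M}_{tt_{0}}$ in \eqref{Matrix_M_Def} has the block form $\begin{bmatrix} A & B \\ B & A \end{bmatrix}$ with $A := \sqrt{\bm{D}}\coth(2(t-t_{0})\sqrt{\bm{D}})$ and $B := -\sqrt{\bm{D}}\,{\rm{csch}}(2(t-t_{0})\sqrt{\bm{D}})$ both symmetric (diagonal, in fact). The block-swap permutation $\bm{P} := \begin{bmatrix} \bm{0} & \bm{I} \\ \bm{I} & \bm{0}\end{bmatrix}$ satisfies $\bm{P}^{\top}\bm{M}_{tt_{0}}\bm{P} = \bm{M}_{tt_{0}}$, so the quadratic form in \eqref{kernel_function} is invariant under $(\bm{y},\bm{z})\leftrightarrow(\bm{z},\bm{y})$. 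Since the prefactor does not depend on $\bm{y},\bm{z}$, this yields $\kappa_{++}(t_{0},\bm{y},t,\bm{z})=\kappa_{++}(t_{0},\bm{z},t,\bm{y})$. The heat kernel $\kappa_{0}$ is manifestly symmetric since it depends only on $|\bm{y}-\bm{z}|^{2}$, and symmetry of $\kappa_{+}$ follows from \eqref{semi_definite_kernel}.

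\emph{Part (ii) (positivity).} Strict positivity of the exponential in \eqref{kernel_function} is immediate from $\bm{M}_{tt_{0}} \succ \bm{0}$, supplied by Proposition~\ref{Proposition:MatrixM}(iii). The prefactor is a product of positive quantities because $\bm{D}\succ\bm{0}$ makes every diagonal entry of $\sinh(2(t-t_{0})\sqrt{\bm{D}})$ positive. Hence $\kappa_{++}>0$, $\kappa_{0}>0$, and therefore $\kappa_{+}>0$ everywhere. With the hypothesis $\widehat{\varphi}_{0}>0$, the integrand in \eqref{PhiHatAsIntegral} is strictly positive, giving $\widehat{\varphi}(t,\bm{x})>0$.

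\emph{Part (iii) (asymptotic decay).} The plan is to derive pointwise decay of the kernel in $\bm{x}$ and in $t$ separately, and pass the limit inside the integral \eqref{PhiHatAsIntegral} via dominated convergence. Positive-definiteness of $\bm{M}_{tt_{0}}$ yields the uniform bound $\kappa_{++}(t_{0},\bm{y},t,\bm{z}) \le C \exp\bigl(-\tfrac{1}{2}\lambda_{\min}(\bm{M}_{tt_{0}})(|\bm{y}|^{2}+|\bm{z}|^{2})\bigr)$, which decays exponentially as $|\bm{y}|=|\bm{V}\bm{x}|=|\bm{x}|\to\infty$ (orthogonality of $\bm{V}$). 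For the temporal limit, since $\sinh(2(t-t_{0})\sqrt{d_{i}})\sim\tfrac{1}{2}e^{2(t-t_{0})\sqrt{d_{i}}}$ for each positive eigenvalue $d_{i}$ of $\bm{D}$, the prefactor $(\det\sinh(2(t-t_{0})\sqrt{\bm{D}}))^{-1/2}$ decays exponentially in $t$, while $\bm{M}_{tt_{0}}\to\diag(\sqrt{\bm{D}},\sqrt{\bm{D}})$ keeps the exponent bounded for fixed $\bm{y},\bm{z}$; the heat-kernel factor in \eqref{semi_definite_kernel} contributes additional polynomial decay in $t$ and pointwise Gaussian decay in the zero-eigenvalue directions. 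Dominated convergence then delivers both limits.

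\emph{Main obstacle.} The principal difficulty is justifying the interchange of limit and integral in (iii), which needs a dominator integrable against $\widehat{\varphi}_{0}$. The positive-eigenvalue directions furnish a Gaussian dominator that controls any $\widehat{\varphi}_{0}$ of at most exponential growth; the zero-eigenvalue directions, carrying only a heat-kernel factor, yield only pointwise (not uniform in $\bm{x}$) decay and can obstruct spatial decay without a mild additional growth/integrability condition on $\widehat{\varphi}_{0}$ (indeed $\bm{Q}=\bm{0}$ with $\widehat{\varphi}_{0}\equiv 1$ forces $\widehat{\varphi}\equiv 1$). I therefore expect the intended regime for (iii) to reduce to $\bm{Q}\succ\bm{0}$ or to a decay condition on $\widehat{\varphi}_{0}$ along $\ker(\bm{Q})$.
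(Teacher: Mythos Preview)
Your arguments for (i) and (ii) are essentially the paper's, only spelled out in more detail; the paper simply states that symmetry of $\kappa_{++}$ and $\kappa_{0}$ is immediate from \eqref{kernel_function} and \eqref{defHeatKernel}, and positivity is inherited by the integrand.

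For (iii) the overall strategy also coincides (dominated convergence applied to \eqref{PhiHatAsIntegral}), but the dominators differ. The paper bounds the exponential factors in $\kappa_{+}$ crudely by $1$ (using $\bm{M}_{tt_{0}}\succ\bm{0}$) to obtain
\[
\kappa_{+}(t_{0},\bm{Vx},t,\bm{z})\,\widehat{\varphi}_{0}(\bm{V}^{\top}\bm{z})\;\le\;\zeta(t,t_{0})\,\widehat{\varphi}_{0}(\bm{V}^{\top}\bm{z}),
\]
with $\zeta(t,t_{0})$ the $\bm{x},\bm{z}$--independent prefactor, and then invokes DCT. Your route instead keeps the Gaussian decay in $\bm{z}$ coming from $\lambda_{\min}(\bm{M}_{tt_{0}})>0$ on the $\kappa_{++}$ block, which is sharper: it yields an integrable dominator under a mere growth condition on $\widehat{\varphi}_{0}$, whereas the paper's dominator $\zeta(t,t_{0})\widehat{\varphi}_{0}$ needs $\widehat{\varphi}_{0}\in L^{1}(\mathbb{R}^{n})$, a hypothesis that is \emph{not} among those listed.

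Your ``main obstacle'' is therefore not a weakness of your argument relative to the paper's---it is a genuine gap in the proposition as stated, and the paper's proof glosses over exactly the point you flag. Your counterexample is correct: with a zero eigenvalue of $\bm{Q}$ and $\widehat{\varphi}_{0}\equiv 1$, the heat-kernel factor integrates to $1$ along that eigen-direction, so $\widehat{\varphi}(t,\bm{x})$ is constant there and the spatial limit in (iii) fails. (The paper itself remarks, in discussing the last row of Fig.~\ref{fig:constant_initial_condition}, that this direction ``evolves toward uniform''.) So your diagnosis---that (iii) as written really requires either $\bm{Q}\succ\bm{0}$ or an integrability/decay condition on $\widehat{\varphi}_{0}$ along $\ker\bm{Q}$---is on target.
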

\begin{proof}
(i) Follows from the spatial symmetries of the kernels \eqref{defHeatKernel} and \eqref{kernel_function}.

\noindent(ii) Since \eqref{defHeatKernel} and \eqref{kernel_function} are both positive, so is $\kappa_{+}$ in \eqref{semi_definite_kernel}. Then, $\widehat{\varphi}_{0}$ being everywhere positive, \eqref{PhiHatAsIntegral} is positive too.

\noindent(iii) From Proposition \ref{Proposition:MatrixM}(iii), $\bm{M}\succ\bm{0}$ and hence the exponential term in \eqref{kernel_function} has negative definite argument. The same is true for the heat kernel \eqref{defHeatKernel}. Thus the product of the exponential terms in \eqref{semi_definite_kernel} is bounded in $\left[0,1\right]$. So $\vert \kappa_{+}(t_{0}, \bm{Vx}, t, \bm{z}) \:\widehat{\varphi}_0\left(\bm{V}^{\top}\bm{z}\right)\vert = \kappa_{+}(t_{0}, \bm{Vx}, t, \bm{z}) \:\widehat{\varphi}_0\left(\bm{V}^{\top}\bm{z}\right) \leq \zeta(t,t_0)\widehat{\varphi}_0\left(\bm{V}^{\top}\bm{z}\right)$, where $\zeta(t,t_0):=(4\pi(t-t_0))^{-p/2}\allowbreak(2\pi)^{-(n-p)/2}\left(\det(\bm{D}_{\left[i_{1}:i_{n-p}\right]})\right)^{1/4}/\sqrt{\det\left(\sinh\left(2(t-t_0)\sqrt{\bm{D}_{\left[i_{1}:i_{n-p}\right]}}\right)\right)}$. Because $\widehat{\varphi}_{0}$ is Lebesgue measurable (given), applying the dominated convergence theorem separately for $\vert\bm{x}\vert\rightarrow\infty$ with finite $t$, and for $t\rightarrow\infty$, concludes the proof.\qedsymbol
\end{proof}
\begin{figure}[t]
	\centering
        \includegraphics[
       width=\linewidth]{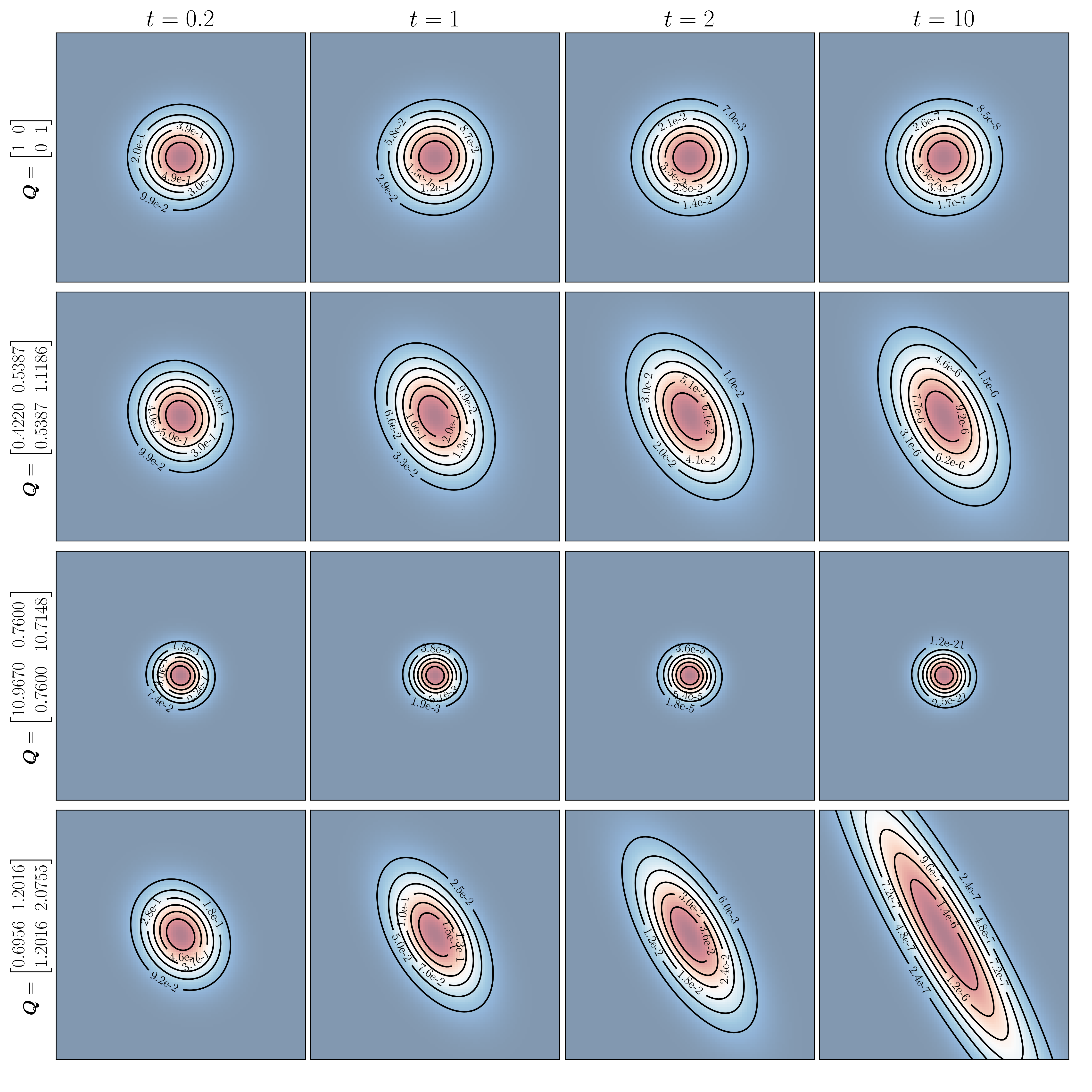}
        \caption{Solution to PDE \eqref {FactorPDEForward} in $n=2$ dimensions with $t_0 = 0$ and initial condition $\widehat{\varphi}_{0}(\cdot)=\frac{1}{ 2\pi}e^{-\frac{1}{2}\|\cdot\|^2}$. All subfigures are over the spatial domain $\left[-5,5\right]^{2}$.}
        \label{fig:normal_distribution_initial_condition} 
\end{figure}

\begin{figure}[t]
	\centering
        \includegraphics[
       width=\linewidth]{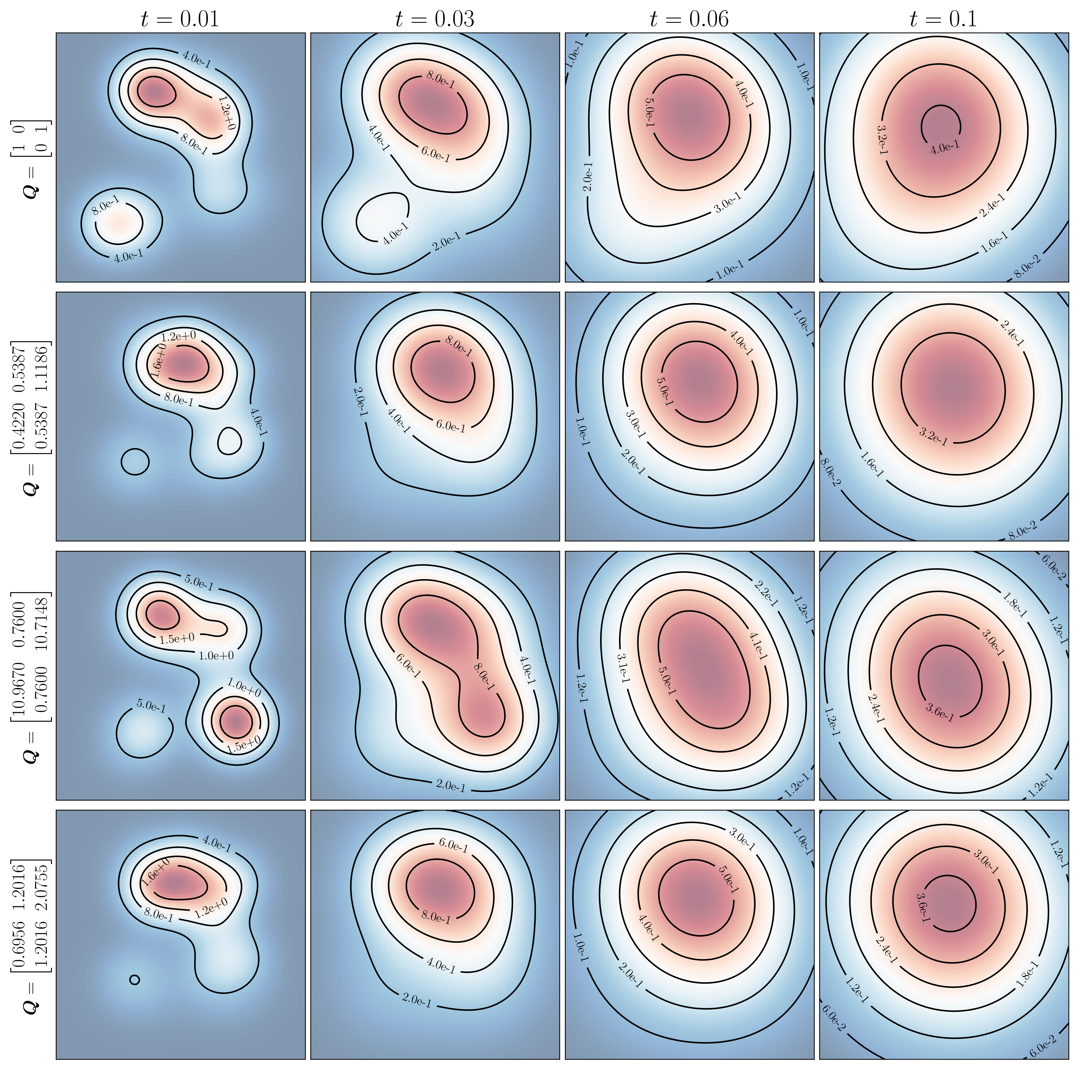}
        \caption{Solution to PDE \eqref {FactorPDEForward} in $n=2$ dimensions with $t_0 = 0$ and initial condition $\widehat{\varphi}_{0}(\cdot)=\rho_{\infty} $. All subfigures are over the spatial domain $\left[-0.25,1.25\right]^{2}$.}
        \label{fig:non_gaussian_initial_condition} 
\end{figure}
In Fig. \ref{fig:constant_initial_condition}, Fig. \ref{fig:normal_distribution_initial_condition} and Fig. \ref{fig:non_gaussian_initial_condition}, we illustrate the solution $\widehat{\varphi}(t,\bm{x})$ in \eqref{PhiHatAsIntegral} for $n=2$ dimensions with $t_0=0$, initial conditions $\widehat{\varphi}_{0}(\cdot)=1$, $\widehat{\varphi}_{0}(\cdot)=\mathcal{N}(\bm{0},\bm{I})$, {\black{and $\widehat{\varphi}_{0}(\cdot)\propto\exp\left(-f(x_1,x_2)/35\right)$ where $f(x_1,x_2):=\left ( (10x_1-5)^2+10x_2-16\right )^2+\left ( 10x_1-12+(10x_2-5)^2 \right)^2$, $(x_1,x_2)\in[0,1]^2$, respectively. We point out that the last $\widehat{\varphi}_0$ is of non-Gaussian Gibbs-type \cite[Sec. V-C]{halder2020hopfield}, and $f$ there is a scaled Himmelblau function, often used as a benchmark in nonconvex optimization \cite{himmelblau2018applied}.}} 

In these figures, the columns correspond to different times and the rows correspond to different problem data: $\bm{Q}\succeq \bm{0}$. These results agree with the properties in Proposition \ref{Prop:Properties}, and were obtained by using the derived kernels in Theorem \ref{Thm:Kernel} and Theorem \ref{SemiPosDefKernel}. For the three cases of $\widehat{\varphi}_{0}$ reported here, we analytically performed the integration in \eqref{PhiHatAsIntegral} using Lemma \ref{Lemma:CentralIdentityQFT}.

Specifically, for $t_0=0$, $\widehat{\varphi}_{0}(\cdot)=1$ and $\bm{Q}\succ\bm{0}$, \eqref{EtaHatWithPosDefKernel}-\eqref{kernel_function} give
\begin{align}
&\widehat{\eta}(t,\bm{y})\big\vert_{\widehat{\varphi}_{0} = 1} = \frac{(\det(\bm{D}))^{1/4}}{(2\pi)^{\frac{n}{2}}\sqrt{\det(\sinh(2t\sqrt{\bm{D}}))}} \exp\!\left(\! 
 -\frac{1}{2} \bm{y}^{\top} \bm{D}^{\frac{1}{2}}\coth\left(2t\sqrt{\bm{D}}\right)\bm{y}\!\right)\label{etahatICunity}\\
 &\!\int_{\mathbb{R}^n}\!\exp\!\left(\!-\frac{1}{2}\bm{z}^{\top}\bm{D}^{\frac{1}{2}}\coth\left(2t\sqrt{\bm{D}}\right)\bm{z}  +\bm{y}^{\top} \bm{D}^{\frac{1}{2}}\text{csch}\left(2t\sqrt{\bm{D}}\right)\bm{z}\!\right)\differential\bm{z}\nonumber\\
&= \dfrac{1}{\sqrt{\det\left(\cosh\left(2t\sqrt{\bm{D}}\right)\right)}}\!\exp\!\left(\!-\frac{1}{2}\bm{y}^{\top}\bm{D}^{\frac{1}{2}}\tanh\left(2t\sqrt{\bm{D}}\right)\bm{y}\!\right),\nonumber
\end{align}
where the last equality follows from invoking Lemma \ref{Lemma:CentralIdentityQFT} for the integral in the preceding line. The corresponding 
\begin{align}
\widehat{\varphi}(t,\bm{x})\big\vert_{\widehat{\varphi}_{0} = 1} = \widehat{\eta}\left(t,\bm{y}=\bm{V}^{\top}\bm{x}\right)\big\vert_{\widehat{\varphi}_{0} = 1} &= \dfrac{\!\exp\!\left(\!-\frac{1}{2}\bm{x}^{\top}\bm{V}\bm{D}^{\frac{1}{2}}\tanh\left(2t\sqrt{\bm{D}}\right)\bm{V}^{-1}\bm{x}\!\right)}{\sqrt{\det\left(\cosh\left(2t\sqrt{\bm{D}}\right)\right)}}\label{phihatICunity}\\
&=\dfrac{\!\exp\!\left(\!-{\frac{1}{4}}\bm{x}^{\top}\sqrt{2\bm{Q}}\tanh\left(t\sqrt{2\bm{Q}}\right)\bm{x}\!\right)}{\sqrt{\det\left(\cosh\left(t\sqrt{2\bm{Q}}\right)\right)}},
\nonumber 
\end{align}
where the last equality used \eqref{MatrixFunction}. For sanity check, we verify from \eqref{phihatICunity} that $\widehat{\varphi}(t=0,\bm{x})\big\vert_{\widehat{\varphi}_{0} = 1} = 1$, as expected. Also, for $\bm{D}\downarrow\bm{0}$ (heat kernel limit), \eqref{phihatICunity} returns unity which is intuitive since the heat kernel leaves constant function invariant. For the results in Fig. \ref{fig:constant_initial_condition}, we used \eqref{phihatICunity} to illustrate the effect of varying $\bm{Q}\succeq\bm{0}$. In particular, the last row of Fig. \ref{fig:constant_initial_condition} used Theorem \ref{SemiPosDefKernel} to account for a zero eigenvalue. That row shows the joint effect of one eigen-direction evolving toward uniform (diffusion due to $\kappa_0$) and another eigen-direction evolving as per $\kappa_{++}$.

\begin{figure}[t]
  \centering
  \includegraphics[width=0.56\textwidth]{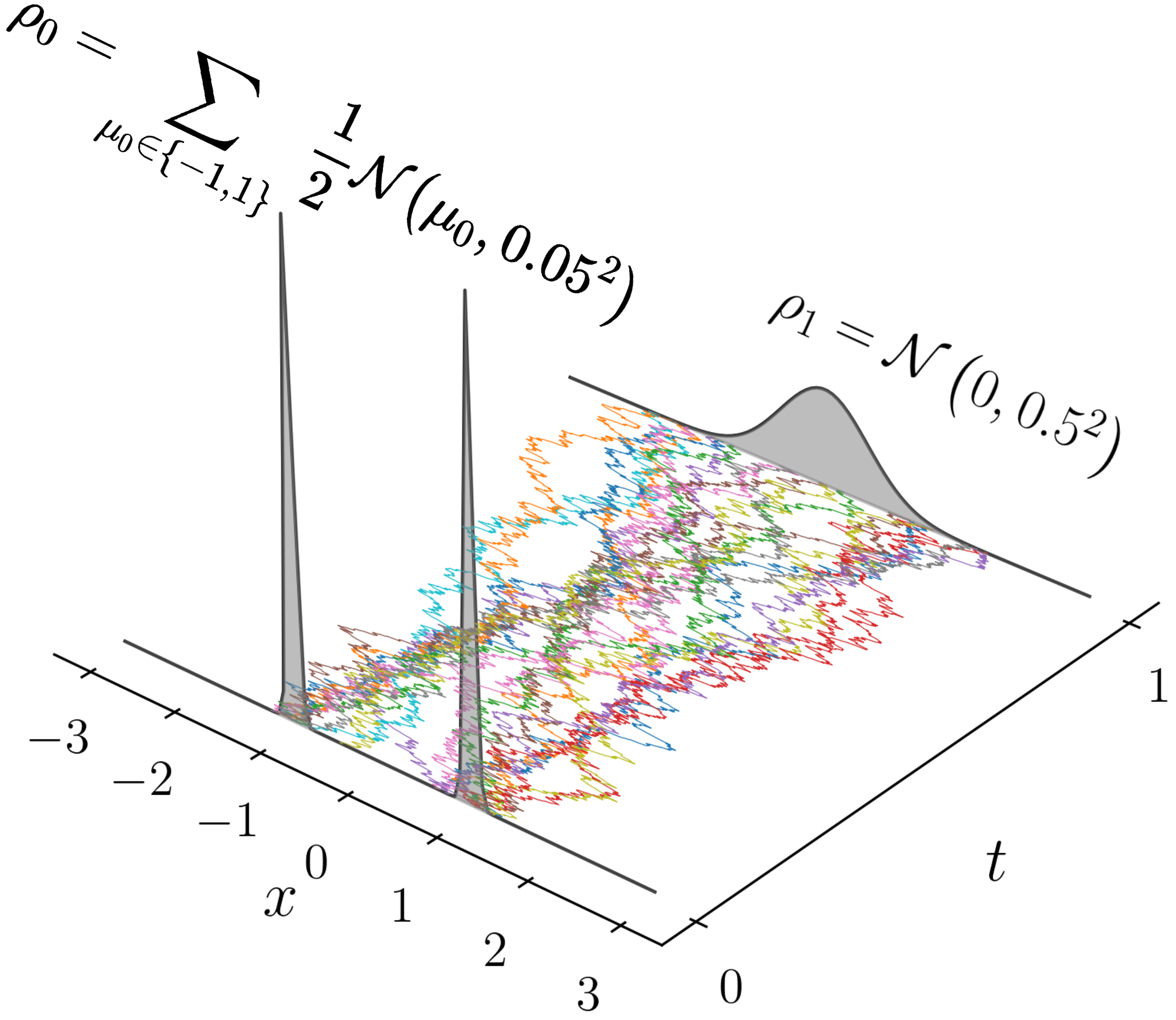}
  \caption{{\small{Optimally controlled sample paths for 1D SB with state cost $\frac{1}{2}Qx^2$, $Q=2$, shown with endpoint endpoint PDFs $\rho_0 = \sum_{\mu_0\in\{-1,1\}}\frac{1}{2}\mathcal{N}(\mu_0,0.05^2)$, $\rho_1=\mathcal{N}(0,0.5^2)$ and time horizon $[t_0,t_1]=[0,1]$.}}}
  \label{Fig:1dSBPregualrized3dstacked}
\end{figure}

Likewise, for $t_0=0$, $\widehat{\varphi}_{0}(\cdot)=\mathcal{N}\left(\bm{0},\bm{I}\right)$ and $\bm{Q}\succ\bm{0}$, \eqref{EtaHatWithPosDefKernel}-\eqref{kernel_function} give
\begin{align}
&\widehat{\eta}(t,\bm{y})\big\vert_{\widehat{\varphi}_{0} = \mathcal{N}\left(\bm{0},\bm{I}\right)} = \frac{(\det(\bm{D}))^{1/4}}{(2\pi)^{\frac{n}{2}}\sqrt{\det(\sinh(2t\sqrt{\bm{D}}))}} \exp\!\left(\! 
 -\frac{1}{2} \bm{y}^{\top} \bm{D}^{\frac{1}{2}}\coth\left(2t\sqrt{\bm{D}}\right)\bm{y}\!\right)\label{etahatICstandardnormal}\\
 &\!\int_{\mathbb{R}^n}\!\exp\!\left(\!-\frac{1}{2}\bm{z}^{\top}\bigg\{\bm{D}^{\frac{1}{2}}\coth\left(2t\sqrt{\bm{D}}\right)+\bm{I}\bigg\}\bm{z}  +\bm{y}^{\top} \bm{D}^{\frac{1}{2}}\text{csch}\left(2t\sqrt{\bm{D}}\right)\bm{z}\!\right)\differential\bm{z}\nonumber\\
&=\hspace{-3pt}\frac{(\det{\bm{D}})^{\frac{1}{4}}}{\sqrt{\det\left(\sqrt{\bm{D}}\cosh\left(2t\sqrt{\bm{D}}\right)+\sinh\left(2t\sqrt{\bm{D}}\right)\right)}}\!\exp\!\left(\!-\frac{1}{2}\bm{y}^{\top}\bm{D}^{\frac{1}{2}} \bigg\{\coth\left(2t\sqrt{\bm{D}}\right)\right.\nonumber\\
   &\hspace*{29pt}\left. -\:{\rm{csch}}\left(2t\sqrt{\bm{D}}\right)\left(\sqrt{\bm{D}}\coth\left(2t\sqrt{\bm{D}}\right)+\bm{I}\right)^{-1}\sqrt{\bm{D}}\:{\rm{csch}}\left(2t\sqrt{\bm{D}}\right)\bigg\}\bm{y}\right),\nonumber
\end{align}
where, once again, the last equality follows from invoking Lemma \ref{Lemma:CentralIdentityQFT} for the integral in the preceding line. The corresponding 
\begin{align}
&\widehat{\varphi}(t,\bm{x})\big\vert_{\widehat{\varphi}_{0} = \mathcal{N}\left(\bm{0},\bm{I}\right)} = \widehat{\eta}\left(t,\bm{y}=\bm{V}^{\top}\bm{x}\right)\big\vert_{\widehat{\varphi}_{0} = \mathcal{N}\left(\bm{0},\bm{I}\right)}\label{phihatICstandardnormal}\\
   &=\hspace{-3pt}{\black{\frac{2^{-n/4}(\det\bm{Q})^{\frac{1}{4}}}{\sqrt{\det\left(\sqrt{2\bm{Q}}\cosh\left(t\sqrt{2\bm{Q}}\right)/2 +\sinh\left(t\sqrt{2\bm{Q}}\right)\right)}}\!\exp\!\left(\!-\frac{1}{4}\bm{x}^{\top}\sqrt{2\bm{Q}} \bigg\{\coth\left(t\sqrt{2\bm{Q}}\right)\right.}}\nonumber\\
   &\hspace*{29pt}{\black{\left. -\:{\rm{csch}}\left(t\sqrt{2\bm{Q}}\right)\left(\frac{1}{4}\sqrt{2\bm{Q}}\coth\left(t\sqrt{2\bm{Q}}\right)+\bm{I}\right)^{-1}\sqrt{2\bm{Q}}\:{\rm{csch}}\left(t\sqrt{2\bm{Q}}\right)\bigg\}\bm{x}\right)}},\nonumber
\end{align}
where the last equality is due to \eqref{MatrixFunction}. For the results in Fig. \ref{fig:normal_distribution_initial_condition}, we used \eqref{phihatICstandardnormal} to illustrate the effect of varying $\bm{Q}\succeq\bm{0}$. As before, the last row of Fig. \ref{fig:normal_distribution_initial_condition} used Theorem \ref{SemiPosDefKernel} to account for a zero eigenvalue.

{\black{Likewise, the results in Fig. \ref{fig:non_gaussian_initial_condition} show the transient action of the kernel on non-Gaussian $\widehat{\varphi}_0$.}}

We conclude with a simulation result that brings together the ideas to solve the SB problem with state cost-to-go. In Fig. \ref{Fig:1dSBPregualrized3dstacked}, we show the closed loop optimally controlled sample paths together with endpoint PDFs $\rho_0 = \sum_{\mu_0\in\{-1,1\}}\frac{1}{2}\mathcal{N}(\mu_0,0.05^2)$, $\rho_1=\mathcal{N}(0,0.5^2)$ for an 1D SB with state cost $\frac{1}{2}Qx^2$ with $Q=2$ and time horizon $[t_0,t_1]=[0,1]$. This result is obtained by performing the dynamic Sinkhorn recursions shown in Fig. \ref{fig:FixedPointRecursion} wherein we solved the forward-backward IVPs for the PDEs \eqref{FactorPDEs} as in Remark \ref{remark:FromEtaHatBackToPhiHat}, which was in turn made possible by the kernel $\kappa_{++}$ deduced in Theorem \ref{Thm:Kernel}.

\noindent\textbf{Future directions.} We mention several directions in which our results could be extended. It will be of interest to extend our results for time-varying weights in quadratic state-cost-to-go, i.e., by considering matricial trajectory $\bm{Q}(t)$ that is continuous and bounded in $t$, and $\succeq \bm{0}$ for each $t\in[t_0,t_1]$. Such time-varying weights are straightforward to account for in classical LQ optimal control but it is not apparent how to generalize our Hermite polynomial-related developments for this case. From a control-theoretic viewpoint, it will also be of interest to solve problem \eqref{LambertSBP} with the same quadratic state cost-to-go as in this work but with prior dynamics $\differential\bm{x}_t=\bm{A}(t)\bm{x}_t \differential t + \sqrt{2}\bm{B}(t)\differential\bm{w}_t$ as opposed to the case $\left(\bm{A}(t),\bm{B}(t)\right)=\left(\bm{0},\bm{I}\right)$ that we considered here following classical SB. {\black{In a different vein, it will be of interest to find the worst-case contraction coefficient \cite{teter2023contraction} for the recursion in Fig. \ref{fig:FixedPointRecursion} as function of the matrix $\bm{Q}$, using the derived kernel.}} These will be explored in our future work.

\appendix
\section{Proofs}\label{AppSec:Proofs}

\subsection{Proof for Theorem \ref{ThmLSBPExistenceUniqueness}} \label{Proof_of_ThmLSBPExistenceUniqueness}
The main idea is to derive a stochastic calculus of variations formulation, i.e., an analogue of \eqref{KLmin} for measurable nonzero $q$. To this end, consider $\mathbb{P}\in\mathcal{M}(\Omega)$ generated by the It\^{o} diffusion $\differential\bm{x}_t^{\bm{u}}=\bm{u}(t,\bm{x}_t^{\bm{u}})\differential t + \sqrt{2}\differential\bm{w}_t$. Recall that $\bm{u}\in\mathcal{U}$, the space of finite energy Markovian controls, i.e.,
\begin{align}
\mathcal{U}:= \{\bm{u}:[t_0,t_1]\times\mathbb{R}^{n}\mid \vert \bm{u}\vert^2 < \infty\}.
\label{FeasibleControls}    
\end{align}

Letting $\mathbb{P}_{0},\mathbb{W}_{0}$ be the distributions of $\bm{x}_0^{\bm{u}}$ under $\mathbb{P}$ and $\mathbb{W}$ respectively, we use Girsanov's theorem \cite[Thm. 8.6.6]{oksendal2013stochastic}, \cite[Ch. 3.5]{karatzas2012brownian} to find that the Radon-Nikodym derivative $\dfrac{\differential\mathbb{P}}{\differential\mathbb{W}} = \dfrac{\differential\mathbb{P}_{0}}{\differential\mathbb{W}_{0}} \exp\left(\displaystyle\int_{t_0}^{t_1}\dfrac{\bm{u}}{\sqrt{2}}\differential\bm{w}_t + \displaystyle\int_{t_0}^{t_1}\dfrac{1}{2}\dfrac{\vert\bm{u}\vert^2}{2}\differential t\right)$ $\mathbb{P}$ a.s. Therefore,
\begin{align}
\mathbb{E}_{\mathbb{P}}\left[\log\dfrac{\differential\mathbb{P}}{\differential\mathbb{W}}\right] = \mathbb{E}_{\mathbb{P}}\left[\log\dfrac{\differential\mathbb{P}_{0}}{\differential\mathbb{W}_{0}}\right] +\dfrac{1}{\sqrt{2}}\mathbb{E}_{\mathbb{P}}\left[\displaystyle\int_{t_0}^{t_1}\bm{u}\differential\bm{w}_t\right] + \dfrac{1}{4}\displaystyle\int_{t_0}^{t_1}\mathbb{E}_{\mathbb{P}}\left[\vert\bm{u}\vert^2\right]\differential t,
\label{ExpectedLog}
\end{align}
where we used the Fubini-Tonelli theorem for the last term.

Now we argue that the second summand in the RHS of \eqref{ExpectedLog} equals zero. This is because we know from \eqref{FeasibleControls} that $\mathbb{E}_{\mathbb{P}}\left[\int_{t_0}^{t_1}\vert\bm{u}\vert^2\differential t\right] < \infty$, so for any $t\geq t_{0}$, the process $\int_{t_0}^{t}\bm{u}\differential w_{\tau}$ is a martingale with constant expected value $\mathbb{E}_{\mathbb{P}}\left[\int_{t_0}^{t}\bm{u}\differential w_{\tau}\right] = \mathbb{E}_{\mathbb{P}}\left[\int_{t_0}^{t_0}\bm{u}\differential w_{\tau}\right]=0$. Thus, \eqref{ExpectedLog} simplifies to
\begin{align}
\mathbb{E}_{\mathbb{P}}\left[\log\dfrac{\differential\mathbb{P}}{\differential\mathbb{W}}\right] = \mathbb{E}_{\mathbb{P}}\left[\log\dfrac{\differential\mathbb{P}_{0}}{\differential\mathbb{W}_{0}}\right] + \dfrac{1}{4}\displaystyle\int_{t_0}^{t_1}\mathbb{E}_{\mathbb{P}}\left[\vert\bm{u}\vert^2\right]\differential t.
\label{ExpectedLogSimplified}
\end{align}

To account for possibly unbounded $q(t,\cdot)$, define the positive and negative parts of $q(t,\cdot)$ as $q^{+}(t,\cdot):=\max\{q(t,\cdot),0\}$ and $q^{-}(t,\cdot):=\max\{-q(t,\cdot),0\}$, respectively. Define stopping time $\tau_t := \inf\{\widetilde{t}>t \mid \int_{t}^{\tilde{t}}q^{-}(s,\cdot)\differential s=\infty\}$ with the convention that $\tau_t=\infty$ when there is no such $\widetilde{t}$. Following \cite[Sec. 4]{aebi1992large}, we next define $\mathcal{X}:=\{(t,\bm{x}_t^{\bm{u}})\in[t_0,t_1]\times\mathbb{R}^{n}\mid \mathbb{P}(t_1<\tau_t)>0\}$, and for any nonnegative measurable $F(\cdot)$, let
\begin{align*}
\mathbb{P}^{-}_{t}(F)&:=\mathbb{P}\left[\exp\left(\frac{1}{2}\int_{t}^{t_1}\!\!q^{-}(s,\bm{x}^{\bm{u}}_{s})\differential s\!\right)F(\cdot)\bm{1}_{\{t_{1}<\tau_{t}\}}\right] \nonumber\\
&= \!\int\!\exp\left(\frac{1}{2}\int_{t}^{t_1}\!\!\!q^{-}(s,\bm{x}^{\bm{u}}_{s})\differential s\!\right)\!F(\cdot)\bm{1}_{\{t_{1}<\tau_{t}\}}\differential\mathbb{P}.
\end{align*}
Clearly, $\mathbb{P}_{t}^{-}(1)>0$ $\forall(t,\bm{x}_t^{\bm{u}})\in\mathcal{X}$. Assuming $\mathbb{P}_{t}^{-}\left(\exp\left(-\frac{1}{2}\int_{t}^{t_1}q^{+}(s,\bm{x}_s^{\bm{u}})\differential s\right)\right) \allowbreak= \mathbb{P}\left(\exp\left(-\frac{1}{2}\int_{t}^{t_1}q(s,\bm{x}_s^{\bm{u}})\differential s\right)\bm{1}_{\{t_1<\tau_t\}}\allowbreak\right)\allowbreak< \infty$
$\forall t$ and a.e. $\bm{x}_t^{\bm{u}}$ in $\mathcal{X}$, we then define the reweighing measure
\begin{align*}
\mathbb{P}^{q}(F) &:= \mathbb{P}^{-}_{t_0}\left(\exp\left(-\dfrac{1}{2}\int_{t_{0}}^{t_1}q^{+}(s,\bm{x}_s^{\bm{u}})\differential s\right)F(\cdot)\right)\\
&= \mathbb{P}\left(\exp\left(-\dfrac{1}{2}\int_{t_{0}}^{t_1}q(s,\bm{x}_s^{\bm{u}})\differential s\right)\bm{1}_{\{t_1<\tau_{t_{0}}\}}F(\cdot)\right).
\end{align*}
Taking $\mathbb{P}^{q}(1)\mathbb{W}/Z\in\mathcal{M}(\Omega)$ as the reference measure, where the normalization constant $Z:= \int_{\Omega}\differential\left(\mathbb{P}^{q}(1)\mathbb{W}\right)$, we then find 
\begin{align}
&D_{\rm{KL}}\left(\mathbb{P}\parallel\mathbb{P}^{q}(1)\mathbb{W}/Z\right) = \log Z + \mathbb{E}_{\mathbb{P}}\!\left[\log\dfrac{\differential\mathbb{P}}{\differential\mathbb{W}}\right] + \mathbb{E}_{\mathbb{P}}\!\left[\!\left(\dfrac{1}{2}\int_{t_0}^{t_1}\!\!\!q(s,\bm{x}_s^{\bm{u}})\differential s\right)\!\bm{1}_{\{t_1<\tau_{t_{0}}\}}\!\right].
\label{KLdivRefMeasure}
\end{align}
Using \eqref{ExpectedLogSimplified}, the identity \eqref{KLdivRefMeasure} simplifies to 
\begin{align}
&D_{\rm{KL}}\left(\mathbb{P}\parallel\mathbb{P}^{q}(1)\mathbb{W}/Z\right) \nonumber\\
&=\log Z + \mathbb{E}_{\mathbb{P}}\!\left[\log\dfrac{\differential\mathbb{P}_0}{\differential\mathbb{W}_0}\right] + \frac{1}{2}\int_{t_0}^{t_1}\mathbb{E}_{\mathbb{P}}\left[\frac{1}{2}\vert\bm{u}\vert^2 + q(t,\bm{x}_t^{\bm{u}})\bm{1}_{\{t_1<\tau_{t_{0}}\}}\right]\differential t.
\label{SBbbjectiveAsKL}
\end{align}
Recall $\Pi_{01}$ as in Sec. \ref{sec:Intro}, and notice that the term $\log Z + \mathbb{E}_{\mathbb{P}}\!\left[\log\dfrac{\differential\mathbb{P}_0}{\differential\mathbb{W}_0}\right]$ appearing in the RHS of \eqref{SBbbjectiveAsKL} is constant over $\Pi_{01}$. Therefore, the $\arg\inf$ of $$\int_{t_0}^{t_1}\mathbb{E}_{\mathbb{P}}\left[\frac{1}{2}\vert\bm{u}\vert^2 + q(t,\bm{x}_t^{\bm{u}})\bm{1}_{\{t_1<\tau_{t_{0}}\}}\right]\differential t,$$
is the same as that of $D_{\rm{KL}}\left(\mathbb{P}\parallel\mathbb{P}^{q}(1)\mathbb{W}/Z\right)$ over $\Pi_{01}$.

However, the map $\mathbb{P}\mapsto D_{\rm{KL}}\left(\mathbb{P}\parallel\mathbb{Q}\right)$ is strictly convex for any fixed $\mathbb{Q}$. So the existence-uniqueness for the $\arg\inf$ are guaranteed. Notice that the conclusion holds irrespective of $\bm{1}_{\{t_1<\tau_{t_{0}}\}}$ being zero or one. Also note that if $q(t,\cdot)$ is bounded for all $t\in[t_0,t_1]$, then the proof can be simplified by eschewing the stopping time related constructs.\qedsymbol


\subsection{Proof for Theorem \ref{ThmHopfColeReactionDiffusion}} \label{Proof_of_ThmHopfColeReactionDiffusion}
Denoting $\psi_{t}\in\mathcal{C}^{1,2}\left([t_0,t_1];\mathbb{R}^{n}\right)$ to be suitable Lagrange multiplier, the Lagrangian for problem \eqref{LambertSBP} is the functional 
\begin{align}
    \int_{\mathbb{R}^n}\int_{t_0}^{t_1}\left( \left(\frac{1}{2} \vert \bm{u}\vert^2 + q(\bm{x}_t^{\bm{u}}) \right) \rho_t + \psi_t\times \left( \frac{\partial \rho_t}{\partial t} + \nabla \cdot (\rho_t \bm{u}) - \Delta \rho_t \right) \right)\differential t\:\differential\bm{x}_t^{\bm{u}}.
    \label{Langrangian_orginal}
\end{align}
We apply the Fubini-Tonelli Theorem to switch the order of the integration and integrate by parts w.r.t. $t$ to obtain
\begin{align*}
    \int_{t_0}^{t_1} \int_{\mathbb{R}^{n}} \psi_t \frac{\partial \rho_t}{\partial t}\:\differential\bm{x}_t^{\bm{u}}\:\differential t = \int_{\mathbb{R}^{n}} \left(\psi_{1}(\cdot)\rho_{1}(\cdot)-\psi_{0}(\cdot)\rho_0(\cdot)\right)\differential(\cdot) - \int_{t_0}^{t_1} \int_{\mathbb{R}^{n}}\rho_t \frac{\partial \psi_t}{\partial t}\:\differential\bm{x}_t^{\bm{u}}\:\differential t.
\end{align*}
Assuming the limits at $|\bm{x}^{\bm{u}}_{t}|\rightarrow\infty$ are zero, and integrating by parts w.r.t. $\bm{x}_t^{\bm{u}}$, we have
\begin{align*}
&\int_{t_0}^{t_1} \int_{\mathbb{R}^{n}}\psi_t\left(\nabla\cdot (\rho_t \bm{u}) - \Delta \rho_t\right)\differential\bm{x}_t^{\bm{u}}\:\differential t \nonumber\\
&= -\int_{t_0}^{t_1} \int_{\mathbb{R}^{n}}\langle\nabla\psi_t, \bm{u}\rangle\rho_t\differential\bm{x}_t^{\bm{u}}\differential t + \int_{t_0}^{t_1} \int_{\mathbb{R}^{n}}\langle\nabla\rho_t,\nabla\psi_t\rangle\differential\bm{x}_t^{\bm{u}}\differential t \\ 
&= -\int_{t_0}^{t_1} \int_{\mathbb{R}^{n}}\langle\nabla\psi_t, \bm{u}\rangle\rho_t\differential\bm{x}_t^{\bm{u}}\differential t - \int_{t_0}^{t_1} \int_{\mathbb{R}^{n}}\rho_t\Delta\psi_t \differential\bm{x}_t^{\bm{u}}\:\differential t.
\end{align*}

We can therefore rewrite \eqref{Langrangian_orginal} as
\begin{align}               \int_{t_0}^{t_1}\int_{\mathbb{R}^{n}}\left(\dfrac{1}{2}\vert\bm{u}\vert^{2} + q(\bm{x}_t^{\bm{u}}) - \dfrac{\partial\psi_t}{\partial t} -  \langle\nabla\psi_t,\bm{u}\rangle - 
\Delta\psi_t \right) \rho_t \:\differential\bm{x}_t^{\bm{u}}\:\differential t.
\label{Lagrangian_new}
\end{align}
By pointwise minimization of \eqref{Lagrangian_new} with respect to $\bm{u}$, we determine that the optimal control is 
\begin{align}
    \bm{u}^{\rm{opt}} = \nabla \psi_t.
    \label{inputSBP}
\end{align}
Evaluating \eqref{Lagrangian_new} at \eqref{inputSBP} and setting the resulting integral equal to zero, yields
\begin{align}
\int_{t_0}^{t_1}\int_{\mathbb{R}^{n}}\left(- \dfrac{1}{2}\vert\nabla \psi_t \vert^{2} + q(\bm{x}_t^{\bm{u}}) - \dfrac{\partial\psi_t}{\partial t} - \Delta\psi_t \right) \rho^{\rm{opt}}_t \:\differential\bm{x}_t^{\bm{u}}\:\differential t = 0.
\label{dynammic_programming}
\end{align}
For \eqref{dynammic_programming} to hold for an arbitrary $\rho_0$, and thus for arbitrary $\rho^{\rm{opt}}_t$, we must have
\begin{align*}
    - \dfrac{1}{2}\vert\nabla \psi_t \vert^{2} + q(\bm{x}_t^{\bm{u}}) - \dfrac{\partial\psi_t}{\partial t} -  \Delta\psi_t=0, 
\end{align*}
and so we arrive at a Hamilton-Jacobi-Bellman (HJB)-like PDE
\begin{align*}
    \dfrac{\partial\psi_t}{\partial t} +\dfrac{1}{2}\vert \nabla \psi_t \vert^{2} + \Delta\psi_t = q(\bm{x}_t^{\bm{u}}).
\end{align*}
Combining \eqref{LambertSBPdynconstr} and \eqref{inputSBP} yields \begin{align*}
    \dfrac{\partial\rho^{\rm{opt}}_t}{\partial t} + \nabla\cdot \left(\rho_t^{\rm{opt}}\nabla \psi_t\right)= \Delta\rho_t^{\rm{opt}}.
\end{align*}

We have therefore shown that the tuple $\left(\rho^{\rm{opt}}_t,\bm{u}^{\rm{opt}}\right)$ solving problem \eqref{LambertSBP} satisfies the system of coupled PDEs:
\begin{subequations}
    \begin{align}
        &\dfrac{\partial\psi_t}{\partial t} +\dfrac{1}{2}\vert \nabla \psi_t \vert^{2} + \Delta\psi_t = q(\bm{x}_t^{\bm{u}}),\label{HJBSBP}\\
         &\dfrac{\partial\rho^{\rm{opt}}_t}{\partial t} + \nabla\cdot \left(\rho_t^{\rm{opt}}\nabla \psi_t\right)=\Delta\rho_t^{\rm{opt}}, \label{FPKSBP}
    \end{align}
    \label{FirstOrderConditions}
\end{subequations}
with boundary conditions
\begin{align}
    \rho^{\rm{opt}}_t(t=t_0,\cdot) = \rho_0(\cdot), \quad \rho_t^{\rm{opt}}(t=t_1,\cdot) = \rho_1(\cdot).
    \label{BoundaryConditionsForProp1}
\end{align}

To convert the coupled nonlinear PDEs \eqref{FirstOrderConditions} with decoupled boundary conditions \eqref{BoundaryConditionsForProp1}, to a system of decoupled linear reaction-diffusion PDEs with coupled boundary conditions, we consider the Hopf-Cole transform \cite{hopf1950partial,cole1951quasi} given by 
\begin{subequations}
\begin{align}
\varphi &= \exp \left( \psi_t \right), \label{phi_epsilon}\\
\widehat{\varphi} &= \rho^{\rm{opt}}_t \exp \left( - \psi_t \right).\label{phi_hat_epsilon}
\end{align} 
\label{defphiphihat}
\end{subequations}
Combining \eqref{phi_epsilon} with \eqref{phi_hat_epsilon} yields $\rho^{\rm{opt}}_t = \varphi \widehat{\varphi}$, which recovers \eqref{OptimallyControlledJointPDF}. Substituting $\psi_t = \log{\varphi}$ into \eqref{inputSBP} yields \eqref{OptimalVelocity}.

Substituting $\rho^{\rm{opt}}_t = \varphi \widehat{\varphi}$ into the boundary conditions \eqref{BoundaryConditionsForProp1}, yields new boundary conditions 
\begin{align*}
    \widehat{\varphi}(t=t_0,\cdot)\varphi(t=t_0,\cdot) = \rho_0, \quad \widehat{\varphi}(t=t_1,\cdot)\varphi(t=t_1,\cdot) = \rho_1,
\end{align*}
thus recovering \eqref{factorBC}. 

Additionally, substituting $\psi_t = \log{\varphi}$ into \eqref{HJBSBP} yields
\begin{align*}
    &\dfrac{\partial}{\partial t} \left( \log{\varphi} \right)+\dfrac{1}{2}\vert \nabla \log{\varphi} \vert^{2} + \Delta\log{\varphi} = q(\bm{x}_t^{\bm{u}}),
\end{align*}
which then simplifies to \eqref{FactorPDEBackward}.

Substituting $\rho^{\rm{opt}}_t = \varphi \widehat{\varphi}$ and $\psi_t = \log{\varphi}$ into \eqref{FPKSBP} yields $\dfrac{\partial}{\partial t} \left( \varphi \widehat{\varphi} \right)+ \nabla\cdot \left( \left( \varphi \widehat{\varphi} \right) \nabla\log{\varphi}\right)=\Delta\left( \varphi \widehat{\varphi} \right)$, which expands to
\begin{align*}
    \varphi \dfrac{\partial \widehat{\varphi}}{\partial t} + \widehat{\varphi}\dfrac{\partial \varphi}{\partial t} + \langle \nabla \widehat{\varphi}, \nabla \varphi \rangle + \widehat{\varphi} \Delta \varphi = \varphi \Delta \widehat{\varphi} + 2\langle \nabla \varphi, \nabla \widehat{\varphi} \rangle + \widehat{\varphi} \Delta \varphi. 
\end{align*}
Finally, substituting \eqref{FactorPDEBackward} into the above yields \eqref{FactorPDEForward}.\qedsymbol

\subsection{Proof for Lemma \ref{Lemma:SolToSpatialODE}} \label{proof_of_SolToSpatialODE}
Let $Y(y) = \exp{\left( -\frac{y^2 \sqrt{d}}{2} \right)} f(y)$ be the solution of the parametric ODE \eqref{SpatialODE} for some unknown $f(y)$. 
  Substituting this expression for $Y(y)$ into \eqref{SpatialODE} yields:
\begin{align*}
    &-\sqrt{d} \exp{\left( -\frac{y^2 \sqrt{d}}{2} \right)} f + d y^2 \exp{\left( -\frac{y^2 \sqrt{d}}{2} \right)} f - y\sqrt{d} \exp{\left( -\frac{y^2 \sqrt{d}}{2} \right)} \frac{\differential f}{\differential y} \\ -&y\sqrt{d} \exp{\left( -\frac{y^2 \sqrt{d}}{2} \right)}\frac{\differential f}{\differential y}  + \exp{\left( -\frac{y^2 \sqrt{d}}{2} \right)} \frac{\differential ^2 f}{\differential y^2} - (d y^2 + c)\exp{\left( -\frac{y^2 \sqrt{d}}{2} \right)} f = 0.
\end{align*}
Dividing the above by $\exp{\left( -\frac{y^2 \sqrt{d}}{2} \right)}$ and simplifying, we obtain
\begin{align*}
      \frac{\differential ^2 f}{\differential y^2} - 2 y\sqrt{d} \frac{\differential f}{\differential y} - \left(c + \sqrt{d}\right) f = 0. 
\end{align*}
Introducing a change of variable $y\mapsto \xi := d^{1/4}y$ and letting $F(\xi):=f(d^{-1/4}\xi)$, we rewrite the above ODE as
\begin{align*}  \sqrt{d}\frac{\differential ^2 F}{\differential \xi^2} - 2 d^{-1/4} \xi d^{1/4} \sqrt{d} \frac{\differential F}{\differential \xi} - \left(c + \sqrt{d}\right) F = 0.
\end{align*}
Dividing through by $\sqrt{d}$ gives
\begin{align}
      \frac{\differential ^2 F}{\differential \xi^2} - 2\xi \frac{\differential F}{\differential \xi} - \left(\frac{c}{\sqrt{d}} + 1\right) F = 0,
      \label{hermite_ode}
\end{align}
which is the Hermite's ODE \cite[p. 328-329]{courant2008methods}. 

Assuming $f$, and thus $F$, must be polynomially bounded, the solutions to \eqref{hermite_ode} is $F(\xi)=H_{n}(\xi)$, the Hermite polynomials of order 
$n =  - \frac{c}{2\sqrt{d}} - \frac{1}{2} \in \mathbb{N}_{0}$. Therefore, the solutions to \eqref{SpatialODE} can be written as $Y(y) = a \exp{\left(- \frac{y^2 \sqrt{d}}{2}\right)} H_{n}\left(  d^{1/4} y \right)$
for some constant $a \in \mathbb{R}$ and $n = - \frac{c}{2\sqrt{d}} - \frac{1}{2}\in \mathbb{N}_{0}$.\qedsymbol

\subsection{Proof for Proposition \ref{ParticularSol}}\label{proof_of_ParticularSol}
Substituting $t=t_0$ in \eqref{general_solution} gives
\begin{align}
    \widehat{\eta}_0(\bm{y}) = \sum_{m_n = 0}^{\infty} \ldots \sum_{m_1 = 0}^{\infty} a_{\bm{m}} \prod_{i = 1}^{n} \exp{\left(- t_{0}\sqrt{d_i}\left(2 m_i + 1\right)- \frac{y_i^2 \sqrt{d_i}}{2} \right)} H_{m_i}\!\left( d_i^{1/4}y_i\right).
\label{eta0att0}    
\end{align}
For a fixed $j_{1}\in\mathbb{N}$, multiplying both sides of \eqref{eta0att0}  by $H_{j_1}\left(\!d_1^{1/4}y_1\right)\exp{\left(-\frac{y_1^2 \sqrt{d_1}}{2}\right)}$,
and integrating over $y_1$, we get
\begin{align*}
    &\int_{-\infty}^{\infty} H_{j_1}\left(d_1^{1/4} y_1\right)\exp{\left(- \frac{y_1^2 \sqrt{d_1}}{2} \right)} \widehat{\eta}_0(\bm{y})\differential y_1 \\ 
    &= \int_{-\infty}^{\infty} \sum_{m_n = 0}^{\infty} \ldots \sum_{m_1 = 0}^{\infty} a_{\bm{m}} \prod_{i = 2}^{n} \exp{\left(- t_{0}\sqrt{d_i}\left(2 m_i + 1\right)- \frac{y_i^2 \sqrt{d_i}}{2} \right)} \\
    & \hspace{20pt}H_{m_i}\left( d_i^{1/4}y_i \right) H_{m_1}\left(d_1^{1/4} y_1\right) H_{j_1}\left(d_1^{1/4} y_1\right)\exp{\left(- t_{0}\sqrt{d_1}\left(2 m_1 + 1\right)- y_1^2 \sqrt{d_1} \right)} \differential y_1.
\end{align*}
Using \eqref{OrthogonalityHermitePoly}, we simplify the above as follows:
\begin{align*}
    &\int_{-\infty}^{\infty} H_{j_1}(d_1^{1/4} y_1)\exp{\left(- \frac{y_1^2 \sqrt{d_1}}{2} \right)} \widehat{\eta}_0(\bm{y}) \differential y_1 \\ 
    &= \sum_{m_n = 0}^{\infty} \ldots \sum_{m_2 = 0}^{\infty} a_{(j_1, m_2, \ldots, m_n)} \prod_{i = 2}^{n} \exp{\left(-t_{0}\sqrt{d_{i}}\left(2m_{i}+1\right)- \frac{y_i^2 \sqrt{d_i}}{2} \right)} H_{m_i}\left( d_i^{1/4}y_i \right)  \\
    &\hspace{50pt}\int_{-\infty}^{\infty}  H_{j_1}(d_1^{1/4} y_1) H_{j_1}(d_1^{1/4} y_1)\exp{\left(-t_{0}\sqrt{d_{1}}\left(2j_{1}+1\right)- y_1^2 \sqrt{d_1} \right)} dy_1\\
    &= \sum_{m_n = 0}^{\infty} \ldots \sum_{m_2 = 0}^{\infty} a_{(j_1, m_2, \ldots, m_n)} \prod_{i = 2}^{n} \exp{\left(-t_{0}\sqrt{d_{i}}\left(2m_{i}+1\right)- \frac{y_i^2 \sqrt{d_i}}{2} \right)} H_{m_i}\left( d_i^{1/4}y_i \right)\\
&\hspace{130pt}d_1^{-1/4} \sqrt{\pi}\: 2^{j_1}\:\left(j_{1}!\right)\exp{\left(-t_{0}\sqrt{d_{1}}\left(2j_{1}+1\right)\right)}.
\end{align*}
Repeating this process $n-1$ more times, yields
\begin{align*}
    &\int_{-\infty}^{\infty} \ldots \int_{-\infty}^{\infty} H_{j_1}\left(d_1^{1/4} y_1\right)\exp{\left(- \frac{y_1^2 \sqrt{d_1}}{2} \right)} \ldots  \\ 
    & \hspace{130pt}H_{j_n}\left(d_n^{1/4} y_n\right)\exp{\left(- \frac{y_n^2 \sqrt{d_n}}{2} \right)}\: \widehat{\eta}_0(\bm{y}) \:\differential y_1 \ldots \differential y_n\\
    & = a_{(j_1, j_2, \ldots, j_n)} \prod_{i = 1}^{n} \left( d_i^{-1/4} \sqrt{\pi} 2^{j_i}\left(j_{i}!\right) \exp{\left(-t_{0}\sqrt{d_{i}}\left(2j_{i}+1\right)\right)}\right).
\end{align*}
Therefore, for any index vector $\bm{j}=(j_1,j_2,\ldots,j_n)\in\mathbb{N}_{0}^{n}$, we have
\begin{align}
    a_{\bm{j}} &= \int_{-\infty}^{\infty} \ldots \int_{-\infty}^{\infty} \prod_{i = 1}^{n} \left( \frac{d_i^{1/4}} {\sqrt{\pi} 2^{j_i}\left(j_{i}!\right)} H_{j_i}\left(d_i^{1/4} y_i\right)\exp{\left(t_{0}\sqrt{d_{i}}\left(2j_{i}+1\right)- \frac{y_i^2 \sqrt{d_i}}{2} \right)} \right) \nonumber\\
&\hspace{250pt}\widehat{\eta}_0\left(\bm{y}\right)\differential y_1 \ldots \differential y_n.
\label{GeneralCoeff}
\end{align}
Substituting \eqref{GeneralCoeff} for the coefficients in the general solution \eqref{general_solution}, and using $\bm{z}$ as the dummy integration variable, we arrive at \eqref{general_solution_with_coeff}-\eqref{integrand}.\qedsymbol

\subsection{Proof for Theorem \ref{Thm:Kernel}} \label{proof_of_Thm:Kernel}
We start by re-writing \eqref{general_solution_with_coeff}-\eqref{integrand} as
\begin{align}
    \widehat{\eta}(t, \bm{y})= &\int_{-\infty}^{\infty} \hdots\int_{-\infty}^{\infty} \prod_{i = 1}^{n} \bigg\{ \frac{d_i^{1/4}}{\sqrt{\pi}} \exp{\left( - \frac{(y_i^2 + z_i^2)\sqrt{d_i}}{2} - \left(t-t_{0}\right) \sqrt{d_i} \right)}(\heartsuit)\bigg\} \label{EtaHatSemifinal}\\
&\hspace*{210pt}\widehat{\eta}_0(\bm{z})\:\differential z_1 \ldots \differential z_n\nonumber, 
\end{align}
where
\begin{align}
    (\heartsuit) &:= \sum_{m_i = 0}^{\infty}\!\!\left( \frac{1}{2^{m_i}\left(m_i!\right)} \exp{(-2m_i\left(t-t_{0}\right)\sqrt{d_i})} H_{m_i}\left(d_i^{1/4}y_i\right)H_{m_i}\left(d_i^{1/4}z_i\right)\!\!\right).
\label{defHeart}    
\end{align}
For fixed $c>0$, we use the Hermite polynomial identity:
\begin{align*}
    H_{r}(cx) = \frac{1}{c^r} e^{(cx)^2} \left( - \frac{\differential}{\differential x} \right)^r \left( e^{-(cx)^2}\right) = (-2\complexi)^r \frac{1}{\sqrt{\pi}} \int_{0}^{\infty} e^{-(\theta-cx\complexi)^2} \theta^r \differential \theta, \quad r\in\mathbb{N}_{0},
\end{align*}
where the first equality is due to definition \eqref{defPhysicistHermitPoly}, and the second equality is due to the Fourier representation 
\begin{align}
e^{-x^2} = \int e^{-\theta^2 + 2\complexi x \theta}\differential \theta/\sqrt{\pi}.
\label{FourierRepresentation}    
\end{align}
Therefore,
\begin{align}
&(\heartsuit) 
    = \sum_{m_i = 0}^{\infty} \frac{1}{2^{m_i}\left(m_i!\right)} \exp{\left(-2m_{i}\left(t-t_{0}\right)\sqrt{d_i}\right)} \frac{1}{d_i^{m_i/4}} e^{\left(d_i^{1/4}y_i\right)^2} \left( - \!\!\frac{\differential}{\differential y_i} \right)^{m_{i}} \!\!e^{-\left(d_i^{1/4}y_i\right)^2} \nonumber\\ & \qquad\qquad\qquad\qquad\times (-2\complexi)^{m_i} \frac{1}{\sqrt{\pi}} \int_{t_{0}}^{\infty} e^{-\left(\theta-t_{0}-d_i^{1/4}z_i\complexi\right)^2} \left(\theta-t_{0}\right)^{m_{i}} \differential \theta  \nonumber\\
    &= \hspace{-2pt}\frac{e^{\left(d_i^{1/4}y_i\right)^2} }{\sqrt{\pi}} \int_{t_{0}}^{\infty} \sum_{m_i = 0}^{\infty} \left(\frac{(\varrho_{i}  \complexi \left(\theta-t_{0}\right) )^{m_i}}{\left(m_i!\right)\left(d_{i}^{1/4}\right)^{m_i}}\left( \frac{\differential}{\differential y_i} \right)^{m_{i}} \!\!e^{-\left(d_i^{1/4}y_i\right)^2} \!\right) e^{-\left(\theta-t_{0}-d_i^{1/4}z_i\complexi\right)^2} \differential \theta \nonumber
\end{align}
where $\varrho_i := \exp{(-2\left(t-t_{0}\right)\sqrt{d_i})}$. Invoking Lemma \ref{LemmaExponentialOfQuadAsSeries}, we note that
\begin{align*}
    \sum_{m_i = 0}^{\infty} \left(\frac{(\varrho_{i}  \complexi \left(\theta-t_{0}\right) )^{m_i}}{\left(m_i!\right)\left(d_{i}^{1/4}\right)^{m_i}} \left( \frac{\differential}{\differential y_i} \right)^{m_{i}} \hspace{-5pt}\left( e^{-(d_i^{1/4}y_i)^2} \right) \hspace{-4pt}\right) = e^{-\left(d_i^{1/4}y_i + \complexi\varrho_i\left(\theta-t_{0}\right)\right)^{2}},
\end{align*}
and so, $(\heartsuit)$ simplifies to
\begin{align}
\dfrac{e^{d_{i}^{1/2}z_{i}^{2}}}{\sqrt{\pi}}\int_{t_0}^{\infty}\exp\left(-(1-\varrho_i^2)(\theta-t_0)^2 + 2d_i^{1/4}(\theta-t_0)\complexi\left(z_{i} - y_i\varrho_i\right)\right)\differential\theta
\label{SimplifiedIntegral}.
\end{align}
Noting that $0<\varrho_i<1$, we evaluate \eqref{SimplifiedIntegral} by substituting $\sigma := \sqrt{1 - \varrho_i^2}\left(\theta-t_{0}\right)$ as
\begin{align}
(\heartsuit)=&\dfrac{e^{d_{i}^{1/2}z_{i}^{2}}}{\sqrt{1-\varrho_i^2}\sqrt{\pi}}\int_{0}^{\infty}\exp\left(-\sigma^2 + 2\complexi\sigma\dfrac{d_i^{1/4}\left(z_i - y_i\varrho_i\right)}{\sqrt{1-\varrho_i^2}}\right)\differential \sigma\nonumber\\
=& \dfrac{1}{\sqrt{1-\varrho_i^2}}\exp\left(\dfrac{2\varrho_i d_{i}^{1/2} y_i z_i  - \varrho_i^2 d_i^{1/2}\left(y_i^2 + z_i^2\right)}{1-\varrho_i^2}\right),
\label{ReallySimplifiedIntegral}
\end{align}
where the last equality follows from \eqref{FourierRepresentation}. Combining \eqref{ReallySimplifiedIntegral} with \eqref{EtaHatSemifinal}, we obtain $
    \widehat{\eta}(t, \bm{y} ) = \int_{-\infty}^{\infty} \ldots \int_{-\infty}^{\infty} \kappa_{++}(t_0, \bm{y}, t, \bm{z}) \widehat{\eta}_0(\bm{z}) \differential z_1 \ldots \differential z_n$ where the kernel
\begin{multline*}
    \kappa_{++}(t_0, \bm{y}, t, \bm{z}) := \prod_{i = 1}^{n} \bigg\{ \frac{d_i^{1/4}}{\sqrt{\pi}} \exp{\left( - \frac{(y_i^2 + z_i^2)\sqrt{d_i}}{2} - \left(t-t_{0}\right) \sqrt{d_i} \right)} \\
    \frac{1}{\sqrt{1 - \varrho_i^2}} \exp\left(\dfrac{2\varrho_i d_{i}^{1/2} y_i z_i  - \varrho_i^2 d_i^{1/2}\left(y_i^2 + z_i^2\right)}{1-\varrho_i^2}\right) \bigg\}.
\end{multline*}
Next, we rearrange the above expression for $\kappa_{++}$ as
\begin{align}
    \kappa_{++}(t_{0}, \bm{y}, t, \bm{z}) = \frac{1}{\pi^{n/2}} \left(\prod_{i = 1}^{n}  d_i^{1/4} \right) \left(\prod_{i = 1}^{n}  \frac{1}{\sqrt{1 - \varrho_i^2}} \right) 
    \left(\prod_{i = 1}^{n} \exp{\left( - \left(t-t_{0}\right) \sqrt{d_i} \right)} \right) \times \label{RearrangedKernel}\\ \left(\prod_{i = 1}^{n}  \exp{\left( \left( - \frac{1}{2} - \frac{\varrho_i^2}{1 - \varrho_i^2} \right)(y_i^2 + z_i^2)\sqrt{d_i} \right)} \right) \left(\prod_{i = 1}^{n}  \exp{\left( \frac{2 \varrho_i}{1-\varrho_i^2} y_i z_i \sqrt{d_i} \right)} \right).\nonumber
\end{align}
Recalling the definitions of hyperbolic functions and that $\varrho_i := \exp{(-2\left(t-t_{0}\right)\sqrt{d_i})}$, we find: $1/\sqrt{1 - \varrho_i^2} = e^{\left(t-t_{0}\right)\sqrt{d_i}}/\sqrt{2\sinh(2\left(t-t_{0}\right)\sqrt{d_i})}$, $2 \varrho_i/(1 - \varrho_i^2) = 1/\sinh(2\left(t-t_{0}\right)\allowbreak\sqrt{d_i})$, and $-1/2 - \varrho_i^2/(1 - \varrho_i^2) = -\coth(2\left(t-t_{0}\right)\sqrt{d_i})/2$. Hence, \eqref{RearrangedKernel} can be expressed as
\begin{align}
    \kappa_{++}(t_0, \bm{y}, t, \bm{z}) = \frac{\left(\prod_{i = 1}^{n}  d_i^{1/4} \right)}{(2\pi)^{n/2}}  \left(\prod_{i = 1}^{n}  \frac{1}{\sqrt{\sinh(2\left(t-t_{0}\right)\sqrt{d_i})}} \right) \times \label{KernelFinal}\\
    \left(\prod_{i = 1}^{n}  \exp{\left( \frac{\sqrt{d_i}}{\sinh(2\left(t-t_{0}\right)\sqrt{d_i})} \left( y_i z_i + \left(- \frac{1}{2} \cosh(2\left(t-t_{0}\right)\sqrt{d_i}) \right)(y_i^2 + z_i^2) \right) \right) } \right).\nonumber
\end{align}
Letting ${\black{\bm{M}_{tt_{0}}}}$ as in \eqref{Matrix_M_Def}
and noting that $\det{\left({\black{\bm{M}_{tt_{0}}}} \right)} = \prod_{i=1}^n d_i>0$, \eqref{KernelFinal} simplifies to \eqref{kernel_function}.\qedsymbol

\subsection{Proof for Proposition \ref{Proposition:MatrixM}}\label{ProofMatrixM}
(i) Follows from direct computation.\\
\noindent(ii) Thanks to the hyperbolic function identity $\coth^{2}(\cdot)-{\rm{csch}}^{2}(\cdot)=1$, definition \eqref{defSymplecticGroup} is satisfied, i.e.,  $\left({\black{\bm{M}_{tt_{0}}^{(1)}\bm{M}_{tt_{0}}^{(2)}}}\right)^{\top}\bm{J}\allowbreak\left({\black{\bm{M}_{tt_{0}}^{(1)}\bm{M}_{tt_{0}}^{(2)}}}\right)=\bm{J}$. Thus ${\black{\bm{M}_{tt_{0}}^{(1)}\bm{M}_{tt_{0}}^{(2)}}}\in{\rm{Sp}}\left(2n,\mathbb{R}\right)$.

To see positive definiteness, first note that ${\black{\bm{M}_{tt_{0}}^{(1)}}}\succ 0$ for $\bm{D}\succ 0$ since
\begin{align*}
\begin{pmatrix} 
\bm{y} & \bm{z}
\end{pmatrix}{\black{\bm{M}_{tt_{0}}^{(1)}}}\begin{pmatrix}
\bm{y}\\ 
\bm{z}
\end{pmatrix} &= \begin{pmatrix} 
\bm{y} & \bm{z}
\end{pmatrix}\begin{bmatrix}
\cosh\left(2{\black{\left(t-t_{0}\right)}}\sqrt{\bm{D}}\right) & -\bm{I}\\
-\bm{I} & \cosh\left(2{\black{\left(t-t_{0}\right)}}\sqrt{\bm{D}}\right)
\end{bmatrix}\begin{pmatrix} 
\bm{y}\\ 
\bm{z}
\end{pmatrix}\\
&= \bm{y}^{\top}\underbrace{\cosh\left(2{\black{\left(t-t_{0}\right)}}\sqrt{\bm{D}}\right)}_{\succ \bm{I}}\bm{y} -2\langle\bm{y},\bm{z}\rangle + \bm{z}^{\top}\underbrace{\cosh\left(2{\black{\left(t-t_{0}\right)}}\sqrt{\bm{D}}\right)}_{\succ \bm{I}}\bm{z}\\
&>\|\bm{y}-\bm{z}\|_2^2 \geq 0,
\end{align*}
where the second equality follows because $\cosh(\cdot)>1$ for nonzero argument.

Also, $\bm{D}\succ \bm{0}$ implies the positive diagonal matrix ${\black{\bm{M}_{tt_{0}}^{(2)}}}\succ \bm{0}$, and therefore, 
\begin{align*}
{\rm{eig}}\left({\black{\bm{M}_{tt_{0}}^{(1)}\bm{M}_{tt_{0}}^{(2)}}}\right) &= {\rm{eig}}\left({\black{\bm{M}_{tt_{0}}^{(1)}\left(\bm{M}_{tt_{0}}^{(2)}\right)^{1/2}\left(\bm{M}_{tt_{0}}^{(2)}\right)^{1/2}}}\right)\\&= {\rm{eig}}\left(\underbrace{{\black{\left(\bm{M}_{tt_{0}}^{(2)}\right)^{1/2}\bm{M}_{tt_{0}}^{(1)}\left(\bm{M}_{tt_{0}}^{(2)}\right)^{1/2}}}}_{\text{congruence transform of ${\black{\bm{M}_{tt_{0}}^{(1)}}}$}}\right)>0.   
\end{align*}
Additionally, ${\black{\bm{M}_{tt_{0}}^{(1)}\bm{M}_{tt_{0}}^{(2)}}}$ being symmetric, we have that ${\black{\bm{M}_{tt_{0}}^{(1)}\bm{M}_{tt_{0}}^{(2)}}}\succ \bm{0}$.\\
\noindent(iii) That
$${\rm{eig}}\left({\black{\bm{M}_{tt_{0}}}}\right) = {\rm{eig}}\left(\underbrace{\begin{bmatrix}
\bm{D}^{1/4} & \bm{0}\\
\bm{0} & \bm{D}^{1/4}
\end{bmatrix}{\black{\bm{M}_{tt_{0}}^{(1)}\bm{M}_{tt_{0}}^{(2)}}}\begin{bmatrix}
\bm{D}^{1/4} & \bm{0}\\
\bm{0} & \bm{D}^{1/4}
\end{bmatrix}}_{\text{congruence transform of ${\black{\bm{M}_{tt_{0}}^{(1)}\bm{M}_{tt_{0}}^{(2)}}}$}}\right) > 0,$$
and ${\black{\bm{M}_{tt_{0}}}}$ is symmetric, together implies ${\black{\bm{M}_{tt_{0}}}}\succ\bm{0}$.\qedsymbol

{\black{\subsection{Proof for Proposition \ref{prop:Geodesic}}\label{proof_of_prop_geodesic}
That the minimum in \eqref{ActionIntegral} is attained for the $L$ in \eqref{defLagrangianGeodesic}, follows from the strict convexity and coercivity of $\bm{v}\mapsto L\left(\cdot,\bm{v}\right)$.

For notational convenience, let $\omega_{i}:=2\sqrt{d_{i}}$ $\forall i=1,\hdots,n$. The optimal solution $\bm{q}_{\rm{opt}}(\tau)$, $\tau\in[t_0,t]$, solves the Euler-Lagrange equation $\frac{\differential^2}{\differential\tau^2}{\bm{q}}^{\rm{opt}} = 4\bm{Dq}^{\rm{opt}}$, or in component form: $\frac{\differential^2}{\differential\tau^2}{q_i}^{\rm{opt}} = \omega_{i}^{2}q_{i}^{\rm{opt}}$ subject to the endpoint constraints $q_{i}^{\rm{opt}}(t_0)=y_{i}$, $q_{i}^{\rm{opt}}(t)=z_{i}$. The general solution is $q_{i}^{\rm{opt}}(\tau) = a_{i}e^{\omega_{i}\tau} + b_{i}e^{-\omega_{i}\tau}$, and the endpoint constraints determine the constants $\left(a_i,b_i\right)$ as  
\begin{align}
\left(a_i,b_i\right)=\left(-y_{i}e^{-\omega_{i}t} + z_{i}e^{-\omega_{i}t_{0}},\:y_{i}e^{\omega_{i}t} - z_{i}e^{\omega_{i}t_{0}}\right)/2\sinh\left(\omega_{i}\left(t-t_{0}\right)\right).
\label{Constantsaibi}
\end{align}
Then, \eqref{ActionIntegral} gives
\begin{align}
\dist_{tt_{0}}^2(\bm{y},\bm{z}) &= \int_{t_{0}}^{t} \sum_{i=1}^{n}\left(\dfrac{1}{2}\left(\dfrac{\differential}{\differential\tau}q_{i}^{\rm{opt}}(\tau)\right)^2 + \dfrac{1}{2}\omega_{i}^{2}\left(q_{i}^{\rm{opt}}(\tau)\right)^{2}\right)\differential\tau \nonumber\\
&= \displaystyle\sum_{i=1}^{n}\omega_{i}^{2}\int_{t_{0}}^{t} \left(a_{i}^{2}e^{2\omega_{i}\tau} + b_{i}^{2}e^{-2\omega_{i}\tau}\right)\differential\tau\nonumber\\
&=\sum_{i=1}^{n}\dfrac{\omega_{i}}{2}\big\{a_{i}^{2}\left(e^{2\omega_{i}t}-e^{2\omega_{i}t_{0}}\right) - b_{i}^{2}\left(e^{-2\omega_{i}t}-e^{-2\omega_{i}t_{0}}\right)\big\}\nonumber\\
& = \sum_{i=1}^{n}\dfrac{\omega_{i}}{2}\dfrac{\left(y_{i}^{2}+z_{i}^{2}\right)\cosh\left(\omega_{i}\left(t-t_{0}\right)\right) - 2y_{i}z_{i}}{\sinh\left(\omega_{i}\left(t-t_{0}\right)\right)},\label{distsquaredFinal}
\end{align}
where in the last line, we substituted for $(a_i,b_i)$ from \eqref{Constantsaibi}, and simplified the resulting expression using the identity $\sinh\left(2\left(\cdot\right)\right) = 2\sinh\left(\cdot\right)\cosh\left(\cdot\right)$.

Since $\omega_{i}=2\sqrt{d_{i}}$ $\forall i=1,\hdots,n$, we see that \eqref{distsquaredFinal} is indeed the desired expression, c.f. the expression inside the exponential in \eqref{KernelFinal}. This completes the proof.
\qedsymbol
}}

\subsection{Proof for Theorem \ref{ThmHeatKernelSplCase}}\label{proof_of_heat_kernel_spl_case}
Notice that the kernel \eqref{kernel_function} is a separable product in tuple $(t-t_0, d_i, y_i, z_i)$, cf. \eqref{KernelFinal}. Letting $\delta_i := \sqrt{d_i}$ $\forall i=1,\hdots, n$, we then have 
\begin{align}
&\displaystyle\lim_{(d_1,\hdots,d_n)\downarrow\bm{0}}\kappa_{++}(t_0, \bm{y}, t, \bm{z}) = \dfrac{1}{(2\pi)^{n/2}}\prod_{i=1}^{n}\bigg\{\displaystyle\lim_{\delta_{i}\downarrow 0}\sqrt{\dfrac{\delta_i}{\sinh(2\left(t-t_{0}\right)\delta_i)}}\nonumber\\
&\exp\left(-\frac{1}{2}\delta_i\left(\coth(2(t-t_0)\delta_i)(y_i^2 + z_i^2) - 2{\rm{csch}}(2(t-t_0)\delta_i)y_i z_i\right)\right)\bigg\}\nonumber\\
&=\dfrac{1}{(2\pi)^{n/2}}\prod_{i=1}^{n}\sqrt{\ell_{1}}\exp\left(-\frac{1}{2}\begin{pmatrix} 
y_i & z_i
\end{pmatrix} \begin{bmatrix}
\ell_{2} & -\ell_{3}\\
-\ell_{3} & \ell_{2} 
\end{bmatrix}\begin{pmatrix} 
y_i\\ 
z_i
\end{pmatrix}
\right),
\label{LimitForm}
\end{align}
where $\ell_{1}:=\displaystyle\lim_{\delta_{i}\downarrow 0}\dfrac{\delta_i}{\sinh(2\left(t-t_{0}\right)\delta_i)}$, $\ell_{2}:= \displaystyle\lim_{\delta_i\downarrow 0}\delta_i\coth(2(t-t_0)\delta_i)$, and $\ell_{3}:= \displaystyle\lim_{\delta_i\downarrow 0}\delta_i{\rm{csch}}\allowbreak(2(t-t_0)\delta_i)$. 

We evaluate the limits $\ell_1,\ell_2,\ell_3$ by applying L'H\^{o}pital's rule: $\ell_{1}=\ell_{2}=\ell_{3}=1/(2(t-t_0))$. Substituting these back in \eqref{LimitForm} yields $\displaystyle\lim_{(d_1,\hdots,d_n)\downarrow\bm{0}}\kappa_{++}(t_0, \bm{y}, t, \bm{z}) =\left(4\pi(t-t_{0})\right)^{-n/2}\exp\left(-\frac{\vert\bm{y}-\bm{z}\vert^2}{4(t-t_{0})}\right)=\kappa_{0}(t_0, \bm{y}, t, \boldsymbol{z})$,
as claimed.\qedsymbol

\subsection{Proof for Theorem \ref{SemiPosDefKernel}} \label{proof_of_SemiPosDefKernel}
We write 
\begin{align*}
&\kappa_{+}\left(t_0,\bm{y},t,\bm{z}\right) = \displaystyle\lim_{\left(d_{i_{n-p+1}},\hdots,d_{i_{n}}\right)\downarrow\bm{0}}\kappa_{++}\left(t_0,\bm{y},t,\bm{z}\right)\\
&= \kappa_{++}\left(t_0,\bm{y}_{\left[i_{1}:i_{n-p}\right]},t,\bm{z}_{\left[i_{1}:i_{n-p}\right]}\right)\displaystyle\lim_{\left(d_{i_{n-p+1}},\hdots,d_{i_{n}}\right)\downarrow\bm{0}}\kappa_{++}\left(t_0,\bm{y}_{\left[i_{n-p+1}:i_{n}\right]},t,\bm{z}_{\left[i_{n-p+1}:i_{n}\right]}\right),
\end{align*}
where the last equality is due to the separable (in spatial coordinates) product structure in $\kappa_{++}$. Invoking Theorem \ref{ThmHeatKernelSplCase} for the remaining limit, the result follows.\qedsymbol

\bibliographystyle{siamplain}
\bibliography{references}

\begin{thebibliography}{10}

\bibitem{AbraSteg72}
{\sc M.~Abramowitz and I.~A. Stegun}, eds., {\em Handbook of Mathematical Functions with Formulas, Graphs, and Mathematical Tables}, U.S. Government Printing Office, Washington, DC, USA, tenth printing~ed., 1972.

\bibitem{aebi1996schrodinger}
{\sc R.~Aebi}, {\em Schr{\"o}dinger diffusion processes}, Springer Science \& Business Media, 1996.

\bibitem{aebi1992large}
{\sc R.~Aebi and M.~Nagasawa}, {\em Large deviations and the propagation of chaos for {S}chr{\"o}dinger processes}, Probability Theory and Related Fields, 94 (1992), pp.~53--68.

\bibitem{anderson2007optimal}
{\sc B.~D. Anderson and J.~B. Moore}, {\em Optimal control: linear quadratic methods}, Courier Corporation, 2007.

\bibitem{benamou2000computational}
{\sc J.-D. Benamou and Y.~Brenier}, {\em A computational fluid mechanics solution to the {M}onge-{K}antorovich mass transfer problem}, Numerische Mathematik, 84 (2000), pp.~375--393, \url{https://doi.org/10.1007/s002119900117}.

\bibitem{beurling1960automorphism}
{\sc A.~Beurling}, {\em An automorphism of product measures}, Annals of Mathematics,  (1960), pp.~189--200.

\bibitem{born1926quantenmechanik}
{\sc M.~Born}, {\em Quantenmechanik der sto{\ss}vorg{\"a}nge}, Zeitschrift f{\"u}r physik, 38 (1926), pp.~803--827.

\bibitem{borwein2013closed}
{\sc J.~M. Borwein, R.~E. Crandall, et~al.}, {\em Closed forms: what they are and why we care}, Notices of the AMS, 60 (2013), pp.~50--65.

\bibitem{bunne2023schrodinger}
{\sc C.~Bunne, Y.-P. Hsieh, M.~Cuturi, and A.~Krause}, {\em The {S}chr{\"o}dinger bridge between {G}aussian measures has a closed form}, in International Conference on Artificial Intelligence and Statistics, PMLR, 2023, pp.~5802--5833.

\bibitem{bushell1973hilbert}
{\sc P.~J. Bushell}, {\em Hilbert's metric and positive contraction mappings in a {B}anach space}, Archive for Rational Mechanics and Analysis, 52 (1973), pp.~330--338.

\bibitem{caluya2021reflected}
{\sc K.~F. Caluya and A.~Halder}, {\em Reflected {S}chr{\"o}dinger bridge: Density control with path constraints}, in 2021 American Control Conference (ACC), IEEE, 2021, pp.~1137--1142.

\bibitem{caluya2021wasserstein}
{\sc K.~F. Caluya and A.~Halder}, {\em Wasserstein proximal algorithms for the {S}chr{\"o}dinger bridge problem: density control with nonlinear drift}, IEEE Transactions on Automatic Control, 67 (2021), pp.~1163--1178.

\bibitem{chen2021likelihood}
{\sc T.~Chen, G.-H. Liu, and E.~Theodorou}, {\em Likelihood training of {S}chr{\"o}dinger bridge using forward-backward {SDE}s theory}, in International Conference on Learning Representations, 2021.

\bibitem{chen2015optimalACC}
{\sc Y.~Chen, T.~Georgiou, and M.~Pavon}, {\em Optimal steering of inertial particles diffusing anisotropically with losses}, in 2015 American Control Conference (ACC), IEEE, 2015, pp.~1252--1257.

\bibitem{chen2016entropic}
{\sc Y.~Chen, T.~Georgiou, and M.~Pavon}, {\em Entropic and displacement interpolation: a computational approach using the {H}ilbert metric}, SIAM Journal on Applied Mathematics, 76 (2016), pp.~2375--2396, \url{https://doi.org/10.1137/16M1061382}.

\bibitem{chen2015optimal}
{\sc Y.~Chen, T.~T. Georgiou, and M.~Pavon}, {\em Optimal steering of a linear stochastic system to a final probability distribution, {Part I}}, IEEE Transactions on Automatic Control, 61 (2015), pp.~1158--1169.

\bibitem{chen2016relation}
{\sc Y.~Chen, T.~T. Georgiou, and M.~Pavon}, {\em On the relation between optimal transport and {S}chr{\"o}dinger bridges: A stochastic control viewpoint}, Journal of Optimization Theory and Applications, 169 (2016), pp.~671--691.

\bibitem{8249875}
{\sc Y.~Chen, T.~T. Georgiou, and M.~Pavon}, {\em Optimal steering of a linear stochastic system to a final probability distribution—part {III}}, IEEE Transactions on Automatic Control, 63 (2018), pp.~3112--3118, \url{https://doi.org/10.1109/TAC.2018.2791362}.

\bibitem{chen2021stochastic}
{\sc Y.~Chen, T.~T. Georgiou, and M.~Pavon}, {\em Stochastic control liaisons: {R}ichard {S}inkhorn meets {G}aspard {M}onge on a {S}chr{\"o}dinger bridge}, Siam Review, 63 (2021), pp.~249--313.

\bibitem{cole1951quasi}
{\sc J.~D. Cole}, {\em On a quasi-linear parabolic equation occurring in aerodynamics}, Quarterly of applied mathematics, 9 (1951), pp.~225--236.

\bibitem{courant2008methods}
{\sc R.~Courant and D.~Hilbert}, {\em Methods of Mathematical Physics, Volume 1}, vol.~1, John Wiley \& Sons, 2008.

\bibitem{csiszar1984sanov}
{\sc I.~Csisz{\'a}r}, {\em Sanov property, generalized {I}-projection and a conditional limit theorem}, The Annals of Probability,  (1984), pp.~768--793.

\bibitem{cuturi2013sinkhorn}
{\sc M.~Cuturi}, {\em Sinkhorn distances: Lightspeed computation of optimal transport}, Advances in neural information processing systems, 26 (2013).

\bibitem{dawson1990schrodinger}
{\sc D.~Dawson, L.~Gorostiza, and A.~Wakolbinger}, {\em Schr{\"o}dinger processes and large deviations}, Journal of mathematical physics, 31 (1990), pp.~2385--2388, \url{https://doi.org/10.1063/1.528840}.

\bibitem{de2021diffusion}
{\sc V.~De~Bortoli, J.~Thornton, J.~Heng, and A.~Doucet}, {\em Diffusion {S}chr{\"o}dinger bridge with applications to score-based generative modeling}, Advances in Neural Information Processing Systems, 34 (2021), pp.~17695--17709.

\bibitem{dembo2009large}
{\sc A.~Dembo and O.~Zeitouni}, {\em Large deviations techniques and applications}, vol.~38, Springer Science \& Business Media, 2009.

\bibitem{deng2024reflected}
{\sc W.~Deng, Y.~Chen, N.~T. Yang, H.~Du, Q.~Feng, and R.~T. Chen}, {\em Reflected {S}chr{\"o}dinger bridge for constrained generative modeling}, arXiv preprint arXiv:2401.03228,  (2024).

\bibitem{follmer1988random}
{\sc H.~F{\"o}llmer}, {\em Random fields and diffusion processes}, Lect. Notes Math, 1362 (1988), pp.~101--204.

\bibitem{fortet1940resolution}
{\sc R.~Fortet}, {\em R{\'e}solution d'un syst{\`e}me d'{\'e}quations de {M}. {S}chr{\"o}dinger}, Journal de Math{\'e}matiques Pures et Appliqu{\'e}es, 19 (1940), pp.~83--105.

\bibitem{gantmakher2000theory}
{\sc F.~R. Gantmakher}, {\em The theory of matrices, Vol. 1}, American Mathematical Soc., 2000.

\bibitem{gentil2017analogy}
{\sc I.~Gentil, C.~L{\'e}onard, and L.~Ripani}, {\em About the analogy between optimal transport and minimal entropy}, in Annales de la Facult{\'e} des sciences de Toulouse: Math{\'e}matiques, vol.~26, 2017, pp.~569--600.

\bibitem{gushchin2024entropic}
{\sc N.~Gushchin, A.~Kolesov, A.~Korotin, D.~P. Vetrov, and E.~Burnaev}, {\em Entropic neural optimal transport via diffusion processes}, Advances in Neural Information Processing Systems, 36 (2024).

\bibitem{gushchin2024building}
{\sc N.~Gushchin, A.~Kolesov, P.~Mokrov, P.~Karpikova, A.~Spiridonov, E.~Burnaev, and A.~Korotin}, {\em Building the bridge of {S}chr{\"o}dinger: A continuous entropic optimal transport benchmark}, Advances in Neural Information Processing Systems, 36 (2024).

\bibitem{halder2020hopfield}
{\sc A.~Halder, K.~F. Caluya, B.~Travacca, and S.~J. Moura}, {\em Hopfield neural network flow: A geometric viewpoint}, IEEE Transactions on Neural Networks and Learning Systems, 31 (2020), pp.~4869--4880.

\bibitem{higham2008functions}
{\sc N.~J. Higham}, {\em Functions of matrices: theory and computation}, SIAM, 2008.

\bibitem{himmelblau2018applied}
{\sc D.~M. Himmelblau et~al.}, {\em Applied nonlinear programming}, McGraw-Hill, 2018.

\bibitem{hopf1950partial}
{\sc E.~Hopf}, {\em The partial differential equation $u_t+uu_x=\mu_{xx}$}, Communications on Pure and Applied mathematics, 3 (1950), pp.~201--230.

\bibitem{hormander1995symplectic}
{\sc L.~H{\"o}rmander}, {\em Symplectic classification of quadratic forms, and general {M}ehler formulas}, Mathematische Zeitschrift, 219 (1995), pp.~413--449.

\bibitem{jamison1974reciprocal}
{\sc B.~Jamison}, {\em Reciprocal processes}, Zeitschrift f{\"u}r Wahrscheinlichkeitstheorie und Verwandte Gebiete, 30 (1974), pp.~65--86.

\bibitem{karatzas2012brownian}
{\sc I.~Karatzas and S.~Shreve}, {\em Brownian motion and stochastic calculus}, vol.~113, Springer Science \& Business Media, 2012.

\bibitem{kibble1945extension}
{\sc W.~Kibble}, {\em An extension of a theorem of {M}ehler's on {H}ermite polynomials}, in Mathematical Proceedings of the Cambridge Philosophical Society, vol.~41, Cambridge University Press, 1945, pp.~12--15.

\bibitem{lee2024disco}
{\sc D.~Lee, D.~Lee, D.~Bang, and S.~Kim}, {\em Disco: Diffusion {S}chr{\"o}dinger bridge for molecular conformer optimization}, in Proceedings of the AAAI Conference on Artificial Intelligence, vol.~38, 2024, pp.~13365--13373.

\bibitem{leonard2011stochastic}
{\sc C.~L{\'e}onard}, {\em Stochastic derivatives and generalized h-transforms of {M}arkov processes}, arXiv preprint arXiv:1102.3172,  (2011).

\bibitem{leonard2012schrodinger}
{\sc C.~L{\'e}onard}, {\em From the {S}chr{\"o}dinger problem to the {M}onge--{K}antorovich problem}, Journal of Functional Analysis, 262 (2012), pp.~1879--1920.

\bibitem{liu2022deep}
{\sc G.-H. Liu, T.~Chen, O.~So, and E.~Theodorou}, {\em Deep generalized {S}chr{\"o}dinger bridge}, Advances in Neural Information Processing Systems, 35 (2022), pp.~9374--9388.

\bibitem{liu2023i2sb}
{\sc G.-H. Liu, A.~Vahdat, D.-A. Huang, E.~A. Theodorou, W.~Nie, and A.~Anandkumar}, {\em I\textsuperscript{2}{SB}: {I}mage-to-{I}mage {S}chr{\"o}dinger {B}ridge}, in Proceedings of the 40th International Conference on Machine Learning, 2023, pp.~22042--22062.

\bibitem{louck1981extension}
{\sc J.~Louck}, {\em Extension of the {Kibble-Slepian} formula for {H}ermite polynomials using boson operator methods}, Advances in Applied Mathematics, 2 (1981), pp.~239--249.

\bibitem{mehler1866ueber}
{\sc F.~G. Mehler}, {\em Ueber die entwicklung einer function von beliebig vielen variablen nach laplaceschen functionen h{\"o}herer ordnung.},  (1866).

\bibitem{nagasawa2012schrodinger}
{\sc M.~Nagasawa}, {\em Schr{\"o}dinger equations and diffusion theory}, vol.~86, Birkh{\"a}user, 2012.

\bibitem{nodozi2022schrodinger}
{\sc I.~Nodozi and A.~Halder}, {\em Schr{\"o}dinger meets {K}uramoto via {Feynman-Kac}: Minimum effort distribution steering for noisy nonuniform {K}uramoto oscillators}, in 2022 IEEE 61st Conference on Decision and Control (CDC), IEEE, 2022, pp.~2953--2960.

\bibitem{nodozi2023physics}
{\sc I.~Nodozi, J.~O’Leary, A.~Mesbah, and A.~Halder}, {\em A physics-informed deep learning approach for minimum effort stochastic control of colloidal self-assembly}, in 2023 American Control Conference (ACC), IEEE, 2023, pp.~609--615.

\bibitem{nodozi2023neural}
{\sc I.~Nodozi, C.~Yan, M.~Khare, A.~Halder, and A.~Mesbah}, {\em Neural {S}chr{\"o}dinger bridge with {S}inkhorn losses: Application to data-driven minimum effort control of colloidal self-assembly}, IEEE Transactions on Control Systems Technology, 32 (2024), pp.~960--973.

\bibitem{oksendal2013stochastic}
{\sc B.~Oksendal}, {\em Stochastic differential equations: an introduction with applications}, Springer Science \& Business Media, 2013.

\bibitem{peluchetti2023diffusion}
{\sc S.~Peluchetti}, {\em Diffusion bridge mixture transports, {S}chr{\"o}dinger bridge problems and generative modeling}, Journal of Machine Learning Research, 24 (2023), pp.~1--51.

\bibitem{pravda2018generalized}
{\sc K.~Pravda-Starov}, {\em Generalized {M}ehler formula for time-dependent non-selfadjoint quadratic operators and propagation of singularities}, Mathematische Annalen, 372 (2018), pp.~1335--1382.

\bibitem{robert2021coherent}
{\sc D.~Robert and M.~Combescure}, {\em Coherent states and applications in mathematical physics}, Springer, 2021.

\bibitem{sakurai2020modern}
{\sc J.~J. Sakurai and J.~Napolitano}, {\em Modern quantum mechanics}, Cambridge University Press, 2020.

\bibitem{sanov1957probability}
{\sc I.~N. Sanov}, {\em On the probability of large deviations of random magnitudes}, Matematicheskii Sbornik, 84 (1957), pp.~11--44.

\bibitem{schrodinger1931umkehrung}
{\sc E.~Schr{\"o}dinger}, {\em {\"U}ber die umkehrung der naturgesetze}, Sitzungsberichte der Preuss. Phys. Math. Klasse, 10 (1931), pp.~144--153.

\bibitem{schrodinger1932theorie}
{\sc E.~Schr{\"o}dinger}, {\em Sur la th{\'e}orie relativiste de l'{\'e}lectron et l'interpr{\'e}tation de la m{\'e}canique quantique}, in Annales de l'institut Henri Poincar{\'e}, vol.~2, 1932, pp.~269--310.

\bibitem{shi2024diffusion}
{\sc Y.~Shi, V.~De~Bortoli, A.~Campbell, and A.~Doucet}, {\em Diffusion {S}chr{\"o}dinger bridge matching}, Advances in Neural Information Processing Systems, 36 (2024).

\bibitem{slepian1972symmetrized}
{\sc D.~Slepian}, {\em On the symmetrized kronecker power of a matrix and extensions of {M}ehler’s formula for hermite polynomials}, SIAM Journal on Mathematical Analysis, 3 (1972), pp.~606--616.

\bibitem{song2020score}
{\sc Y.~Song, J.~Sohl-Dickstein, D.~P. Kingma, A.~Kumar, S.~Ermon, and B.~Poole}, {\em Score-based generative modeling through stochastic differential equations}, arXiv preprint arXiv:2011.13456,  (2020).

\bibitem{teter2023contraction}
{\sc A.~M. Teter, Y.~Chen, and A.~Halder}, {\em On the contraction coefficient of the {S}chr{\"o}dinger bridge for stochastic linear systems}, IEEE Control Systems Letters, 7 (2023), pp.~3325--3330.

\bibitem{teter2024solution}
{\sc A.~M. Teter, I.~Nodozi, and A.~Halder}, {\em Solution of the probabilistic {L}ambert problem: Connections with optimal mass transport, {S}chr\"{o}dinger bridge and reaction-diffusion {PDE}s}, arXiv preprint arXiv:2401.07961,  (2024).

\bibitem{vargas2023bayesian}
{\sc F.~Vargas, A.~Ovsianas, D.~Fernandes, M.~Girolami, N.~D. Lawrence, and N.~N{\"u}sken}, {\em Bayesian learning via neural {S}chr{\"o}dinger--{F}{\"o}llmer flows}, Statistics and Computing, 33 (2023), p.~3.

\bibitem{vargas2024transport}
{\sc F.~Vargas, S.~Padhy, D.~Blessing, and N.~N{\"u}sken}, {\em Transport meets variational inference: Controlled {M}onte {C}arlo diffusions}, in The Twelfth International Conference on Learning Representations, 2024.

\bibitem{wakolbinger1990schrodinger}
{\sc A.~Wakolbinger}, {\em Schr{\"o}dinger bridges from 1931 to 1991}, in Proc. of the 4th Latin American Congress in Probability and Mathematical Statistics, Mexico City, 1990, pp.~61--79.

\bibitem{wang2021deep}
{\sc G.~Wang, Y.~Jiao, Q.~Xu, Y.~Wang, and C.~Yang}, {\em Deep generative learning via {S}chr{\"o}dinger bridge}, in International conference on machine learning, PMLR, 2021, pp.~10794--10804.

\bibitem{zee2010quantum}
{\sc A.~Zee}, {\em Quantum field theory in a nutshell}, vol.~7, Princeton university press, 2010.

\bibitem{zhou2023denoising}
{\sc L.~Zhou, A.~Lou, S.~Khanna, and S.~Ermon}, {\em Denoising diffusion bridge models}, arXiv preprint arXiv:2309.16948,  (2023).

\bibitem{zinn2021quantum}
{\sc J.~Zinn-Justin}, {\em Quantum field theory and critical phenomena}, vol.~171, Oxford university press, 2021.

\end{thebibliography}
\end{document}